\documentclass[reqno,11pt]{amsart}
\usepackage{amssymb, amsmath,latexsym,amsfonts,amsbsy, amsthm,mathtools}
\usepackage{rotating}
\usepackage{hyperref}
\usepackage{tikz}
\usepackage{calc}
\usepackage{cases}
\usepackage{tikz-cd}
\usepackage{mathabx}
\usepackage{tikz-3dplot}
\usetikzlibrary{calc}
\usetikzlibrary{decorations.markings,arrows}
\usetikzlibrary{shapes.geometric}
\usepackage{color}
\usepackage{extarrows}
\usepackage{graphicx}

\allowdisplaybreaks[4]
\let\pa=\partial

\let\f=\frac

\let\pa=\partial

\def\f{\frac}

\def\eqdefa{\buildrel\hbox{\footnotesize def}\over =}

\newcommand{\beq}{\begin{equation}}
	\newcommand{\eeq}{\end{equation}}
\newcommand{\bali}{\begin{aligned}}
	\newcommand{\eali}{\end{aligned}}
\newcommand{\ben}{\begin{eqnarray}}
	\newcommand{\een}{\end{eqnarray}}
\newcommand{\beno}{\begin{eqnarray*}}
	\newcommand{\eeno}{\end{eqnarray*}}

\makeatletter

\newcommand{\Rmnum}[1]{\expandafter\@slowromancap\romannumeral #1@}
\makeatother

\newtheorem{Def}{Definition}[section]
\newtheorem{thm}{Theorem}[section]
\newtheorem{lem}{Lemma}[section]
\newtheorem{rmk}{Remark}[section]

\newtheorem{prop}{Proposition}[section]

\newcommand{\uu}{\mathbf{u}}
\newcommand{\vv}{\mathbf{v}}

\newcommand{\CF}{\mathcal{F}}
\newcommand{\CG}{\mathcal{G}}

\newcommand{\CH}{\mathcal{H}}

\newcommand{\CL}{\mathcal{L}}

\newcommand{\BT}{\mathbb{T}}

\newcommand{\BR}{\mathbb{R}}
\newcommand{\BC}{\mathbb{C}}
\newcommand{\BN}{\mathbb{N}}

\newcommand{\dx}{{\rm d} {x}}

\newcommand{\dy}{{\rm d} {y}}
\newcommand{\dz}{{\rm d} {z}}
\newcommand{\dv}{{\rm d} {v}}

\DeclareMathOperator{\Span}{span}
\title[Traveling waves for the inhomogeneous Euler equations]{Traveling waves near shear flows for the inhomogeneous Euler equations with non-constant density}

\author[Q. Zhao]{Qi Zhao}
\address[Q. Zhao]{Department of Mathematics, New York University Abu Dhabi, Saadiyat Island, P.O. Box 129188, Abu Dhabi, United Arab Emirates.}
\email{qz2875@nyu.edu}

\author[W. Zhao]{Weiren Zhao}
\address[W. Zhao]{Department of Mathematics, New York University Abu Dhabi, Saadiyat Island, P.O. Box 129188, Abu Dhabi, United Arab Emirates.}
\email{zjzjzwr@126.com, wz19@nyu.edu}

\begin{document}
	
	\begin{abstract}
		We investigate the existence and nonexistence of traveling wave solutions near monotonic shear flows with non-constant background density for the two-dimensional inhomogeneous Euler equations in a finite channel. For any small $\tau>0$, first, we construct nontrivial traveling waves with velocity and density in $H^{5/2-\tau}$ and $H^{3/2-\tau}$, respectively, showing that inviscid damping fails at these regularities. Second, when the distorted Rayleigh operator has no eigenvalues, we prove that such traveling wave solutions cannot exist in higher regularity spaces ($H^{5/2+\tau}$ for velocity and $H^{3/2+\tau}$ for density).
	\end{abstract}
	
	\maketitle

	\section{Introduction}

	In this paper, we study the two-dimensional inhomogeneous incompressible Euler system in a finite channel $(x,y)\in \Omega=\BT_{2\pi} \times [-1, 1]$:
	\beq\label{incompressible Euler system}
	\left\{\bali
	\pa_t \rho + \uu \cdot \nabla \rho &= 0,\\
	\rho(\pa_t \uu + \uu \cdot \nabla \uu) + \nabla P &= 0,\\
	\nabla \cdot \uu &= 0,\\
	v(t,x,\pm 1)&=0. 
	\eali\right.\eeq
	Here, $\rho(t,x,y) > 0$ is the density, $\uu = \big(u(t,x,y) ,v(t,x,y)\big)$ is the velocity field. Although when \(\rho \equiv 1\), \eqref{incompressible Euler system} reduces to the homogeneous Euler equation, which is globally well-posed, the global well-posedness of the above system remains widely open. We refer to \cite{bae2020blow, danchin2010well, danchin2011well, CY2019Onsager, W2023Onsager, fanelli2025geometric} and references therein for some local well-posedness results and the blow-up criterion.

	In the infinite periodic channel \(\Omega = \mathbb{T}_{2\pi} \times \mathbb{R}\), a nontrivial global solution was obtained near Couette flow \((y,0)\) and a constant background density \cite{chen2025nonlinear}. Since the velocity field is unbounded, the global solution in \cite{chen2025nonlinear} does not satisfy the energy conservation law,
	\begin{align}\label{eq:energy-con}
		\frac{d}{dt}\int_{\Omega} \rho|\uu|^2(t,x,y)dxdy=0.
	\end{align}
	In \cite{zhao2025inviscid}, the second author of this paper obtained a nontrivial global solution near a general monotonic shear flow \((u(y),0)\) with a more general positive background density \(
	\rho(y)\) in the finite periodic channel considered here. The global solution in \cite{zhao2025inviscid} satisfies \eqref{eq:energy-con}. Both results exploit the mixing effect of the shear flow and establish an inviscid damping estimate for the velocity perturbation. In both papers, careful study of nonlinear interactions leads to a requirement of Gevrey regularity for the perturbation. 
	
	In 1970, Orr \cite{Orr1907} observed that near Couette flow, the perturbed velocity approaches a shear flow as time goes to infinity for an ideal fluid governed by the Euler equations. This phenomenon, known as inviscid damping, was later rigorously proved by Bedrossian and Masmoudi \cite{BM2015} (see also \cite{IonescuJia2020cmp}). A negative result in this direction was established by Lin and Zeng \cite{LinZeng2011}, who constructed a steady solution near Couette flow in the Sobolev spaces $H^{5/2-}$ (see also \cite{castro2023traveling}).
	
	In 1960, Case \cite{Case1960} predicted that inviscid damping holds for stable monotonic shear flows. We refer to \cite{Jia2020arma, Jia2020siam, WeiZhangZhao2018, zillinger2017linear} for recent linear results. A mathematically rigorous proof for the nonlinear system was subsequently provided by Ionescu and Jia \cite{IJ2020} and Masmoudi and Zhao \cite{MasmoudiZhao2020}. A negative result similar to that of \cite{LinZeng2011} was obtained in \cite{sinambela2023transition} for certain monotonic shear flows. Regarding non-monotonic shear flows, Bouchet and Morita \cite{BM2010} first predicted the linear inviscid damping and the vorticity depletion, which was later proved in \cite{WeiZhangZhao2019, WeiZhangZhao2020} for symmetric shear flows and Kolmogorov flow \cite{WeiZhangZhao2020}. Recently, Ionescu, Iyer, and Jia \cite{ionescuSameerJia2022linear} established linear inviscid damping for a class of stable non-monotonic shear flows with one non-degenerate critical point. Conversely, a negative result was presented in \cite{CTW2023Stationary}, which constructs stationary structures near Kolmogorov and Poiseuille flows.
	
	For the inhomogeneous, incompressible Euler equation \eqref{incompressible Euler system}, due to the influence of density---especially with a non-constant density profile---proving both positive and negative results is challenging. The objectives of this paper are threefold:
	\begin{itemize}
		\item[1.] To establish the existence of a non-trivial global solution to the system \eqref{incompressible Euler system}.
		\item[2.] To construct a traveling wave solution in the vicinity of any monotonic shear flow with a positive background density. This result serves as an analog to that of \cite{LinZeng2011}.
		\item[3.] To demonstrate that when the distorted Rayleigh operator $u(y) \text{Id} - ({u'}\rho)'(y) \tilde{\Delta}^{-1}$ has no eigenvalues or embedded eigenvalues, such traveling wave solutions cannot exist if the perturbations are sufficiently smooth. Here $(u(y),0)$ is the monotonic shear flow, and $\rho(y)$ is the positive background density. 
	\end{itemize}

	\subsection{Main results}
	In this paper, we construct traveling wave solutions near the general monotonic shear flow \((u(y),0)\) with a more general nonnegative density \(\rho(y)\). More precisely, we are looking for solutions near a steady state 
	\beq\label{expression of shear flow}
	\uu = (u(y), 0), \quad \rho = \rho(y), \quad P = \text{cons.}.
	\eeq
	and of the form
	\begin{align}\label{def: traveling-wave}
		\uu(t,x,y) = \uu_s(x-ct,y),\quad \rho(t,x,y)=\rho_s(x-ct,y).
	\end{align}

	Our first result states as follows:
	
	\begin{thm}\label{Thm: travel-wave}
		Let $u, \rho \in C^3(-1, 1)$ satisfy
		$$u(0) = 0, \quad u'(y),\ \rho(y) > c_0, \text{ for some }c_0>0 \text{ and all }y \in (-1,1).$$
		For any $\tau > 0$, there exists $\epsilon_0 >0$ such that for all $0 <\epsilon < \epsilon_0$, there is a steady non-sheared flow $\uu_\epsilon(x, y)$ and $ \rho_\epsilon (x, y)$ (with $c = 0$) to Euler equations \eqref{incompressible Euler system} and 
		\beq\bali\nonumber
		\| \uu_{\epsilon} - (u(y),0)\|_{H^{\f{5}{2} - \tau}(\BT_{2\pi} \times (-1,1))} + \left\|\rho_{\epsilon} - \rho(y)\right\|_{H^{\f{3}{2} - \tau}(\BT_{2\pi} \times (-1,1))} \leq \epsilon.
		\eali\eeq
	\end{thm}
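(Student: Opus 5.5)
We follow the Lin--Zeng strategy and adapt it to variable density. For steady flows of \eqref{incompressible Euler system} we take a stream function $\psi$ with $\uu=(\psi_y,-\psi_x)$. Steady transport $\uu\cdot\nabla\rho=0$ forces $\rho$ to be constant on streamlines, so we look for $\rho=g(\psi)$. The momentum equation then reduces to a Long--Yih/Dubreil-Jacotin type elliptic equation
\[
g(\psi)\Delta\psi+\tfrac12 g'(\psi)|\nabla\psi|^2=F(\psi),
\]
or equivalently $\nabla\cdot(g(\psi)\nabla\psi)-\tfrac12 g'(\psi)|\nabla\psi|^2=F(\psi)$. The boundary conditions are $\psi(x,\pm1)=\text{const}$, so that $v=0$.

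\textbf{Step 1: background profile.} We choose the base stream function $\psi_0(y)=\int_0^y u$. It is strictly increasing because $u'>c_0$ and $u(0)=0$ (so $u$ changes sign at $y=0$, and $\psi_0$ has a minimum there). We then define $g$ and $F$ so that $\psi_0$ solves the equation, i.e.\ $g(\psi_0(y))=\rho(y)$ and $F(\psi_0(y))=(\rho u)'(y)-\tfrac12\rho'(y)u(y)^2\cdot$(appropriate factor). Near $y=0$ we have $\psi_0\approx u'(0)y^2/2$, so $\psi_0$ is not invertible globally. We therefore define $g$ and $F$ separately on $y>0$ and $y<0$, as in Lin--Zeng. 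The key point, as in the homogeneous case, is to modify the shear slightly.

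\textbf{Step 2: modified shear.} We take $u_\epsilon$ close to $u$ in $H^{5/2-\tau}$ such that $u_\epsilon$ has a degenerate critical structure at $0$ (for instance $u_\epsilon$ with $u_\epsilon(y)\sim y|y|$ or with a small flat piece near $y=0$). This is possible at cost $\epsilon$ precisely because the correction is localized on a scale $\delta$ and has $H^{5/2-\tau}$ norm $\sim\delta^{\tau}$. The corresponding modified density $\rho_\epsilon$ is close to $\rho$ in $H^{3/2-\tau}$. The purpose is to make the linearized operator $-\Delta + \text{(potential)}$ of the elliptic equation around the modified shear have a kernel element $\phi(y)\cos x$. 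For the density case the potential is roughly $\frac{(\rho u_\epsilon')'}{\rho(u_\epsilon-c)}$ in the distorted Rayleigh form, and we need the ODE
\[
-(\rho\phi')'+\rho\phi+\frac{(\rho u_\epsilon')'}{u_\epsilon}\phi=0,\quad \phi(\pm1)=0
\]
to have a nontrivial solution. We tune it with a one-parameter family, using an intermediate value/continuity argument in the parameter, so that the first eigenvalue crosses $0$.

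\textbf{Step 3: bifurcation.} Around the modified steady state we apply Crandall--Rabinowitz local bifurcation in $x$-even, $2\pi$-periodic functions. The kernel is one-dimensional (spanned by $\phi(y)\cos x$), and transversality comes from the parameter family. This yields steady non-shear solutions $\psi=\psi_\epsilon+s\phi\cos x+o(s)$ in $H^{5/2-\tau}$-type spaces, with $\rho=g(\psi)$ in $H^{3/2-\tau}$. Taking $s$ small relative to $\epsilon$ gives the estimate.

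\textbf{Main obstacle.} The hardest step is the regularity of $g$ and $F$ near the critical level $\psi_0(0)$ where the streamline map degenerates. We must make the nonlinear map $C^1$ between H\"older spaces so that the bifurcation theorem applies, while keeping the final norms only in $H^{5/2-\tau}$ and $H^{3/2-\tau}$. We would work in $C^{2,\alpha}$ with the modification chosen so that $F$ and $g$ remain $C^1$. The smallness in the weaker Sobolev norm is then tracked separately through the scale $\delta$.
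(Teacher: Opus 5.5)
\textbf{Gap 1: no working mechanism for the kernel.} Step~2 does not supply a mechanism that produces the kernel element $\phi(y)\cos x$, and the one you suggest cannot work. For a general monotone profile the operator $-(\rho\phi')'+\rho\phi+\frac{(\rho u')'}{u}\phi$ has trivial kernel; for Couette flow with constant density it is $-\phi''+\phi>0$. So you must create a bound state with a perturbation that is small in $H^{5/2-\tau}$ yet moves the spectrum by $O(1)$.

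Flattening $u$ at the critical point fails on both counts.
\begin{itemize}
\item \emph{It is not small.} Since $H^{5/2-\tau}(-1,1)\hookrightarrow C^1$ for $\tau<1$, no small perturbation can turn $u'(0)>c_0$ into $u_\epsilon'(0)=0$ or a flat piece. Your correction has amplitude $\sim\delta$ on scale $\delta$, so its norm is $\sim\delta^{\tau-1}$, not $\delta^{\tau}$.
\item \emph{It is not a well.} For $u_\epsilon\sim y|y|$ the potential $(\rho u_\epsilon')'/u_\epsilon\approx 2\rho(0)/y^2$ is a repulsive singularity that forces $\phi(0)=0$.
\end{itemize}

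The missing idea (Lin--Zeng, adapted in the paper) is to exploit the $1/u$ singularity at the critical layer. Take $u^\delta_{m,\gamma}=u^\delta+m\gamma\int_0^y\theta^\delta\Gamma(y'/\gamma)\,dy'$. This changes $u'$ only by $O(m\gamma)$ and $u''$ by an $O(1)$ bump of width $\gamma$, at cost $\lesssim|m|\gamma^\tau$. Yet the potential of $\CH$ acquires $\frac{m\theta(0)\Gamma'(y/\gamma)}{u'(0)y}$, which tends to $-\frac{\theta(0)}{u'(0)}m\,\delta_0$ because $\int\Gamma'(s)/s\,ds=-1$; the same holds for $\theta^\delta_{l,\eta}$. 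The result is a delta well of arbitrary strength $\Lambda_{m,l}$ at vanishing cost.

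Your ``intermediate value/continuity argument'' also needs spectral input that you do not provide:
\begin{itemize}
\item For $\CH_{u,\theta}-\Lambda\delta_0$, the value $-1$ is an eigenvalue for exactly one $\Lambda$, and $\Lambda(\lambda)$ is strictly decreasing (Lemma~\ref{Lemma: Lambda(lambda)}, via the Frobenius solution $u\phi_1$).
\item The lowest eigenvalue at positive $\gamma,\eta$ is within $C(|m|\gamma+|l|\eta)$ of the limiting one (Lemma~\ref{lemma for smallness of gamma and eta}).
\item It is strictly decreasing in $m$ and $l$ (the lemma that follows).
\end{itemize}
Only then do you get $m_1,m_2$ with $\lambda_{m_1,\gamma,l,\eta}<-1<\lambda_{m_2,\gamma,l,\eta}$.

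\textbf{Gap 2: regularity at the critical level is not resolved.} Your ``main obstacle'' paragraph locates the difficulty at the critical level but does not resolve it, and flattening makes it worse. With $u_\epsilon\sim y|y|$ one has $\psi_0\sim|y|^3$, so $g$ and $F$ involve $\psi^{1/3}$ and are not $C^1$. Defining $g,F$ separately for $y\gtrless0$ is not enough: the two branches must agree, and be smooth, near the critical level. These are exactly the compatibility conditions \eqref{condition 1 from G}--\eqref{condition 2 from G and F}.

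The paper enforces them by a separate preparatory modification $u^\delta=\chi_\delta u'(0)y+(1-\chi_\delta)u$, $\theta^\delta=\chi_\delta\theta(0)+(1-\chi_\delta)\theta$. Here the change in $u$ has amplitude $O(\delta^2)$, so the cost is only $O(\delta^\tau)$. The profile is then locally symmetric ($\Psi=-u'(0)y^2/2$ and $\theta$ constant near $0$), so $G_\pm$ and $L_\pm$ coincide and are smooth near $\Psi=0$.

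\textbf{Step 3.} With these two ingredients in place, your bifurcation step is a legitimate variant. The paper bifurcates in the wavenumber $k$, with transversality from $2k_0\partial_\zeta^2\psi_0=-2k_0\psi_0\notin\mathrm{R}(\CL)$, and then fixes $k=1$ by the intermediate value theorem in $m$. Bifurcating in $m$ at $k=1$ instead requires $\partial_m\lambda_{m,\gamma,l,\eta}\neq0$ (Hellmann--Feynman), which the quantitative monotonicity lemma essentially supplies.

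Minor: $\psi_0$ is not strictly increasing; it has a minimum at $y=0$, as you note a sentence later.
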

	Some remarks are listed in order: 
	\begin{itemize}
		\item 
		Unlike in \cite{sinambela2023transition}, we do not impose any structural assumptions on the shear flow other than monotonicity.
		
		\item    The assumption $u(0) = 0$ is to ensure the existence of steady solution, namely, $c=0$ in \eqref{def: traveling-wave}. Without such an assumption, one can follow the same proof and obtain the traveling wave solution \eqref{def: traveling-wave} with $c=u(0)$. 
		
		\item    The above theorem immediately implies that nonlinear inviscid damping
		is not true in any velocity space $H^{\f{5}{2} - \tau}$  or density space $H^{\f{3}{2} - \tau}$ neighborhood of monotonic shear flow and background density. 
	\end{itemize}

	We then show that the value $(\f{5}{2},\f32)$ is somehow optimal. Actually, our next theorem shows that there exists no nontrivial steady traveling flows in the velocity space $H^{\f{5}{2} + \tau}$  and density space $H^{\f{3}{2} + \tau}$ neighborhood of a monotonic flow.

	\begin{thm} \label{Theorem of nonlinear stability}
		Let $u, \rho \in C^3(-1, 1)$ be such that $u'(y), \rho(y) > c_0$ for some constant $c_0 > 0$. Let $\tilde{\Delta} = \rho \partial_x^2 + \partial_y \left( \rho \partial_y (\cdot) \right)$ be the distorted Laplace operator and assume that the distorted Rayleigh operator 
        $$u(y) \operatorname{Id} - (u'\rho)'(y) \tilde{\Delta}^{-1}$$ 
        has no eigenvalues or embedded eigenvalues.  For any $\tau >0$, there exists $\epsilon_0 >0$ such that any traveling wave solution $\uu_s(x-ct,y),  \rho_s(x-ct,y)$ to Euler equations \eqref{incompressible Euler system} satisfying
		\beq\bali\nonumber
		\| \uu_s - (u(y),0)\|_{H^{\f{5}{2} + \tau}(\BT_{2\pi} \times (-1,1))} + \left\| \rho_s - \rho(y) \right\|_{H^{\f{3}{2} + \tau}(\BT_{2\pi} \times (-1,1))} \leq \epsilon_0,
		\eali\eeq
		must be a shear flow.
	\end{thm}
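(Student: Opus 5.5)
We argue by contradiction, following Lin--Zeng \cite{LinZeng2011}. Suppose there are traveling waves $(\uu_n,\rho_n)$ with speeds $c_n$ that are not shear flows and satisfy
\[
\|\uu_n-(u,0)\|_{H^{5/2+\tau}}+\|\rho_n-\rho\|_{H^{3/2+\tau}}=\epsilon_n\to0 .
\]
Sobolev embedding gives $\uu_n\to(u,0)$ in $C^{1+\tau'}$ and $\rho_n\to\rho$ in $C^{\tau'}$. Since $v_n(\cdot,\pm1)=0$ and $u_n$ is close to $u$ in $C^1$, the speed $c_n$ must lie in a small neighbourhood of $\mathrm{Ran}\,u=[u(-1),u(1)]$; otherwise the linearized operator is invertible and the perturbation is forced to vanish. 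Pass to a subsequence with $c_n\to c_\ast$.

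\textbf{Step 1: reformulation in the moving frame.} Set $X=x-c_nt$. The steady equations become $(\uu_n-(c_n,0))\cdot\nabla\rho_n=0$ and $(\uu_n-(c_n,0))\cdot\nabla\omega_n=\text{(baroclinic term)}$. To keep the structure exact, write $\rho_n\uu_n=\nabla^\perp\psi_n$ for a modified stream function, so that $\tilde\Delta$ appears naturally. Both $\rho_n$ and the generalized vorticity $\nabla\cdot(\rho_n\nabla\psi_n)$, corrected by the Bernoulli-type term, are transported by $\uu_n-(c_n,0)$. Since $u'>c_0$, the relative stream function $\psi_n-c_n y$ has no critical points for large $n$. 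Its level sets are therefore graphs over $\BT$, and we may write $\rho_n=R_n(\psi_n-c_ny)$ and $\tilde\Delta_n\psi_n=F_n(\psi_n-c_ny)$ for functions $R_n,F_n$ of one variable. The regularity $H^{5/2+\tau}$ for the velocity, equivalently vorticity in $H^{3/2+\tau}\subset C^{1/2+}$-type control along streamlines, is what makes $F_n$ Lipschitz with uniform bounds. This is the point where $5/2$ is sharp.

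\textbf{Step 2: linear limit.} Decompose $\psi_n=\psi_0+\phi_n$, where $\psi_0$ is the shear part (the $x$-average), and consider the non-zero Fourier modes $\tilde\phi_n=\phi_n-\langle\phi_n\rangle_x\neq0$. Subtracting the $x$-average of the relation in Step~1 gives
\[
\tilde\Delta\tilde\phi_n-\frac{(u'\rho)'}{u-c_n}\,\tilde\phi_n=o(1)\,\|\tilde\phi_n\|_{H^{2}}\quad\text{near the non-critical region}.
\]
This uses that $F_n'\approx (u'\rho)'/(u-c_n)$ along streamlines, which is the identity $F_n'(\psi_0-c_ny)=(\tilde\Delta\psi_0)'/(u-c_n)$. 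The density contributions produce error terms controlled by $\|\rho_n-\rho\|_{C^{\tau'}}$. Normalize $\hat\phi_n=\tilde\phi_n/\|\tilde\phi_n\|_{H^2}$ and extract a weak limit $\hat\phi$. The limit satisfies the distorted Rayleigh equation $(u-c_\ast)\tilde\Delta\hat\phi-(u'\rho)'\hat\phi=0$ with Dirichlet boundary conditions. It is therefore an eigenfunction, or an embedded eigenfunction if $c_\ast\in\mathrm{Ran}\,u$, of $u\,\mathrm{Id}-(u'\rho)'\tilde\Delta^{-1}$. By hypothesis this forces $\hat\phi=0$.

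\textbf{Step 3 (main obstacle): ruling out loss of compactness.} We must show $\hat\phi\neq0$, i.e.\ that the normalized sequence does not lose mass. The key issue is the critical layer $u=c_n$, where the coefficient $1/(u-c_n)$ blows up. I would write the $x$-Fourier modes $k\neq0$ and work with the equation in the product form $(u-c_n)\tilde\Delta\hat\phi_n-(u'\rho)'\hat\phi_n=(u-c_n)\,o(1)$. This form has bounded coefficients, so elliptic estimates away from the critical layer, combined with a uniform $H^{1}$ bound and the compact embedding $H^2\subset H^1$, give strong convergence in $H^1$. The delicate part is excluding concentration near $y_c=u^{-1}(c_n)$. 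For this I would use the uniform Lipschitz bound on $F_n$ from Step~1, which is exactly where $\tau>0$ enters. It controls $\tilde\Delta\hat\phi_n$ in $L^2$ near the critical layer without dividing by $u-c_n$, yielding $H^2$ compactness. Hence $\|\hat\phi\|_{H^2}=1$, contradicting Step~2, so the traveling wave must be a shear flow for $n$ large.
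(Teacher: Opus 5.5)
Your contradiction skeleton (normalize, pass to a limit, invoke the spectral hypothesis, rule out loss of mass) is the same one used in the paper's proof of Lemma~\ref{resolvent estimate}. The gap is at the only nontrivial step, non-concentration at the critical layer, which rests on claims that do not hold.

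First, in Step~1, $\psi_n-c_ny$ does have critical points whenever $c_n$ lies inside the range of $u_n$. On the critical curve $y=y_c(x)$ where $u_n=c_n$, its $y$-derivative vanishes. So $x\mapsto(\psi_n-c_ny)(x,y_c(x))$ is periodic, with derivative equal (up to sign and a factor $\rho_n$) to $v_n(x,y_c(x))$, and this derivative must vanish at the extrema. These stagnation points carry closed streamlines, i.e.\ cat's eyes, which are exactly the structures built in Theorem~\ref{Thm: travel-wave}. Hence $\rho_n$ and the generalized vorticity are at best piecewise functions of the relative stream function. Also, $\rho_n\uu_n$ is not divergence free when $c_n\neq0$; you would need $\rho_n(\uu_n-c_ne_1)$.

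Second, and more seriously, the uniform Lipschitz bound on $F_n$ used in Step~3 does not follow from the hypotheses. The vorticity is only $C^{1/2+\tau}$, and $F_n'$ is a $y$-derivative of vorticity divided by $u_n-c_n$. The bound is even inconsistent with your own Step~2: if $F_n'\approx(u'\rho)'/(u-c_n)$, then $F_n'$ is unbounded near $y_c$ unless $(u'\rho)'(y_c)=0$, which is not assumed. (In Section~2 the profiles have to be modified near the critical point precisely so that $G$ and $L$ are regular there.) So $H^2$-compactness of $\hat\phi_n$ near the critical layer is not established, and the argument does not close.

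The paper avoids functional relations altogether. It works with $v_s$, which vanishes at $y=\pm1$ and satisfies the exact identity \eqref{eq0.2}. This is rewritten as $(u_s-c)\nabla\cdot(\rho_s\nabla v_s)-(u'\rho)'v_s=v_sf$ with $f=\nabla\cdot(\rho_s\nabla u_s)-(u'\rho)'$, which is of size $\epsilon_0$ in $H^{1/2+\tau}$. The key input is the uniform-in-$c$ bound $\|\nabla v\|_{L^2}\le C\|F\|_{H^{1/2+\tau}}$.

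The compactness proof of that bound excludes concentration in the strip $A_\delta=\{|u-c_\infty|\le\delta\}$ by duality, using test functions $\phi$ supported in $A_\delta$:
\begin{itemize}
\item the term $\int v_k(u'\rho)'\phi/(u_k-c_k)$ is integrated by parts against $\ln(u_k-c_k)$, gaining $\delta^{1/2}$;
\item the term $\int F_k\phi/(u_k-c_k)$ is controlled through the H\"older continuity in $y$ of $F_k\phi$, gaining $\delta^{\tau/2}|\ln\delta|$.
\end{itemize}
This second estimate is where the $H^{5/2+\tau}$ velocity (hence $f\in H^{1/2+\tau}$) is used. A product estimate $\|vf\|_{H^{1/2+}}\lesssim\|v\|_{H^1}\|f\|_{H^{1/2+\tau}}$ then gives $\|\nabla v_s\|_{L^2}\le C\epsilon_0\|\nabla v_s\|_{L^2}$, so $v_s\equiv0$.

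If you want to keep your nonlinear contradiction scheme, normalize $\nabla v_n$ (equivalently $\tilde\phi_n$ in $H^2$) and replace the Lipschitz claim with this critical-strip estimate.
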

	
	\begin{rmk}
		We note that while this absence of traveling waves is necessary for inviscid damping, it does not guarantee nonlinear asymptotic stability---which currently requires Gevrey regularity \cite{zhao2025inviscid}. It remains open even for the global existence when the perturbations are in any Sobolev spaces. 
		
		For lower regularity, Arnold's type stability is also open for inhomogeneous Euler equations \cite{abarbanel1986nonlinear}, which was well-studied for the homogeneous Euler equations \cite{T2010Variants, arnold2013conditions, GS2024Arnold, wang2025nonlinear}.  
	\end{rmk}

	\subsection{Idea of proof}\label{sebsection of idea of proof}
	
	By introducing $\theta_s = \rho_s^{-1}$, the vorticity $\omega_s = \nabla \times \mathbf{u}_s$, and the stream function $\Psi_s$, defined by
	$$u_s = - \frac{\partial \Psi_s}{\partial y}, \quad v_s =  \frac{\partial \Psi_s}{\partial x},$$
	the equations for the traveling wave solution $(\mathbf{u}_s, \theta_s)$ with speed $c = 0$ become:
	\beq\label{incompressible Euler system: change 1}
	\left\{\bali
	\nabla^\bot \Psi_s \cdot \nabla \theta_s&= 0,\\
	\vv_s \cdot \nabla \vv_s + \theta_s \nabla P &= 0.
	\eali\right.\eeq
	From \eqref{incompressible Euler system: change 1}, there exists a function $G$ such that
	\beq\label{expression of G}
	\theta_s = G(\Psi_s).
	\eeq
	Substituting this back into \eqref{incompressible Euler system: change 1}, we obtain the equivalent form
	\beq\nonumber
	\nabla^\bot \Psi_s \cdot \left(\omega_s - \f{G'(\Psi_s)}{2G(\Psi_s)} |\nabla \Psi_s|^2\right) =0.
	\eeq
	Thus, there exists a function $L$ such that
	\beq\label{Long's equation}
	\Delta \Psi_s - \f{G'(\Psi_s)}{2G(\Psi_s)} |\nabla \Psi_s|^2 = L(\Psi_s),
	\eeq
	which is also called the Long's equation \cite{abarbanel1986nonlinear, long1953some, yih2012stratified}. It is straightforward to verify that the shear flow \eqref{expression of shear flow} is a solution to \eqref{Long's equation}, where the associated functions $G$ and $L$ are determined by \eqref{expression of G} and \eqref{Long's equation} as
	\beq\bali\label{expression of origion G and L}
	G(\Psi) = \theta(y)\eqdefa \frac{1}{\rho(y)}, \quad L(\Psi) =  - u' + \f{ \theta'}{2 \theta} u,
	\eali\eeq
	Inspired by \cite{LinZeng2011, sinambela2023transition, WangZhangZhu2020}, our approach to generating a non-trivial branch of steady solutions relies on the Crandall-Rabinowitz local bifurcation theorem \cite{CR1971}. The structure of these solutions often resembles the Kelvin-Helmholtz instability patterns. 
	
	The main difficulties concern the well-posedness of Long's equation \eqref{Long's equation}, specifically the regularity of the functions $G$ and $L$ defined in \eqref{expression of origion G and L}, as well as the characterization of the kernel of the linearized operator associated with \eqref{Long's equation}, which is crucial for applying the bifurcation theorem. Note that the stream function $\Psi$ is monotonic on both $(-1,0)$ and $(0,1)$, which necessitates defining $G$ and $L$ piecewise on these intervals. To ensure consistency at $y = 0$, there must exists $\delta > 0$ such that
	$$G(\Psi \big|_{- \delta < y < 0}) = \theta \big|_{- \delta < y < 0}, \quad G(\Psi \big|_{ 0 < y < \delta}) = \theta \big|_{ 0 < y < \delta }.$$
	This implies that $\theta$ in a negative neighborhood of $y = 0$ is uniquely determined by $\Psi$ and $\theta$ in the positive neighborhood:
	\beq\label{condition 1 from G}
	\theta \big|_{- \delta < y < 0} =  \theta \big|_{ 0 < y < \delta } \circ \left( \Psi \big|_{ 0 < y < \delta} \right)^{-1} \circ \Psi \big|_{- \delta < y < 0}.
	\eeq
	Similarly, the well-posedness of $L$ require that
	\beq\label{condition 1 from F}
	\left( - u' + \f{ \theta' }{2 \theta} u \right) \big|_{- \delta < y < 0} =  \left( - u' + \f{ \theta' }{2 \theta} u \right) \big|_{ 0 < y < \delta } \circ \left( \Psi \big|_{ 0 < y < \delta} \right)^{-1} \circ \Psi \big|_{- \delta < y < 0}.
	\eeq
	Furthermore, to ensure the regularity of $G$ and $L$ near $\Psi = 0$, the condition $\Psi'(0) = 0$ implies
	\beq\label{condition 2 from G and F}
	\theta'(0) = 0, \quad \left( - u' + \f{ \theta' }{2 \theta} u \right)'(0) = 0.
	\eeq
	A direct approach to satisfying conditions \eqref{condition 1 from G}--\eqref{condition 2 from G and F} is to introduce the modified profiles
	\beq\bali\label{eq:u,theta^delta}
	u^\delta =& \chi_\delta(y) u'(0)y + \left(1 - \chi_\delta(y)\right) u(y),\\
	\theta^\delta =& \chi_\delta(y) \theta(0) + \left(1 - \chi_\delta(y)\right) \theta(y),
	\eali\eeq
	where $\chi_\delta$ is a smooth cut-off function supported on $(-2\delta, 2\delta)$ with $\chi_\delta \equiv 1$ on $(-\delta, \delta)$.
	
	To study the linearized operator of \eqref{Long's equation}, we compute the Fr\'echet derivative of \eqref{Long's equation} at $\Psi$, which is given by
	\beq\bali\nonumber
	\CG \psi :&=\f{\delta}{\delta \Psi_s} \left(\Delta \Psi_s - \f{G'(\Psi_s)}{2G(\Psi_s)} |\nabla \Psi_s|^2 - L(\Psi_s)\right) \bigg|_\Psi \psi \\
	&= \Delta \psi - \f{G'(\Psi)}{G(\Psi)} \Psi' \pa_y \psi - \left(\f{G'}{2G} \right)'(\Psi) |\Psi'|^2 \psi - L'(\Psi) \psi\\
	&= \theta \tilde{\Delta} \psi - \f{\theta}{u} \left( \f{u'}{\theta} \right)' \psi.
	\eali\eeq
    Notice that $\CG\psi=\frac{1}{u \rho}(u(y)\mathrm{Id}-(u'\rho)'\tilde{\Delta}^{-1})\tilde{\Delta}\psi$. To find the kernel of $\CG$, we need to study the homogeneous distorted Rayleigh equation
	\beq\bali\nonumber
	\theta \f{{\rm d}}{{\rm d}y} \left( \f{1}{\theta} \f{{\rm d}}{{\rm d}y} \hat{\psi} \right) -  \f{\theta}{u} \left( \f{u'}{\theta} \right)' \hat{\psi} - k^2 \hat{\psi} = 0,
    \eali\eeq
	with the Dirichlet boundary condition $\psi(1) = \psi(-1) = 0$. It is equivalent to show that $-k^2$ is an eigenvalue of the following operator
	\beq\nonumber
	\CH_{u, \theta} = - \theta \f{{\rm d}}{{\rm d}y} \left( \f{1}{\theta} \f{{\rm d}}{{\rm d}y} \cdot \right) + \f{ \theta }{ u} \left( \f{ u' }{ \theta } \right)',
	\eeq
	with the Dirichlet boundary. However, for general $(u, \theta)$, the distorted Rayleigh operator may not have any eigenvalue (for example, $(\f{ u' }{ \theta } )'\geq 0$), namely, $\mathrm{Ker(\CG)=\{0\}}$ or $-k^2\notin \sigma(\CH_{u, \theta})$. Thus, to create a designed eigenvalue of $\mathcal{H}_{u, \theta}$, we introduce the perturbed shear profiles are defined as follows:
	\beq\bali\nonumber
	u^\delta_{m, \gamma}(y) :=& u^\delta(y) + m \gamma \int_0^y \theta^\delta(y') \Gamma(\f{y'}{\gamma}) \dy' ,\\
	\theta^\delta_{l, \eta}(y) :=& \theta^\delta(y) + l \eta (u^\delta)'(y) \digamma(\f{y}{\eta}),
	\eali\eeq
	where $u^{\delta}, \theta^{\delta}$ are given in \eqref{eq:u,theta^delta} which ensure a local symmetric property, and the functions $\Gamma$ and $\digamma$ are required to satisfy conditions \eqref{eq: Gamma and digamma's condition}, which create the designed eigenvalue by taking $\eta$ and $\gamma$ sufficiently small. With the modified shear flow and density function, the associated linear operator becomes
	\beq\nonumber
	\CH^\delta_{m, \gamma, l, \eta} = - \theta^\delta_{l,\eta} \f{{\rm d}}{{\rm d}y} \left( \f{1}{\theta^\delta_{l,\eta}} \f{{\rm d}}{{\rm d}y} \cdot \right) + \f{\theta^\delta_{l,\eta}}{u^\delta_{m,\gamma}} \left( \f{(u^\delta_{m,\gamma})'}{\theta^\delta_{l,\eta}} \right)',
	\eeq
	By Sturm-Liouville theory, the operator $\mathcal{H}^\delta_{m, \gamma, l, \eta}$ has at most one negative eigenvalue, which corresponds to the minimum of the associated energy functional given by
	$$E_{m,\gamma,l,\eta}[\psi] =  \int_{-1}^1 \f{1}{\theta^\delta_{l,\eta}} |\f{d}{dy}  \psi|^2 
	+ \f{1}{u^\delta_{m,\gamma}} \left(\f{(u^\delta_{m,\gamma})'}{\theta^\delta_{l,\eta}} \right)' |\psi|^2 \dy. $$
	To avoid the complex influence of the functions $\Gamma$ and $\digamma$, we study the limiting behavior of $E_{m,\gamma,l,\eta}$ as $\gamma, \eta \rightarrow 0$, which can be expressed as
	$$E_{m,l}[\psi] = \int_{-1}^1 \f{1}{\theta^\delta} |\f{d}{dy} \psi|^2 + \f{1}{u^\delta}\left(\f{(u^\delta)'}{\theta^\delta}\right)'|\psi|^2 \dy - \f{\Lambda_{m, l}}{\theta^\delta(0)} |\psi(0)|^2,$$
	corresponding to the operator
	$$\CH^\delta_{m,l} = - \theta^\delta \f{{\rm d}}{{\rm d}y} \left( \f{1}{\theta^\delta} \f{{\rm d}}{{\rm d}y} \cdot \right) + \f{ \theta^\delta }{ u^\delta } \left( \f{ (u^\delta)' }{ \theta^\delta } \right)' - \Lambda_{m, l} \delta(0).$$
	Let $\lambda_{m,\gamma,l,\eta}$ and $\lambda_{m,l}$ denote the smallest eigenvalues of $\mathcal{H}^\delta_{m, \gamma, l, \eta}$ and $\mathcal{H}^\delta_{m,l}$, respectively. As shown in Lemma \ref{Lemma: Lambda(lambda)}, $\lambda_{m,l}$ is strictly decreasing with respect to the parameter $\Lambda_{m, l}$. Furthermore, for sufficiently small $\gamma$ and $\eta$, we obtain
	\beq\nonumber
	|\lambda_{m,l} - \lambda_{m,\gamma,l,\eta}| \leq C |m \gamma| + C|l \eta|,
	\eeq
	and
	\beq\nonumber
	\lambda_{m,\gamma,l,\eta} - \lambda_{\tilde{m},\gamma,\tilde{l},\eta} 
	\leq - c \Lambda(m - \tilde{m}, l - \tilde{l}) + C \left( \gamma |m - \tilde{m}| + \eta |l - \tilde{l}| \right).
	\eeq
	These estimates imply that $\lambda_{m, \gamma, l, \eta}$ is strictly decreasing with respect to $m$ and $l$. This monotonicity allows us to determine parameters $m$ and $l$ for any desired value $\lambda_{m,\gamma,l,\eta} = -k^2$.

	\subsubsection{Nonexistence of the traveling wave solution in high regularity}
	Let $\uu, \theta$ of the form \eqref{def: traveling-wave} be a traveling wave solution to the Euler equations \eqref{incompressible Euler system: change 1}. Then $u_s, v_s$ and $ \theta_s$ satisfy the equations:
	\beq\nonumber
	\left\{ \bali(u_s - c) \pa_x \theta_s + v_s \pa_y \theta_s= 0,\\
	\f{1}{\theta_s} \left((u_s - c) \pa_x \uu_s + v_s \pa_y \uu_s\right) + \nabla P = 0.
	\eali\right.
	\eeq
	Applying the curl to the second equation, and invoking the first one together with the incompressibility condition $\partial_x u_s + \partial_y v_s = 0$, yields
	\beq\label{eq0.2}
	(u_s - c) \nabla \cdot \left( \f{1}{\theta_s} \nabla v_s \right) - v_s \nabla \cdot \left( \f{1}{\theta_s} \nabla u_s \right) = 0,
	\eeq
    which can be rewritten as
    \beq\label{eq: 1}
    (u_s-c_{\epsilon})\tilde{\Delta} v_s-\left( \f{u'}{\theta} \right)' v_s=v_s \nabla \cdot \left( \f{1}{\theta_s} \nabla u_s \right)-\left( \f{u'}{\theta} \right)' v_s+(c-c_{\epsilon})\tilde{\Delta} v_s\eqdefa F. 
    \eeq
    Since the distorted Rayleigh operator has no eigenvalue and no embedded eigenvalue, we expect that the above equation has a solution. Indeed, by a contradiction argument, we prove that
    $$\|\nabla v_s\|_{L^2} \leq C \|F\|_{H^{\f{1}{2} + \tau}}$$
    for some constant $C$ independent of $|c-c_{\epsilon}|$. We refer to Lemma \ref{resolvent estimate} for more details. Note that the right-hand-side of \eqref{eq: 1} is of size $o(\epsilon_0^2)$, which implies $v_{\epsilon}\equiv 0$.

	\section{The existence of cat's eye structure} \label{section 2}
	
	This section is devoted to the proof of Theorem \ref{Thm: travel-wave}. 
	
	\subsection{Perturbation of the velocity and density}\label{Perturbations}
	
	In the first subsection, we introduce the perturbed velocity $u_{m, \gamma}$ and density $\theta_{l,\eta}$, and investigate the spectral properties of the associated perturbed distorted Rayleigh operator. Understanding these properties is crucial for analyzing the generation of cat's eye structures arising from shear flow. The primary objective is to deduce the spectral behavior of the limiting operator as $\gamma$ and $\eta$ tend to zero. As we demonstrate in Lemma \ref{Lemma: Lambda(lambda)}, the minimal eigenvalue depends monotonically on $m$ and $l$.
	
	Let us begin with a general monotonic shear flow $u(y), \theta(y)$. We define the perturbations as follows \footnote{In the proof of Theorem \ref{Thm: travel-wave}, we will introduce a preparatory modification $u^{\delta},\theta^{\delta}$ and replace $u, \theta$. In this subsection and subsection \ref{sec:The convergence of the perturbed}, we state the properties for general $(u, \theta)$ that are not necessarily the same as those in Theorem \ref{Thm: travel-wave}. }:
	\beq\bali\nonumber
	u_{m, \gamma}(y) :=& u(y) + m \gamma \int_0^y \theta(y') \Gamma(\f{y'}{\gamma}) \dy' := u(y) + \tilde{\Gamma}(y),\\
	\theta_{l, \eta}(y) :=& \theta(y) + l \eta u'(y) \digamma(\f{y}{\eta}) := \theta(y) + \tilde{\digamma}(y),
	\eali\eeq
	where $\Gamma$ and $\digamma$ satisfy
	\beq\label{eq: Gamma and digamma's condition}
	\int_\BR \f{\Gamma'(y)}{y} \dy = -1, \quad \int_\BR \f{\digamma'(y)}{y} \dy = 1,
	\eeq
	and there exists a $C>0$ such that, for all $|x| >0$,
	\beq\bali\label{eq: Gamma and digamma's condition2}
	\left| \int_{|x|}^\infty  \f{\Gamma'(y)}{y} \dy + \int_{-\infty}^{-|x|}  \f{\Gamma'(y)}{y} \dy\right| + \left| \int_{|x|}^\infty  \f{\digamma'(y)}{y} \dy + \int_{-\infty}^{-|x|}  \f{\digamma'(y)}{y} \dy\right| \leq \f{C}{|x|}.
	\eali\eeq
	For convenience, we denote the perturbed, distorted Rayleigh operator by
	$$\CH_{m, \gamma, l, \eta} := \CH_{u_{m,\gamma}, \theta_{l,\eta}} = - \theta_{l,\eta} \pa_y \left( \f{1}{\theta_{l,\eta}} \pa_y \cdot \right) + \f{\theta_{l,\eta}}{u_{m,\gamma}} \left( \f{u_{m,\gamma}'}{\theta_{l,\eta}} \right)', $$
	and we define $\CH_{m, l}$ as the limiting operator obtained as $\gamma$ and $\eta$ tend to zero, in the sense that 
	$$\langle \CH_{m, \gamma, l, \eta}\, \psi, \phi \rangle \rightarrow \langle \CH_{m, l}\, \psi, \phi  \rangle, \quad \text{as } \gamma, \eta \rightarrow 0,$$
	for any $\psi, \phi \in H_0^1(-1, 1)$. Hereafter, $\langle \cdot, \cdot \rangle$ denotes the inner product in $L^2(\mathbb{T}_{2\pi} \times (-1, 1))$. Under the condition \eqref{eq: Gamma and digamma's condition}, we obtain the following result:
	
	\begin{lem}
		For all $m, l \in \BR$, we have
		\beq\nonumber
		\CH_{m,l} = \CH_{u, \theta} - \left( \f{u'(0)l}{\theta(0)} + \f{\theta(0)m}{u'(0)} \right) \delta(0).
		\eeq
	\end{lem}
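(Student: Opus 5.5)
We prove the identity in weak form. Fix $\psi,\phi\in H_0^1(-1,1)$ and compute $\langle \CH_{m,\gamma,l,\eta}\psi,\phi\rangle$, where the divergence part is integrated by parts with the weight $1/\theta_{l,\eta}$, as in the energy functional $E_{m,\gamma,l,\eta}$. The kinetic term $\int \theta_{l,\eta}^{-1}\psi'\phi'$ converges to $\int \theta^{-1}\psi'\phi'$ because $\theta_{l,\eta}\to\theta$ uniformly: $\|\tilde\digamma\|_{L^\infty}\le C|l|\eta$ for bounded $\digamma$. All the work is therefore in the potential term
$$V_{m,\gamma,l,\eta}=\f{\theta_{l,\eta}}{u_{m,\gamma}}\Big(\f{u_{m,\gamma}'}{\theta_{l,\eta}}\Big)'.$$
We expand it around $V=\f{\theta}{u}(\f{u'}{\theta})'$ and isolate the pieces that concentrate at $y=0$.

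\textbf{Step 1: isolating the singular pieces.} Since $u_{m,\gamma}'=u'+m\gamma\theta\Gamma(y/\gamma)$, we have $\f{u_{m,\gamma}'}{\theta}=\f{u'}{\theta}+m\gamma\Gamma(y/\gamma)$. Differentiating gives the singular contribution $m\Gamma'(y/\gamma)$. Similarly,
$$\f{u'}{\theta_{l,\eta}}=\f{u'}{\theta}-\f{l\eta u'^2\digamma(y/\eta)}{\theta^2}+O(\eta^2),$$
whose derivative has the singular part $-l\f{u'^2}{\theta^2}\digamma'(y/\eta)$, plus terms that are $O(\eta)$ uniformly. Multiplying by $\theta_{l,\eta}/u_{m,\gamma}$, the leading singular terms are
$$m\f{\theta}{u}\Gamma'(y/\gamma)\qquad\text{and}\qquad -l\f{u'^2}{\theta u}\digamma'(y/\eta).$$
We also use $u_{m,\gamma}(y)=u(y)+O(|m|\gamma|y|)$ and $u(y)=u'(0)y(1+O(y))$, so replacing $u_{m,\gamma}$ by $u$ in the denominators changes the integrals by a relative factor $O(\gamma)$.

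\textbf{Step 2: the main term.} Pair with $\psi\phi$ and substitute $y=\gamma s$:
$$\int m\f{\theta(\gamma s)}{u(\gamma s)}\Gamma'(s)\,\psi\phi(\gamma s)\,\gamma\,ds\;\to\;m\f{\theta(0)}{u'(0)}\psi\phi(0)\int_\BR\f{\Gamma'(s)}{s}ds=-\f{m\theta(0)}{u'(0)}\psi\phi(0).$$
The $\digamma$ term is handled the same way and gives
$$-l\f{u'(0)^2}{\theta(0)u'(0)}\psi\phi(0)\int_\BR\f{\digamma'(s)}{s}ds=-\f{lu'(0)}{\theta(0)}\psi\phi(0),$$
using \eqref{eq: Gamma and digamma's condition}. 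Summing the two yields exactly the $\delta(0)$ coefficient in the statement. The remaining regular terms converge to $\int V\psi\phi$.

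\textbf{Step 3: justifying the limits (main obstacle).} The limits are delicate because $\Gamma'(s)/s$ need only be conditionally integrable at infinity. We split $\psi\phi(\gamma s)=\psi\phi(0)+r(\gamma s)$, where $|r(y)|\le C|y|^{1/2}\|\psi\|_{H^1}\|\phi\|_{H^1}$ by the $H^1$ Sobolev/H\"older bound.
\begin{itemize}
\item For the constant part, the integration range is $|s|\le 1/\gamma$. The tail $\int_{|s|>1/\gamma}\Gamma'(s)/s\,ds$ is $O(\gamma)$ by \eqref{eq: Gamma and digamma's condition2}, and the $O(y)$ corrections in $\theta/u$ contribute $O(\gamma)$ after scaling.
\item For the remainder, we write $\Gamma'(s)/s=-\f{d}{ds}A(s)$, where $A(s)$ is the symmetric tail integral in \eqref{eq: Gamma and digamma's condition2}. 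We integrate by parts, with the symmetric pairing of $\pm s$, moving the derivative onto $r(\gamma s)$. The bound $|A(s)|\le C/|s|$ then controls the result, in the form $\int |A(s)|\,\gamma|(\psi\phi)'(\gamma s)|\,ds$, which tends to $0$ by dominated convergence.
\end{itemize}
Should this integration by parts be problematic in the region near $s=0$, we would instead use directly that $\Gamma,\digamma$ are chosen smooth, with $\Gamma'(s)/s$ integrable near $0$, and handle that region separately.
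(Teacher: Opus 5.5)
Your Steps 1--2 are the computation behind the paper's proof: the paper splits the difference into the first-order term, the $\theta_{l,\eta}$-part and the $u_{m,\gamma}$-part of the potential, and simply asserts the two limits. Your coefficients $-\f{m\theta(0)}{u'(0)}$ and $-\f{l u'(0)}{\theta(0)}$ are right.

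The step that fails is Step 3, the one you single out as the main obstacle. The tail function $A(x)=\int_{|x|}^\infty \f{\Gamma'(y)}{y}\,dy+\int_{-\infty}^{-|x|}\f{\Gamma'(y)}{y}\,dy$ in \eqref{eq: Gamma and digamma's condition2} is even in $x$. For $s>0$ it satisfies $-A'(s)=\f{\Gamma'(s)}{s}+\f{\Gamma'(-s)}{-s}$, so $\Gamma'(s)/s\neq -A'(s)$. Even after pairing $s$ with $-s$, you can integrate by parts only against the even part of the remainder $R(s)=m\,\theta(\gamma s)\f{\gamma s}{u(\gamma s)}\,r(\gamma s)$. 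The odd part of $R$ is paired with the odd part of $\Gamma'(s)/s$, about which \eqref{eq: Gamma and digamma's condition2} says nothing. This odd part is not negligible: if $\psi\phi$ is $C^1$, then to first order $r(y)\approx(\psi\phi)'(0)\,y$, which is odd. Moreover, the ``only conditionally integrable'' scenario contradicts your own constant-part estimate. Bounding the $O(y)$ corrections by $O(\gamma)\int_{|s|\le 1/\gamma}|\Gamma'(s)|\,ds=O(\gamma)$ already requires $\Gamma'\in L^1(\BR)$, and that makes $\Gamma'(s)/s$ absolutely integrable at infinity.

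The repair is simpler than what you wrote. The paper tacitly takes $\Gamma,\digamma$ bounded and smooth, with $\Gamma',\digamma'\in L^1(\BR)$ and $\Gamma'(s)/s,\ \digamma'(s)/s\in L^1(\BR)$. This is what makes the uniform $L^1$ bound on the potential and the estimate $\int_{-1}^1|\Gamma'(y/\gamma)|\,dy\le C\gamma$ in Lemma \ref{lemma for smallness of gamma and eta} hold. Since $|u(\gamma s)|\ge c_0\gamma|s|$, the rescaled integrand $m\f{\gamma\theta(\gamma s)}{u(\gamma s)}\Gamma'(s)\,(\psi\phi)(\gamma s)$ is dominated by $C|\Gamma'(s)|/|s|$ and converges pointwise. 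Dominated convergence then gives the limit directly, with no splitting and no integration by parts; the $\digamma$-term and the cross terms go the same way.

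Two smaller bookkeeping points. First, in the unweighted pairing of the statement (the one your Step 2 uses), integrating by parts leaves $\int(\f{\theta_{l,\eta}'}{\theta_{l,\eta}}-\f{\theta'}{\theta})\psi'\phi\,dy$. Its coefficient contains $l u'\digamma'(y/\eta)/\theta_{l,\eta}$, which is $O(1)$ in $L^\infty$. The term drops out because its $L^2$ norm is $O(\eta^{1/2})$, not because $\theta_{l,\eta}\to\theta$ uniformly. If you do integrate by parts with weight $1/\theta_{l,\eta}$, the potential must carry that weight too, and the point mass then appears as $\Lambda/\theta(0)$, as in $E_{m,l}$. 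Second, the remainders you call ``$O(\eta)$ uniformly'' are multiplied by $\theta_{l,\eta}/u_{m,\gamma}\sim 1/y$, so uniform smallness alone does not make them negligible. You need either that near $y=0$ they vanish or carry a factor $\digamma'(y/\eta)$ (true for the $\delta$-modified profiles, where $u'/\theta$ is constant near $0$), or else a principal-value estimate, which gives $O(\eta|\log\eta|)$.
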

	\begin{proof}
		For any $\psi,  \phi \in H_0^1(-1, 1)$, we have
		\beq\bali\nonumber
		\langle  &\left( \CH_{m, \gamma, l, \eta} - \CH_{u, \theta} \right) \psi, \phi \rangle = \int_{-1}^{1} \left(\f{\theta_{l,\eta}'}{\theta_{l,\eta}} - \f{\theta'}{\theta}\right) \pa_y \psi \phi \,\dy\\
		&+ \int_{-1}^{1} \left(\f{\theta_{l,\eta}}{u_{m, \gamma}} \left(\f{u_{m, \gamma}'}{\theta_{l,\eta}}\right)' - \f{\theta}{u_{m, \gamma}} \left(\f{u_{m, \gamma}'}{\theta}\right)'\right)  \psi \phi \,\dy + \int_{-1}^{1}  \left( \f{\theta}{u_{m, \gamma}} \left(\f{u_{m, \gamma}'}{\theta}\right)' - \f{\theta}{u} \left(\f{u'}{\theta}\right)'\right)  \psi \phi \,\dy.
		\eali\eeq
		Letting  $\eta \to 0$, we obtain
		\beq\bali\nonumber
		\lim_{\eta \rightarrow 0} \int_{-1}^{1} \left(\f{\theta_{l,\eta}'}{\theta_{l,\eta}} - \f{\theta'}{\theta}\right) \pa_y \psi \phi \dy = 0,
		\eali\eeq
		and 
		\beq\bali\nonumber
		\lim_{\eta \rightarrow 0} \int_{-1}^{1} \left(\f{\theta_{l,\eta}}{u_{m, \gamma}} \left(\f{u_{m, \gamma}'}{\theta_{l,\eta}}\right)' - \f{\theta}{u_{m, \gamma}} \left(\f{u_{m, \gamma}'}{\theta}\right)'\right) \psi \phi \dy 
		= -\f{u'(0)l}{\theta(0)} \psi(0) \phi(0) \int_\BR \f{\digamma'(y)}{y} \dy.
		\eali\eeq
		Similarly, letting $\gamma \to 0$, we find that
		\beq\bali\nonumber
		\lim_{\gamma \rightarrow 0} \int_{-1}^{1} \left( \f{\theta}{u_{m, \gamma}} \left(\f{u_{m, \gamma}'}{\theta} \right)' - \f{\theta}{u} \left(\f{u'}{\theta}\right)'\right) \psi \phi \dy 
		= \f{\theta(0)m}{u'(0)} \psi(0) \phi(0) \int_\BR \f{\Gamma'(y)}{y} \dy.
		\eali\eeq
		Applying condition \eqref{eq: Gamma and digamma's condition}, we get
		\beq\nonumber
		\CH_{m,\gamma,l,\eta} \rightarrow \CH_{u, \theta} - \left( \f{u'(0)l}{\theta(0)} + \f{\theta(0)m}{u'(0)} \right) \delta(0), \quad \text{when } \gamma, \eta \rightarrow 0.
		\eeq
	\end{proof}
	
	The explicit form of $\CH_{m, l}$ allows us to investigate its spectrum. Recall that $\lambda$ is an eigenvalue of $\CH_{m, l}$ if the corresponding eigenvector $\psi$ satisfies the Rayleigh equation
	\beq\label{Rayleigh function of CH_{m,0,l,0}}
	- \theta \pa_y \left(\f{1}{\theta} \pa_y \psi\right) + \f{\theta}{u} \left(\f{u'}{\theta}\right)' \psi = \lambda \psi + \Lambda \psi(0) \delta(0).
	\eeq
	subject to the Dirichlet boundary condition $\psi(-1) = \psi(1) = 0.$ Here, we define $\Lambda = \Lambda_{m, l}$ as 
	\beq\label{definition of Lambda_m,l}
	\Lambda_{m, l} = \f{\theta(0)}{u'(0)} m + \f{u'(0)}{\theta(0)} l.
	\eeq
	The following lemma establishes the continuous and monotonic relationship between the eigenvalues of $\CH_{m, l}$ and the parameter $\Lambda$.
	
	\begin{lem} \label{Lemma: Lambda(lambda)}
		For any $\lambda \leq 0$, the Rayleigh equation \eqref{Rayleigh function of CH_{m,0,l,0}} admits a nontrivial solution for a unique $\Lambda(\lambda) \in \BR$. Moreover, the function $\Lambda(\lambda)$ is strictly decreasing and satisfies $\lim\limits_{\lambda \to -\infty} \Lambda(\lambda) = \infty$.
	\end{lem}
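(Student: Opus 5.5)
Throughout, fix the shear profile $(u,\theta)$ as in the lemma. The plan is to reduce the singular Sturm--Liouville problem \eqref{Rayleigh function of CH_{m,0,l,0}} to the regular operator $\CH_{u,\theta}$ away from the delta, and to read off $\Lambda$ from a jump condition. Writing the equation in divergence form $-\pa_y(\theta^{-1}\pa_y\psi)+\theta^{-2}\frac{\theta}{u}(u'/\theta)'\psi=\theta^{-1}\lambda\psi+\theta(0)^{-1}\Lambda\psi(0)\delta(0)$, a solution is a function $\psi\in H^1_0(-1,1)$ solving the homogeneous Rayleigh equation $\CH_{u,\theta}\psi=\lambda\psi$ on $(-1,0)$ and on $(0,1)$, continuous at $0$. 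It satisfies the jump relation $-\big[\psi'\big]_{0^-}^{0^+}=\Lambda\psi(0)$. Since $u(0)=0$ and $u'>0$, the potential $V:=\frac{\theta}{u}(u'/\theta)'$ has at worst a simple pole at $0$ (with coefficient $(u'/\theta)'(0)\theta(0)/u'(0)$). Hence I first treat the ODE near $y=0$ by Frobenius: the solutions are $C^0$ with a $y\log|y|$ term at worst, so $\psi'$ may have a logarithmic singularity. I will therefore interpret the jump via the weak formulation $E_\Lambda[\psi,\phi]=\lambda\langle\theta^{-1}\psi,\phi\rangle$, where $E_\Lambda$ is the quadratic form appearing in the introduction. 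The quantity $|V|\,|\psi|^2$ is integrable for $\psi\in H^1_0$, since $|\psi(y)-\psi(0)|\le C|y|^{1/2}$ and the principal value of $1/y$ is handled by pairing with $\psi(0)^2$. Equivalently, and more cleanly, I will substitute $\psi=u\,\varphi$ (the standard Rayleigh trick), which removes the singular potential.

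\textbf{Key step: monotonicity of the quadratic form on $\{\lambda\le0\}$.} I will show that for $\Lambda=0$ and $\lambda\le 0$ there are no nontrivial solutions vanishing at $0$ appropriately. Using $\psi=u\varphi$, the form becomes
\beq\nonumber
\int\frac{u^2}{\theta}|\varphi'|^2\,\dy\;\ge 0 ,
\eeq
up to boundary terms at $0$ created by the delta. This suggests that the homogeneous form $E_0$ minus $\lambda\|\psi\|^2_{\theta^{-1}}$ is nonnegative on functions with $\psi(0)=0$, and positive unless $\psi\equiv0$ (for $\lambda\le 0$).

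\textbf{Existence and uniqueness of $\Lambda(\lambda)$.} For fixed $\lambda\le0$, I solve the Dirichlet problems $\CH_{u,\theta}\psi_\pm=\lambda\psi_\pm$ on $(0,1)$ and on $(-1,0)$ with $\psi_\pm(\pm1)=0$ and $\psi_\pm(0)=1$. These are uniquely solvable by the positivity just described (Lax--Milgram on $H^1_0$ of each half interval, plus lifting of the boundary value $1$). Gluing them gives $\psi_\lambda$, and I define
\beq\nonumber
\Lambda(\lambda):=\frac{E_0[\psi_\lambda]-\lambda\langle\theta^{-1}\psi_\lambda,\psi_\lambda\rangle}{|\psi_\lambda(0)|^2/\theta(0)} .
\eeq
This equals the weak jump. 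Uniqueness follows because any solution has $\psi(0)\neq0$ (otherwise it would be a kernel element for $\Lambda=0$ with $\psi(0)=0$, which the key step excludes), so after normalization it coincides with $\psi_\lambda$.

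\textbf{Monotonicity and the limit.} Differentiate in $\lambda$. Writing $Q_\lambda(\psi)=E_0[\psi]-\lambda\|\psi\|^2_{\theta^{-1}}$, we have $\theta(0)^{-1}\Lambda(\lambda)=\min\{Q_\lambda(\psi):\psi(0)=1\}$, because $\psi_\lambda$ is the constrained minimizer. As a minimum of functions that are strictly decreasing in $\lambda$ (the term $-\lambda\|\psi\|^2$ with $\|\psi\|>0$), $\Lambda$ is strictly decreasing and continuous. For the limit, take $\lambda=-\mu^2$ with $\mu\to\infty$. The minimizer behaves like $e^{-\mu|y|}$, so $Q_\lambda\gtrsim\mu$ after controlling the singular potential by the Hardy-type bound $\int|V||\psi|^2\le C\|\psi\|_{H^1}\|\psi\|_{L^2}+C$. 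This gives $\Lambda(\lambda)\to\infty$.

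The main obstacle is making the singular potential rigorous near $y=0$, namely justifying the substitution $\psi=u\varphi$ and the nonnegativity (key step) in the presence of the $1/y$ pole. I would try first to establish the identity $Q_\lambda(u\varphi)=\int u^2\theta^{-1}|\varphi'|^2-\lambda\int u^2\theta^{-1}|\varphi|^2$ for $\varphi$ smooth away from $0$ with $u\varphi\in H^1_0$, by integrating by parts on $(-1,-\varepsilon)\cup(\varepsilon,1)$ and letting $\varepsilon\to0$.
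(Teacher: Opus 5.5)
Your proof is correct, but it is organized differently from the paper's.

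The paper works at the level of the ODE. It takes the regular solution $u\phi_1$ from Proposition \ref{solution of phi = u phi_1} (with $\phi_1\geq 1$, $\phi_1(0)=1$) and builds the half-interval solutions by reduction of order, $\psi_\pm=u\phi_1\int_{\pm1}^y\theta/(u^2\phi_1^2)\,\dz$. These automatically satisfy $\psi_\pm(\pm1)=0$ and $\psi_\pm(0)=-\theta(0)/u'(0)$. The paper then reads off $\Lambda(\lambda)$ as an explicit formula from the derivative jump, and gets monotonicity by differentiating that formula in $\lambda$, writing $\pa_\lambda\phi_1=K\phi_1$ with $K<0$.

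You instead use the zero mode $u$ itself ($\CH_{u,\theta}u=0$) through the Picone-type identity $Q_0(u\varphi)=\int\theta^{-1}u^2|\varphi'|^2$ on $\{\psi(0)=0\}$. You solve the half-interval problems variationally and identify $\theta(0)^{-1}\Lambda(\lambda)=\min\{Q_\lambda(\psi):\psi\in H^1_0(-1,1),\ \psi(0)=1\}$. This gives uniqueness, strict monotonicity (evaluate at the attained minimizer for the smaller $\lambda$), and even concavity of $\Lambda$ in $\lambda$. It needs no differentiation in $\lambda$ and nothing from Proposition \ref{solution of phi = u phi_1}. Your limit is also proved in the direction actually stated, since $\int(|\psi'|^2+\mu^2|\psi|^2)\geq 2\mu\int|\psi\psi'|\geq 2\mu|\psi(0)|^2$ for every competitor. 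By contrast, the paper's last line states the converse implication, $\lambda\to-\infty$ as $\Lambda\to\infty$.

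What the paper's route buys is an explicit eigenfunction, with $|\psi(0)|$ bounded below and $W^{1,\infty}$ control, which it reuses for $\psi_{\gamma,\eta}$ in the next lemma.

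Three details to tighten in your write-up:
\begin{itemize}
\item Lax--Milgram needs coercivity, not mere positivity. Here coercivity follows from the weighted Hardy inequality $\int_0^1\varphi^2\leq 4\int_0^1 y^2|\varphi'|^2$ for $\varphi=\psi/u$ with $\varphi(1)=0$.
\item $|V|\,|\psi|^2$ is \emph{not} integrable when $\psi(0)\neq0$ and $(u'/\theta)'(0)\neq0$. So your formula for $\Lambda(\lambda)$ and the Hardy-type bound in the limit step must both use the principal-value form. That form is bounded by $\varepsilon\|\psi'\|_{L^2}^2+C_\varepsilon\|\psi\|_{L^2}^2$, which is enough for the limit.
\item The same issue is hidden in the paper. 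Its claim that $b_\pm$ is finite actually requires $(u'/\theta)'(0)=0$. This holds for the modified profiles $u^\delta,\theta^\delta$; otherwise only the symmetric jump $\lim_{y\to0^+}(\psi'(y)-\psi'(-y))$ is meaningful, which is exactly your principal-value convention.
\end{itemize}
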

	
	\begin{proof}
		Let $\psi = \mathbf{1}_{[-1,0)} \psi_- + \mathbf{1}_{(0,1]} \psi_+$ be a solution to \eqref{Rayleigh function of CH_{m,0,l,0}}. Then $\psi_{\pm}$ satisfies
		\beq\left\{\bali\label{equation of psi_pm}
		- \theta \pa_y \left(\f{1}{\theta} \pa_y \psi_\pm\right) + \f{\theta}{u} \left(\f{u'}{\theta}\right)' \psi_\pm = \lambda \psi_\pm, \quad y \neq 0&,\\
		\psi_+'(0) - \psi_-'(0) = - \Lambda \psi(0) , \quad \psi_+(1) = \psi_-(-1) = 0 &.
		\eali\right.\eeq
		Let $\psi_\pm = c_\pm u \phi_1 T_\pm$, where $\phi_1$ is determined by \ref{solution of phi = u phi_1}. Then $T_\pm$ satisfies
		\beq\nonumber
		\f{d}{dy} \left(\f{u^2 \phi_1^2}{\theta} \f{d}{dy} T_\pm\right) = 0, \quad T_\pm (\pm 1) = 0.
		\eeq
		Equivalently,
		\beq\bali\nonumber
		T_\pm(y) 
		= \int_{\pm 1}^y \f{\theta}{u^2 \phi_1^2} \dz 
		= \int_{\pm 1}^y \f{\theta}{u^2}\left(\f{1}{\phi_1^2} -1\right) \dz + \int_{\pm 1}^y \f{\theta}{u^2} \dz.
		\eali\eeq
		Using the fact that $\phi_1(0) = 1$, we find
		\beq\bali\nonumber
		\psi_\pm(0) = \lim_{y \rightarrow 0} \psi_\pm(y) 
		= \lim_{y \rightarrow 0} c_\pm u(y) \int_{\pm 1}^y \f{\theta}{u^2} \dz = -c_\pm \f{\theta(0)}{u'(0)}.
		\eali\eeq
		For the continuity of $\psi$, we choose $c_+ = c_- = 1$. Then the $\psi$ can be expressed as
		\beq\nonumber
		\psi(y) =  \mathbf{1}_{[-1,0)} u(y) \phi_1(y) \int_{-1}^y \f{\theta}{u^2 \phi_1^2} \dz  + \mathbf{1}_{(0,1]} u(y) \phi_1(y) \int_{1}^y \f{\theta}{u^2 \phi_1^2} \dz.
		\eeq
		Furthermore, we have
		\beq\nonumber
		\psi_\pm'(0) = u'(0) \int_{\pm 1}^0 \f{\theta}{u^2} (\f{1}{\phi_1^2} -1) \dz + \f{d}{dy} \left(u(y) \int_{\pm 1}^y \f{\theta}{u^2} \right) \bigg|_{y=0},
		\eeq
		We need to verify that the second term in the above expression is finite. Observe that
		\beq\bali\nonumber
		u(y) \int_{\pm 1}^y \f{\theta}{u^2} 
		=& u(y) \int_{\pm 1}^y \f{\theta(z) - \theta(0)}{u^2(z)} \dz 
		+ u(y) \int_{\pm 1}^y \theta(0) \f{u'(0) - u'(z)}{u^2(z) u'(0)} \dz\\
		&+ u(y) \f{\theta(0)}{u'(0)} \int_{\pm 1}^y \f{u'(z)}{u^2(z)} \dz
		\eali\eeq
		We compute the integrals as follows:
		\beq\bali\nonumber
		\f{d}{dy} \left( u(y) \int_{\pm 1}^y \f{\theta(z) - \theta(0)}{u^2(z)} \dz  \right) \bigg|_{y=0} &= u'(0) \int_{\pm 1}^0 \f{\theta(z) - \theta(0)}{u^2(z)} \dz < \infty,\\
		\f{d}{dy} \left( u(y) \int_{\pm 1}^y \theta(0) \f{u'(0) - u'(z)}{u^2(z) u'(0)} \dz \right) \bigg|_{y=0} &= u'(0) \int_{\pm 1}^0 \theta(0) \f{u'(0) - u'(z)}{u^2(z) u'(0)} \dz < \infty,\\
		\f{d}{dy} \left( u(y) \f{\theta(0)}{u'(0)} \int_{\pm 1}^y \f{u'(z)}{u^2(z)} \dz \right) \bigg|_{y=0} &= \f{d}{dy} \left( \f{\theta(0)}{u'(0)} \f{u(y)}{u(\pm 1)} - \f{\theta(0)}{u'(0)} \right) \bigg|_{y=0}\\
		&= \f{\theta(0)}{u(\pm 1)} < \infty.
		\eali\eeq
		Let us define $b_\pm$ by
		\beq\nonumber
		b_\pm = \f{d}{dy} \left(u(y) \int_{\pm 1}^y \f{\theta}{u^2} \right) \bigg|_{y=0} < \infty,
		\eeq
		Then the second condition of \eqref{equation of psi_pm} becomes
		\beq\nonumber
		- \Lambda \psi(0) = \psi+'(0) - \psi_-'(0) = -u'(0) \int_{-1}^{1} \f{\theta}{u^2} \left(\f{1}{\phi_1^2} -1\right) \dz + b_+ -b_-.
		\eeq
		Thus, $\Lambda$ is uniquely determined by
		\beq\nonumber
		\Lambda = \f{u'(0)}{\theta(0)}\left[ -u'(0) \int_{-1}^{1} \f{\theta}{u^2} \left(\f{1}{\phi_1^2} -1\right) \dz + b_+ -b_- \right].
		\eeq
		
		Finally, we show the monotonicity of $\Lambda(\lambda)$. Differentiating with respect to $\lambda$, we obtain
		\beq\nonumber
		\pa_\lambda \Lambda = \f{2u'(0)^2}{\theta(0)} \int_{-1}^1 \f{\theta(z)}{u^2(z) \phi_1^2(z)} \f{\pa_\lambda \phi_1(z)}{\phi_1(z)} \dz.
		\eeq
		Recall that $\phi_1(y)$ satisfies
		\beq\label{Lemma: Lambda(lambda): eq1}
		\f{d}{dy} \left(\f{u^2}{\theta} \f{d}{dy} \phi_1\right) = - \lambda \f{u^2}{\theta} \phi_1.
		\eeq
		Noting that $\phi_1(y) \geq 1$, we may set $\pa_\lambda \phi_1 = K \phi_1$. Differentiating \eqref{Lemma: Lambda(lambda): eq1} with respect to $\lambda$, we find that $K$ satisfies
		\beq\nonumber
		\f{d}{dy} \left(\f{u^2}{\theta} \phi_1^2 \f{d}{dy} K\right) = - \f{u^2}{\theta} \phi_1^2, \quad K(0)=0.
		\eeq
		Hence, $K$ can be expressed as
		\beq\nonumber
		K(y) = -\int_0^y \f{\theta(y')}{u^2(y') \phi_1^2(y')} \int_0^{y'} \f{u^2(y'') \phi_1^2(y')} {\theta(y'')} \dy'' \dy'<0.
		\eeq
		Consequently, we obtain
		\beq\nonumber
		\pa_\lambda \Lambda 
		= \f{2u'(0)^2}{\theta(0)} \int_{-1}^1 \f{\theta(z)}{u^2(z) \phi_1^2(z)} K(z) \dz <0.
		\eeq
		From \eqref{Rayleigh function of CH_{m,0,l,0}}, $\lambda$ admits the variational characterization
		\beq\nonumber
		\lambda = \min_{\substack{\int_{-1}^1 \f{1}{\theta}|\psi|^2dy = 1, \\ \psi \in H_0^1(-1,1)}.} \int_{-1}^1 \f{1}{\theta} |\pa_y \psi|^2 + \f{1}{u}\left(\f{u'}{\theta}\right)'|\psi|^2 \dy - \f{\Lambda}{\theta(0)} |\psi(0)|^2.
		\eeq
		We find that $\lambda \rightarrow -\infty$ when $\Lambda \rightarrow \infty$. 
	\end{proof}
	
	\subsection{The convergence of the perturbed distorted Rayleigh operator}\label{sec:The convergence of the perturbed}
	In this subsection, we investigate the convergence of the perturbed distorted Rayleigh operator $\CH_{m, \gamma, l, \eta}$ to its limit $\CH_{m, l}$ for sufficiently small $\gamma$ and $\eta$. More precisely, using the energy method, we show that the difference between the minimal eigenvalues of $\CH_{m, \gamma, l, \eta}$ and $\CH_{m, l}$ is controlled by $\gamma$ and $\eta$.
	
	For every $\psi \in H_0^1(-1,1)$, we define the associated energy functionals of $\CH_{m, l}$ and $\CH_{m, \gamma, l, \eta}$ as follows:
	\beq\bali\nonumber
	E_{m,l}[\psi] &= \int_{-1}^1 \f{1}{\theta} |\f{d}{dy} \psi|^2 + \f{1}{u}\left(\f{u'}{\theta}\right)'|\psi|^2 \dy - \f{\Lambda}{\theta(0)} |\psi(0)|^2,\\
	E_{m,\gamma,l,\eta}[\psi] &=  \int_{-1}^1 \f{1}{\theta_{l,\eta}} |\f{d}{dy}  \psi|^2 
	+ \f{1}{u_{m,\gamma}} \left(\f{u_{m,\gamma}'}{\theta_{l,\eta}} \right)' |\psi|^2 \dy.
	\eali\eeq
	Let $\lambda_{m,l}$ and $\lambda_{m,\gamma,l,\eta}$ denote the smallest eigenvalues of $\CH_{m,l}$ and $\CH_{m,\gamma,l,\eta}$ given by
	\beq\bali\label{expression of lambda_{m,l}}
	\lambda_{m,l} &= \min_{\psi \in \Omega_\theta} E_{m,l}[\psi], && \Omega_\theta = \{\psi \in H_0^1(-1,1): \int_{-1}^1 \f{1}{\theta}|\psi|^2 \dy = 1\},\\
	\lambda_{m,\gamma,l,\eta} 
	&= \min_{\psi \in \Omega_{l, \eta}}  E_{m,\gamma,l,\eta}[\psi], && \Omega_{l, \eta} = \{\psi \in H_0^1(-1,1): \int_{-1}^1 \f{1}{\theta_{l,\eta}}|\psi|^2 \dy = 1\}.
	\eali\eeq
	With the help of Proposition \ref{solution of phi = u phi_1}, we first establish the uniform boundedness of the minimizer for the energy $E_{m, \gamma, l, \eta}$.
	
	\begin{lem}\label{lemma: boundedness of psi_{gamma,mu}}
		Assume that $|m|,|l| \leq M$ and that $\gamma, \eta \ll 1$ are sufficiently small. Let $\lambda_{m,\gamma,l,\eta} < 0$ and let $\psi_{\gamma,\eta} \in \Omega_{l, \eta}$ be a minimizer such that
		\beq\nonumber
		\lambda_{m,\gamma,l,\eta} = E_{m,\gamma,l,\eta}[\psi_{\gamma,\eta}].
		\eeq 
		Then there exists a constant $C>0$ such that
		\beq\nonumber
		|\psi_{\gamma,\eta}(0)| \geq \f{1}{C}, \quad \|\psi_{\gamma,\eta}\|_{W^{1, \infty}} \leq C. 
		\eeq
	\end{lem}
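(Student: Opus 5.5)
The plan is to use the Euler–Lagrange equation for $\psi_{\gamma,\eta}$ together with the structure of the perturbations: $\tilde\Gamma$ and $\tilde\digamma$ are small in $L^\infty$, but they produce potentials that concentrate near $y=0$ with uniformly bounded $L^1$ mass. The minimizer satisfies
\[
-\theta_{l,\eta}\partial_y\Big(\frac{1}{\theta_{l,\eta}}\partial_y\psi_{\gamma,\eta}\Big)+V_{m,\gamma,l,\eta}\,\psi_{\gamma,\eta}=\lambda_{m,\gamma,l,\eta}\psi_{\gamma,\eta},\qquad V_{m,\gamma,l,\eta}=\frac{\theta_{l,\eta}}{u_{m,\gamma}}\Big(\frac{u_{m,\gamma}'}{\theta_{l,\eta}}\Big)'.
\]

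\emph{Step 1: $H^1$ bound.} Away from $|y|\lesssim\gamma+\eta$, the potential $V$ equals the unperturbed one, which is bounded because $u(0)=0$ and $u'>0$. Near $0$, write $V=V_0+\frac{\theta}{u}\big(\frac{\tilde\Gamma'}{\theta}\big)'+(\text{terms from }\tilde\digamma)$. The singular parts look like $\frac{m}{u}\Gamma'(y/\gamma)$ and $\frac{l\,u'}{\theta u}\digamma'(y/\eta)$, whose $L^1$ norms are bounded by $C\int|\Gamma'(s)|/|s|\,ds$ and the analogue for $\digamma$. I will assume these integrals are finite, since the integrability conditions \eqref{eq: Gamma and digamma's condition}–\eqref{eq: Gamma and digamma's condition2} are intended for this.

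Using $\|\psi\|_{L^\infty}^2\le \epsilon\|\psi'\|_{L^2}^2+C_\epsilon\|\psi\|_{L^2}^2$ in one dimension, the energy $E_{m,\gamma,l,\eta}$ satisfies $E\ge c\|\psi'\|_{L^2}^2-CM^2$. Combined with $\lambda_{m,\gamma,l,\eta}<0$, this gives $\|\psi_{\gamma,\eta}'\|_{L^2}\le C$, and hence $\|\psi_{\gamma,\eta}\|_{L^\infty}\le C$. A lower bound on $\lambda$ follows in the same way, so $|\lambda|\le C$.

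\emph{Step 2: $W^{1,\infty}$ bound.} I integrate the equation in the form $\big(\frac{1}{\theta_{l,\eta}}\psi'\big)'=\frac{1}{\theta_{l,\eta}}(V-\lambda)\psi$. The right-hand side has $L^1$ norm at most $C(\|V\|_{L^1}+1)\|\psi\|_{L^\infty}\le C$. Hence $\frac{1}{\theta_{l,\eta}}\psi'$ has bounded variation. Its mean is controlled by $\|\psi'\|_{L^2}$, so it is bounded in $L^\infty$, and since $\theta_{l,\eta}\approx\theta$ is bounded above and below, this gives $\|\psi_{\gamma,\eta}\|_{W^{1,\infty}}\le C$.

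One caveat is needed. $\frac{1}{u}(\cdot)$ is not integrable in general at $y=0$. The term $\frac{m\Gamma'(y/\gamma)}{u(y)}$ is handled by the substitution $s=y/\gamma$, which gives $\int\frac{|\Gamma'(s)|}{|s|}\,ds$ up to a factor. The unperturbed part $\frac{\theta}{u}(u'/\theta)'$ is bounded since $(u'/\theta)'$ is $C^1$ and $u\sim u'(0)y$. This is also fine for the modified profiles of Theorem \ref{Thm: travel-wave}, where $\theta^\delta$ is constant near $0$.

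\emph{Step 3: lower bound on $|\psi(0)|$.} This is the main obstacle. I argue by contradiction and compactness. Suppose $\psi_n(0)\to 0$ along a sequence $\gamma_n,\eta_n\to0$ with $m_n\to m$, $l_n\to l$, and $\lambda_n<0$. By Step 2 and Arzelà–Ascoli, $\psi_n\to\psi$ in $C^0$ and weakly in $H^1$, with $\int\psi^2/\theta=1$. The singular part of the potential, tested against $\psi_n^2$, converges to $-\Lambda_{m,l}\psi(0)^2/\theta(0)=0$.

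The point I must make quantitative is this. Using the oscillation bound $|\psi_n(y)-\psi_n(0)|\le C|y|$ and the tail estimate \eqref{eq: Gamma and digamma's condition2}, the concentrated potential contributes at most $C(|\psi_n(0)|^2+\gamma_n+\eta_n)$. Lower semicontinuity then gives $\lim\lambda_n\ge E_{u,\theta}[\psi]$, where $E_{u,\theta}$ is the energy of the unperturbed operator $\CH_{u,\theta}$, since the singular term vanishes.

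If I can establish that $E_{u,\theta}\ge0$, or at least that $\CH_{u,\theta}$ has no eigenvalue at or below the relevant negative level, then passing $\lambda_n$ to the limit gives a contradiction. This positivity holds for $\CH_{u,\theta}$ by the factorization $\psi=u\phi_1T$ in Lemma \ref{Lemma: Lambda(lambda)} with $\Lambda=0$: there $\Lambda(\lambda)\neq0$ for $\lambda$ below $\Lambda^{-1}(0)$.

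To make $C$ uniform, I will use the monotone map $\Lambda(\lambda)$ from Lemma \ref{Lemma: Lambda(lambda)} and restrict to the regime $\lambda_{m,\gamma,l,\eta}<0$ below the threshold. If this fails, I will instead use the equation directly. Integrating across $|y|\le\gamma+\eta$, the jump of $\psi'$ is approximately $-\Lambda\psi(0)$, and the negative eigenvalue must be created by this jump. Otherwise the remaining equation on $(-1,0)$ and $(0,1)$ with $\lambda<0$ forces $\psi\equiv0$, again by the positivity coming from the $u\phi_1$ factorization.
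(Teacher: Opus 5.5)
\textbf{Steps 1 and 2.} These are fine, modulo the same implicit assumption the paper makes: a uniform $L^1$ bound on the potential $\frac{\theta_{l,\eta}}{u_{m,\gamma}}\big(\frac{u_{m,\gamma}'}{\theta_{l,\eta}}\big)'$. Your justification that $\frac{\theta}{u}(u'/\theta)'$ is bounded ``since $(u'/\theta)'$ is $C^1$'' actually needs $(u'/\theta)'(0)=0$. That holds for the modified profiles $u^\delta,\theta^\delta$, but not in general. Integrating the equation to get a BV bound on $\frac{1}{\theta_{l,\eta}}\psi'$ is a cleaner route to $W^{1,\infty}$ than the paper's term-by-term estimate of the explicit representation $\psi=u_{m,\gamma}\phi_{\gamma,\mu}T_\pm$.

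\textbf{The gap is in Step 3.} The positivity you invoke is false in general. $E_{u,\theta}\ge0$ on $H^1_0(-1,1)$ fails as soon as $\CH_{u,\theta}$ has a negative eigenvalue. In Lemma \ref{Lemma: Lambda(lambda)} this is the case $\Lambda(0)<0$, with lowest eigenvalue $\Lambda^{-1}(0)<0$; take for instance $u=\tanh(Ky)$ with $K$ large. The factorization with $\Lambda=0$ only tells you there is no eigenvalue below $\Lambda^{-1}(0)$, and nothing forces $\lim\lambda_n$ below that level. Heuristically $\lim\lambda_n=\lambda_{m,l}$, which lies below it only when $\Lambda_{m,l}>0$. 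So your contradiction fails for admissible $(m,l)$ with $\Lambda_{m,l}\le 0$ and $\lambda_{m,\gamma,l,\eta}<0$. Restricting to ``the regime below the threshold'' adds a hypothesis the lemma does not have. Your fallback claim that the negative eigenvalue ``must be created by the jump'' is false for the same reason.

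\textbf{The missing idea.} You never need positivity on all of $H^1_0$, only on the subspace $\{\psi(0)=0\}$ in which your limit $\psi$ lies, and there it holds strictly. Indeed $u$ itself solves $\CH_{u,\theta}u=0$; this is the case $\lambda=0$, $\phi_1\equiv 1$ of Proposition \ref{solution of phi = u phi_1}. Moreover $u$ has no zero on $[-1,0)\cup(0,1]$. So the substitution $\psi=ug$ on each half-interval gives
\[
E_{u,\theta}[\psi]=\int_{-1}^{1}\frac{u^2}{\theta}\Big|\Big(\frac{\psi}{u}\Big)'\Big|^2\,dy .
\]
The boundary terms $\frac{u'}{\theta u}\psi^2$ vanish at $\pm1$ by the Dirichlet condition. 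They vanish at $0$ because $\psi(0)=0$ gives $|\psi(y)|^2\le |y|\int_0^{y}|\psi'|^2=o(|y|)$. The right side is zero only if $\psi=c_\pm u$ on each half, which $\psi(\pm1)=0$ rules out. Hence $0<E_{u,\theta}[\psi]\le \lim\lambda_n\le 0$, a contradiction. This also covers $\lim\lambda_n=0$, which your fallback, phrased for $\lambda<0$, does not address.

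\textbf{Comparison with the paper.} This is precisely the paper's mechanism, but the paper uses it at fixed $(\gamma,\eta)$ with no compactness. Any solution on $(0,\pm1)$ vanishing at $\pm1$ is $c_\pm u_{m,\gamma}\phi_{\gamma,\mu}T_\pm$, whose limit at $0$ is $-c_\pm\theta_{l,\eta}(0)/u_{m,\gamma}'(0)$. Continuity forces $c_+=c_-$, so the unnormalized eigenfunction has the explicit nonzero value $-\theta_{l,\eta}(0)/u_{m,\gamma}'(0)$ at $0$. Its uniform $L^\infty$ bound, which comes from $\phi_{\gamma,\mu}\ge1$ being increasing in $|y|$, then gives $|\psi_{\gamma,\eta}(0)|\ge 1/C$ after normalization.
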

	
	\begin{proof}
		We first show that $\lambda_{m,\gamma,l,\eta}$ is bounded from below for bounded $m,l$ and small $\gamma,\eta$. Applying the Gagliardo-Nirenberg inequality yields
		\beq\bali\nonumber
		\int_{-1}^1  \f{1}{u_{m,\gamma}} \left( \f{u_{m,\gamma}'}{\theta_{l,\eta}} \right)' |\psi|^2 \dy \leq& \left\|\f{1}{u_{m,\gamma}} \left( \f{u_{m,\gamma}'}{\theta_{l,\eta}} \right)' \right\|_{L^1} \|\psi\|_{L^\infty}^2 \leq C \|\psi\|_{L^2} \|\f{d}{dy} \psi\|_{L^2}\\
		\leq& \f{1}{2} \int_{-1}^1 \f{1}{\theta_{l,\eta}} |\f{d}{dy}  \psi|^2 \dy + C \int_{-1}^1 \f{1}{\theta_{l,\eta}}|\psi|^2 \dy.
		\eali\eeq
		So $\lambda_{m,\gamma,l,\eta}$ has the lower bound
		\beq\bali\nonumber
		\lambda_{m,\gamma,l,\eta} 
		= \min_{\psi \in \Omega_2}  E_{m,\gamma,l,\eta}[\psi] \geq \f{1}{2} \int_{-1}^1 \f{1}{\theta_{l,\eta}} |\f{d}{dy}  \psi|^2 \dy - C \geq -C.
		\eali\eeq
		From Lemma \ref{Lemma: Lambda(lambda)},  $\psi_{\gamma,\eta}$ can be expressed as
		\beq\bali\nonumber
		\psi_{\gamma,\eta}(y) 
		= \mathbf{1}_{[-1,0)} u_{m,\gamma}(y) \phi_{\gamma,\mu}(y) \int_{-1}^y \f{\theta_{l,\eta}}{u_{m,\gamma}^2 \phi_{\gamma,\mu}^2} \dz  
		+ \mathbf{1}_{(0,1]} u_{m,\gamma}(y) \phi_{\gamma,\mu}(y) \int_{1}^y \f{\theta_{l,\eta}}{u_{m,\gamma}^2 \phi_{\gamma,\mu}^2} \dz,
		\eali\eeq
		such that
		\beq\nonumber
		\psi_{\gamma,\eta}(0) = - \f{\theta_{l,\eta}(0)}{u_{m,\gamma}'(0)}.
		\eeq
		Here, $\phi_{\gamma,\mu}$ is the solution to the integral equation
		\beq\nonumber
		\phi_{\gamma,\mu}(y) = 1 - \lambda_{m,\gamma,l,\eta}  \int_{y'}^y \f{\theta_{l,\eta}(z')}{u_{m,\gamma}(z')^2} \int_{y'}^{z'} \f{u_{m,\gamma}(z'')^2}{\theta_{l,\eta}(z'')} \phi_{\gamma,\mu}(z'') \dz''\dz'.
		\eeq
		By Proposition \ref{solution of phi = u phi_1}, there exists $C>0$ such that
		\beq\nonumber
		1 \leq \phi_{\gamma,\mu}(y) = e^{\int_0^y |\f{\pa_y \phi_{\gamma,\mu}}{\phi_{\gamma,\mu}}| \dz} \leq e^{C|y|},
		\eeq
		and
		\beq\nonumber
		\left| \f{d}{dy} \phi_{\gamma,\mu}(y) \right| \leq C \sqrt{- \lambda_{m,\gamma,l,\eta}}\phi_{\gamma,\mu}(y) \leq C e^{C|y|}.
		\eeq
		We expand $\psi_{\gamma,\eta}(y)$ as follows:
		\beq\bali\nonumber
		&\mathbf{1}_{\BR^\pm \cap [-1,1]} \psi_{\gamma,\eta}(y) \\
		& \qquad = u_{m,\gamma}(y) \phi_{\gamma,\mu}(y) \int_{\pm 1}^y \f{\theta_{l,\eta}}{u_{m,\gamma}^2} \left(\f{1}{\phi_{\gamma,\mu}^2} - 1\right) \dz 
		+ u_{m,\gamma}(y) \phi_{\gamma,\mu}(y) \int_{\pm 1}^y \f{\theta_{l,\eta}(z) - \theta_{l,\eta}(0)}{u_{m,\gamma}^2(z)} \dz \\
		& \qquad \quad + u_{m,\gamma}(y) \phi_{\gamma,\mu}(y) \f{\theta_{l,\eta}(0)}{u_{m,\gamma}'(0)} \int_{\pm 1}^y \f{u_{m,\gamma}'(z) - u_{m,\gamma}'(0)}{u_{m,\gamma}^2(z)} \dz\\
		& \qquad \quad + \phi_{\gamma,\mu}(y) \left(\f{\theta_{l,\eta}(0)}{u_{m,\gamma}'(0)} \f{u_{m,\gamma}(y)}{u_{m,\gamma}(\pm 1)} - \f{\theta_{l,\eta}(0)}{u_{m,\gamma}'(0)} \right).
		\eali\eeq
		Using the estimates
		\beq\nonumber
		\| u_{m,\gamma} - u\|_{W^{1, \infty}} \leq C |m \gamma|, \quad \| \theta_{l,\eta} - \theta\|_{L^\infty} \leq C |l \eta|,
		\eeq
		a direct calculation shows that
		\beq\nonumber
		\|\psi_{\gamma,\eta}\|_{W^{1, \infty}} \leq C,
		\eeq
		which completes the proof.
	\end{proof}

	Next, we derive a uniform estimate on the difference between $\lambda_{m,l}$ and $\lambda_{m,\gamma,l,\eta}$.

	\begin{lem}\label{lemma for smallness of gamma and eta}
		For every $M > 0$, there exists $\delta > 0$ such that if $|m|, |l| \leq M$ and $\gamma, \eta \leq \delta$, then it holds
		\beq\nonumber
		|\lambda_{m,l} - \lambda_{m,\gamma,l,\eta}| \leq C |m \gamma| + C|l \eta|,
		\eeq
		where the constant $C$ is independent of $M$.
	\end{lem}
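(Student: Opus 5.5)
We use the two variational characterizations \eqref{expression of lambda_{m,l}} and prove the two one-sided inequalities $\lambda_{m,\gamma,l,\eta}\le \lambda_{m,l}+C(|m\gamma|+|l\eta|)$ and $\lambda_{m,l}\le \lambda_{m,\gamma,l,\eta}+C(|m\gamma|+|l\eta|)$. In each case the minimizer of one functional is inserted as a test function in the other. The quadratic form is first decomposed exactly as in the proof of the lemma computing $\CH_{m,l}$. For $\psi\in H_0^1(-1,1)$ we write
\begin{equation*}
E_{m,\gamma,l,\eta}[\psi]-E_{m,l}[\psi]=\int_{-1}^1\Big(\f{1}{\theta_{l,\eta}}-\f1\theta\Big)|\psi'|^2\dy+\int_{-1}^1 \big(W_{\gamma,\eta}-W\big)|\psi|^2\dy+\f{\Lambda_{m,l}}{\theta(0)}|\psi(0)|^2,
\end{equation*}
where $W_{\gamma,\eta}=\f{1}{u_{m,\gamma}}(\f{u_{m,\gamma}'}{\theta_{l,\eta}})'$ and $W=\f1u(\f{u'}{\theta})'$. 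The normalization constraints differ only by $\int(\theta_{l,\eta}^{-1}-\theta^{-1})|\psi|^2=O(|l\eta|)\|\psi\|_{L^\infty}^2$. After rescaling $\psi$ this costs $O(|l\eta|)\,|\lambda|$, and $|\lambda|$ is bounded by the lower bound from Lemma \ref{lemma: boundedness of psi_{gamma,mu}} together with the boundedness of $\Lambda_{m,l}$ for $|m|,|l|\le M$.

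The key quantitative step is a bound of the form
\begin{equation*}
\Big|\int_{-1}^1 (W_{\gamma,\eta}-W)|\psi|^2\dy+\f{\Lambda_{m,l}}{\theta(0)}|\psi(0)|^2\Big|\le C(|m\gamma|+|l\eta|)\|\psi\|_{W^{1,\infty}}^2,
\end{equation*}
and similarly $O(|l\eta|)\|\psi'\|_{L^\infty}^2$ for the gradient term, since $\theta_{l,\eta}-\theta=O(|l\eta|)$ in $L^\infty$. To prove it, expand $W_{\gamma,\eta}-W$. The singular part in $\eta$ is $-\f{\theta_{l,\eta}'-\theta'}{\theta}\f{u'}{u}$ up to smooth errors, with $\theta'_{l,\eta}-\theta'\approx l u'(y)\digamma'(y/\eta)$. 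The singular part in $\gamma$ is $\f{\tilde\Gamma''}{u}\approx m\f{\theta\Gamma'(y/\gamma)}{\gamma u}$. Write $|\psi(y)|^2=|\psi(0)|^2+(|\psi(y)|^2-|\psi(0)|^2)$. The constant part gives $\int \Gamma'(y/\gamma)/(\gamma u(y))\dy$ (and the analogous $\eta$-integral). Substituting $y=\gamma s$ and using $u(\gamma s)=u'(0)\gamma s+O(\gamma^2s^2)$, this equals $\f{1}{u'(0)}\int_{|s|<1/\gamma}\Gamma'(s)/s\,ds+O(\gamma)$. The condition \eqref{eq: Gamma and digamma's condition} together with the tail bound \eqref{eq: Gamma and digamma's condition2} (which, at $|x|=1/\gamma$, gives the truncation error $C\gamma$) then cancels exactly the $\delta(0)$ term with coefficient $\Lambda_{m,l}$. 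The remainder part is handled by $||\psi(y)|^2-|\psi(0)|^2|\le 2\|\psi\|_{W^{1,\infty}}^2|y|$, which kills the $1/u\sim1/y$ singularity. We are then left with $\int|\Gamma'(y/\gamma)|\gamma^{-1}\dy=O(\gamma)\cdot\gamma^{-1}\cdot\gamma$-type integrals of size $O(\gamma)$, assuming $\Gamma'$, $\digamma'$ are integrable (e.g. compactly supported). Cross terms such as $\tilde\Gamma'\tilde\digamma'$ and the smooth terms are $O(\gamma+\eta)$ directly.

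Now insert the minimizers. For the first inequality, take the minimizer $\psi_0$ of $E_{m,l}$. It is explicitly $u\phi_1T_\pm$ from Lemma \ref{Lemma: Lambda(lambda)} and is Lipschitz with constants depending only on $u,\theta,\lambda$, so it is in $W^{1,\infty}$. For the reverse inequality, take the minimizer $\psi_{\gamma,\eta}$ and invoke Lemma \ref{lemma: boundedness of psi_{gamma,mu}}. This requires $\lambda_{m,\gamma,l,\eta}<0$. If instead it is nonnegative, we compare with the lower-bound argument of that lemma, or restrict to the regime where the first inequality already forces negativity; this regime is the one used later.

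The main obstacle is making the constant independent of $M$ while also controlling the minimizer when $\lambda$ is not negative. The linear factors $|m|$, $|l|$ appear explicitly in the singular terms, but the $W^{1,\infty}$ bounds and the $O(\gamma^2 s^2)$ Taylor errors could depend on $M$. This is why $\delta$ must be chosen depending on $M$, so that $M\delta$ is small enough for the bounds of Lemma \ref{lemma: boundedness of psi_{gamma,mu}} to hold uniformly.
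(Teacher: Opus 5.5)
Your argument follows the paper's proof: insert the minimizer of each functional into the other, rescale to restore the normalization (the paper's $(1-A^2)\lambda_{m,l}$), and split the concentrated terms. The $|\psi(0)|^2$ part cancels $\f{\Lambda_{m,l}}{\theta(0)}|\psi(0)|^2$ via \eqref{eq: Gamma and digamma's condition}--\eqref{eq: Gamma and digamma's condition2}, and the remainder is controlled by the $W^{1,\infty}$ bound (the paper's $K_2,K_3,K_4$).

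\textbf{Bookkeeping slips.} Two slips need fixing. First, $\tilde\Gamma'=m\gamma\theta\Gamma(y/\gamma)$, so $\tilde\Gamma''=m\theta\Gamma'(y/\gamma)+m\gamma\theta'\Gamma(y/\gamma)$ carries no factor $\gamma^{-1}$. Second, the energy density is $\f{u''}{u\theta}-\f{u'\theta'}{u\theta^2}$, so the concentrated parts are $\f{\tilde\Gamma''}{u\theta}\approx\f{m\Gamma'(y/\gamma)}{u}$ and $-\f{u'\tilde\digamma'}{u\theta^2}\approx-\f{l(u')^2\digamma'(y/\eta)}{u\theta^2}$. With your $\gamma^{-1}$, the substitution $y=\gamma s$ would give $\gamma^{-1}\int\Gamma'(s)/s\,ds$, and the Lipschitz remainder would be of size $\|\Gamma'\|_{L^1}=O(1)$, not $O(\gamma)$. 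With the corrected expressions you get exactly $-\f{\Lambda_{m,l}}{\theta(0)}|\psi(0)|^2$ plus $O(\gamma)$ and $O(\eta)$ remainders.

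\textbf{The clause ``$C$ independent of $M$''.} Your remedy does not deliver this. Choosing $\delta=\delta(M)$ only keeps $u_{m,\gamma}$ and $\theta_{l,\eta}$ comparable to $u$ and $\theta$. It does not control $|\lambda_{m,l}|$ (which multiplies $|1-A^2|\lesssim|l\eta|$), nor $\|\partial_y\psi\|_{L^2}$, nor the $W^{1,\infty}$ norm of the minimizers, and all three grow with $\Lambda_{m,l}$. Moreover, the terms coming from $u_{m,\gamma}^{-1}-u^{-1}$ and $\theta_{l,\eta}^{-2}-\theta^{-2}$ multiply concentrated coefficients of size $|m|+|l|$. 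So your argument gives $C=C(M)$; so does the paper's, which only asserts independence.

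In fact the clause is false. Take $\theta\equiv1$, $u=y$, $l=0$, and $-\Gamma'(s)/s\ge0$, so that $\Lambda=m$. The normalized ground state of $-\partial_y^2-\Lambda\delta(0)$ is $|\psi|^2\approx\kappa e^{-2\kappa|y|}$ with $\kappa\approx\Lambda/2$. Smearing the point mass over a scale $\gamma\ll1/\Lambda$ then raises the eigenvalue by roughly $\Lambda\cdot\kappa^2\gamma\sim\Lambda^3\gamma$, which is $\sim M^2|m\gamma|$ when $m=M$. State and prove the lemma with $C=C(M)$. That is all the proof of Theorem~\ref{Thm: travel-wave} uses, since there $l$ is fixed and $m\in[m_2,m_1]$.

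\textbf{The sign of $\lambda$.} You are right that Lemma~\ref{lemma: boundedness of psi_{gamma,mu}} assumes $\lambda_{m,\gamma,l,\eta}<0$, and the paper uses it without checking. However, restricting to a favorable regime does not prove the statement as posed. A cleaner fix uses the Euler--Lagrange equation
$\partial_y\big(\theta_{l,\eta}^{-1}\partial_y\psi\big)=\big(W_{\gamma,\eta}-\lambda\,\theta_{l,\eta}^{-1}\big)\psi$.
Combined with the uniform $L^1$ bound on $W_{\gamma,\eta}$ (already used in the paper) and the $H^1$ bound, it gives $\|\psi\|_{W^{1,\infty}}\le C(M)$ for either sign of $\lambda$. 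The same argument covers your $\psi_0$ when $\lambda_{m,l}>0$, where the representation $u\phi_1T_\pm$ may break down because $\phi_1$ can vanish.
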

	
	\begin{proof}
		Let $\psi_{\gamma,\eta} \in \Omega_{l, \eta}$ be a minimizer such that
		\beq\nonumber
		\lambda_{m,\gamma,l,\eta} = E_{m,\gamma,l,\eta}[\psi_{\gamma,\eta}].
		\eeq 
		Set $\tilde{\psi}_{\gamma,\mu} = A^{-1} \psi_{\gamma,\eta} $ such that $\tilde{\psi}_{\gamma,\mu} \in \Omega_\theta$. For the constant $A$, we have the estimate
		\beq\bali\nonumber
		A^2 - 1 =& \int_{-1}^1 \left(\f{1}{\theta} - \f{1}{\theta_{l,\eta}}\right) |\psi_{\gamma,\eta}|^2 \dy 
		= \int_{-1}^1 \f{l \eta }{\theta(y) \theta_{l,\eta}(y)} \digamma\left(\f{y}{\eta}\right) |\psi_{\gamma,\eta}(y)|^2 \dy\\
		\leq& \left\|\f{l \eta }{\theta(y)} \digamma\left(\f{y}{\eta}\right)\right\|_{L^\infty(-1,1)} \int_{-1}^1 \f{1}{\theta_{l,\eta}}|\psi_{\gamma,\eta}|^2 \dy \leq C|l \eta|.
		\eali\eeq
		We proceed as follows:
		\beq\bali\nonumber
		A^2 \lambda_{m,l} \leq& A^2 E_{m,l}[\tilde{\psi}_{\gamma,\mu}] = \lambda_{m,\gamma,l,\eta} + E_{m,l}[\psi_{\gamma,\eta}] - E_{m,\gamma,l,\eta}[\psi_{\gamma,\eta}]\\
		=& \lambda_{m,\gamma,l,\eta} 
		+ \int_{-1}^1 \left(\f{1}{\theta} - \f{1}{\theta_{l,\eta}} \right) |\pa_y \psi_{\gamma,\eta}|^2 \dy \\
		&+ \int_{-1}^1 \left[ \f{1}{u}\left(\f{u'}{\theta}\right)' - \f{1}{u_{m,\gamma}} \left(\f{u_{m,\gamma}'}{\theta_{l,\eta}} \right)'\right] |\psi_{\gamma,\eta}|^2 \dy
		- \f{\Lambda}{\theta(0)}\psi_{\gamma,\eta}(0)^2\\
		=& \lambda_{m,\gamma,l,\eta} 
		+ \int_{-1}^1 \left(\f{1}{\theta} - \f{1}{\theta_{l,\eta}} \right) |\pa_y \psi_{\gamma,\eta}|^2 \dy\\
		&+ \int_{-1}^1 \left(\f{u_{m,\gamma}''\theta_{l,\eta} - u_{m,\gamma}'\theta_{l,\eta}'}{u \theta^2} - \f{u_{m,\gamma}''\theta_{l,\eta} - u_{m,\gamma}'\theta_{l,\eta}'}{u_{m,\gamma} \theta_{l,\eta}^2}\right)|\psi_{\gamma,\eta}|^2 \dy\\
		&+ \int_{-1}^1\left[ \left(\f{u''\theta - u'\theta'}{u \theta^2} - \f{u_{m,\gamma}''\theta_{l,\eta} - u_{m,\gamma}'\theta_{l,\eta}'}{u \theta^2} \right)|\psi_{\gamma,\eta}|^2 - \f{\Lambda}{\theta(0)} \psi_{\gamma,\eta}(0)^2 \delta(0) \right] \dy\\
		:=& \lambda_{m,\gamma,l,\eta} + I + J + K.
		\eali\eeq
		We estimate the terms $I$, $J$, and $K$ separately.
		For the term $I$, applying the Gagliardo-Nirenberg inequality yields
		\beq\bali\nonumber
		\int_{-1}^1 \f{1}{\theta_{l,\eta}} |\pa_y \psi_{\gamma,\eta}|^2 \dy =& \lambda_{m,\gamma,l,\eta} - \int_{-1}^1 \f{1}{u_{m,\gamma}} \left(\f{u_{m,\gamma}'}{\theta_{l,\eta}} \right)' |\psi_{\gamma,\eta}|^2 \dy\\
		\leq& \lambda_{m,\gamma,l,\eta} + C \left\|\f{1}{u_{m,\gamma}} \left(\f{u_{m,\gamma}'}{\theta_{l,\eta}} \right)'\right\|_{L^1} \|\psi_{\gamma,\eta}\|_{L^\infty}^2\\
		\leq& \lambda_{m,\gamma,l,\eta} + C \|\psi_{\gamma,\eta}\|_{L^2} \|\pa_y \psi_{\gamma,\eta}\|_{L^2}\\
		\leq& \lambda_{m,\gamma,l,\eta} + C \left( \int_{-1}^1 \f{1}{\theta_{l,\eta}} |\pa_y \psi_{\gamma,\eta}|^2 \dy \right)^\f{1}{2}.
		\eali\eeq
		Given that $\gamma, \eta$ are small and $m, l$ are bounded, $\lambda_{m,\gamma,l,\eta}$ is bounded from below. It follows that
		\beq\nonumber
		\int_{-1}^1 \f{1}{\theta_{l,\eta}} |\pa_y \psi_{\gamma,\eta}|^2 \dy \leq C.
		\eeq
		Thus,
		\beq\nonumber
		I = l \eta \int_{-1}^1 \f{\digamma\left(\f{y}{\eta}\right)}{\theta(y) }  \f{1}{\theta_{l,\eta}(y)} |\pa_y \psi_{\gamma,\eta}(y)|^2 \dy \leq C |l \eta|.
		\eeq
		For the term $J$, noting that
		\beq\bali\nonumber
		\f{1}{u \theta^2} - \f{1}{u_{m,\gamma} \theta_{l,\eta}^2} = \f{1}{u \theta^2} \f{u_{m,\gamma} \theta_{l,\eta}^2 - u \theta^2}{u_{m,\gamma} \theta_{l,\eta}^2} \leq \f{C}{u \theta^2}\left(|m \gamma| + |l \eta|\right),
		\eali\eeq
		we have
		\beq\bali\nonumber
		J \leq& C \left(|m \gamma| + |l \eta|\right) \int_{-1}^1 \left|\f{u_{m,\gamma}''\theta_{l,\eta} - u_{m,\gamma}'\theta_{l,\eta}'}{u \theta^2} \right| |\psi_{\gamma,\eta}|^2 \dy\\
		\leq& C \left(|m \gamma| + |l \eta|\right) \left\|\f{u_{m,\gamma}''\theta_{l,\eta} - u_{m,\gamma}'\theta_{l,\eta}'}{u \theta^2}\right\|_{L^1} \|\psi_{\gamma,\eta}\|_{L^2} \|\pa_y \psi_{\gamma,\eta}\|_{L^2}\\
		\leq& C \left(|m \gamma| + |l \eta|\right).
		\eali\eeq
		For the term $K$, using condition \eqref{eq: Gamma and digamma's condition}, we deduce that
		\beq\bali\nonumber
		\f{\Lambda}{\theta(0)}\psi_{\gamma,\eta}(0)^2 =& \int_{\BR} \left(-\f{m}{u'(0)y} \Gamma'(y) + \f{l u'(0)}{\theta(0)^2 y} \digamma'(y)\right) \psi_{\gamma,\eta}(0)^2 \dy\\
		=& -\int_{-1}^1 \f{m}{u'(0)y} \Gamma'\left(\f{y}{\gamma}\right) \psi_{\gamma,\eta}(0)^2 \dy + \int_{-1}^1 \f{l u'(0)}{\theta(0)^2 y} \digamma'\left(\f{y}{\gamma}\right) \psi_{\gamma,\eta}(0)^2 \dy\\
		&+ \left( -\f{m}{u'(0)}\int_{\BR \backslash [-\f{1}{\gamma}, \f{1}{\gamma}]} \f{\Gamma'(y)}{y} \dy + \f{l u'(0)}{\theta(0)^2} \int_{\BR \backslash [-\f{1}{\eta}, \f{1}{\eta}]} \f{\digamma'(y)}{y} \dy\right) \psi_{\gamma,\eta}(0)^2.
		\eali\eeq
		We decompose $K$ as
		\beq\bali\nonumber
		K =& -\int_{-1}^1 \f{1}{u \theta^2}\left[u'' \tilde{\digamma} + \tilde{\Gamma}'' \tilde{\digamma} - \tilde{\Gamma}' \theta' - \tilde{\Gamma}' \tilde{\digamma}' + m \gamma \theta \theta' \Gamma\left(\f{y}{\gamma}\right) - l \eta u' u'' \digamma\left(\f{y}{\eta}\right) \right] |\psi_{\gamma,\eta}|^2 \dy \\
		& - m \int_{-1}^1 \f{\Gamma'\left(\f{y}{\gamma}\right)}{y} \left[\f{y}{u(y)} \psi_{\gamma,\eta}(y)^2 - \f{1}{u'(0)} \psi_{\gamma,\eta}(0)^2 \right] \dy \\
		& + l \int_{-1}^1 \f{\digamma'\left(\f{y}{\gamma}\right)}{y} \left[\f{u'(y)^2 y}{u(y) \theta(y)^2} \psi_{\gamma,\eta}(y)^2 - \f{u'(0)}{\theta(0)^2} \psi_{\gamma,\eta}(0)^2 \right] \dy\\
		&- \left[ -\f{m}{u'(0)}\int_{\BR \backslash [-\f{1}{\gamma}, \f{1}{\gamma}]} \f{\Gamma'(y)}{y} \dy + \f{l u'(0)}{\theta(0)^2} \int_{\BR \backslash [-\f{1}{\eta}, \f{1}{\eta}]} \f{\digamma'(y)}{y} \dy\right] \psi_{\gamma,\eta}(0)^2\\
		:=& K_1 + K_2 + K_3 + K_4.
		\eali\eeq
		The term $K_1$ can be estimated by direct calculation:
		\beq\bali\nonumber
		K_1 \leq& \left\|\f{1}{u \theta^2}\left[u'' \tilde{\digamma} + \tilde{\Gamma}'' \tilde{\digamma} - \tilde{\Gamma}' \theta' - \tilde{\Gamma}' \tilde{\digamma}' + m \gamma \theta \theta' \Gamma\left(\f{y}{\gamma}\right) - l \eta u' u'' \digamma\left(\f{y}{\eta}\right) \right]\right\|_{L^1} \|\psi_{\gamma,\eta}\|_{L^\infty}^2\\
		\leq&  C \left(|m \gamma| + |l \eta|\right),
		\eali\eeq
		while the $K_2$ and $K_3$ are bounded by
		\beq\bali\nonumber
		K_2 \leq& |m| \left\|\f{y}{u(y)} \psi_{\gamma,\eta}(y)^2\right\|_{W^{1,\infty}} \int_{-1}^1 \left|\Gamma'\left(\f{y}{\gamma}\right)\right| \dy \leq C |m \gamma|,\\
		K_3 \leq& |l| \left\|\f{u'(y)^2 y}{u(y) \theta(y)^2} \psi_{\gamma,\eta}(y)^2\right\|_{W^{1,\infty}} \int_{-1}^1 \left|\digamma'\left(\f{y}{\eta}\right)\right| \dy \leq C |l \eta|.
		\eali\eeq
		From the conditions on $\Gamma$ and $\digamma$, the term $K_4$ is controlled by
		\beq\bali\nonumber
		&\left| -\f{m}{u'(0)}\int_{\BR \backslash [-\f{1}{\gamma}, \f{1}{\gamma}]} \f{\Gamma'(y)}{y} \dy + \f{l u'(0)}{\theta(0)^2} \int_{\BR \backslash [-\f{1}{\eta}, \f{1}{\eta}]} \f{\digamma'(y)}{y} \dy\right| |\psi_{\gamma,\eta}(0)|^2\\
		& \qquad \leq C \left(|m \gamma| + |l \eta|\right) \|\psi_{\gamma,\eta}\|_{L^\infty}^2 \leq  C \left(|m \gamma| + |l \eta|\right).
		\eali\eeq
		Combining these estimates, we obtain $K\leq C \left(|m \gamma| + |l \eta|\right)$. Thus,
		\beq\bali\nonumber
		\lambda_{m,l} - \lambda_{m,\gamma,l,\eta} 
		\leq& (1 - A^2)\lambda_{m,l} + I + J + K \leq C \left(|m \gamma| + |l \eta|\right).
		\eali\eeq
		Similarly, we can show that
		\beq\bali\nonumber
		\lambda_{m,\gamma,l,\eta} - \lambda_{m,l} \leq C \left(|m \gamma| + |l \eta|\right).
		\eali\eeq
		This concludes the proof.
	\end{proof}

	\begin{lem}
		For any $M > 0$, there exist constants $C, c, \delta > 0$ such that if $|m|, |\tilde{m}|, |l|, |\tilde{l}| \leq M$, $\gamma, \eta \leq \delta$ and $\Lambda_{m - \tilde{m}, l - \tilde{l}} \geq 0$ , then
		\beq\nonumber
		\lambda_{m,\gamma,l,\eta} - \lambda_{\tilde{m},\gamma,\tilde{l},\eta} 
		\leq - c \Lambda_{m - \tilde{m}, l - \tilde{l}} + C \left( \gamma |m - \tilde{m}| + \eta |l - \tilde{l}| \right).
		\eeq
		Consequently, for any $m_1 < m_2$ and $l_1 < l_2$, we have
		\beq\label{monotonicity of lambda with respect to m and l}
		\lambda_{m_2,\gamma,l,\eta} < \lambda_{m_1,\gamma,l,\eta}, \quad \lambda_{m,\gamma,l_2,\eta} < \lambda_{m,\gamma,l_1,\eta}
		\eeq
	\end{lem}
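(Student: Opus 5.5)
We compare the two minimal eigenvalues variationally, testing the energy of the larger-parameter operator against the minimizer of the smaller-parameter one. Let $\psi = \psi_{\tilde m,\gamma,\tilde l,\eta}$ be a minimizer for $\lambda_{\tilde m,\gamma,\tilde l,\eta}$. By Lemma \ref{lemma: boundedness of psi_{gamma,mu}} we have $|\psi(0)|\ge 1/C$ and $\|\psi\|_{W^{1,\infty}}\le C$, uniformly in $|\tilde m|,|\tilde l|\le M$ and $\gamma,\eta\le\delta$; this needs $\lambda<0$, otherwise we reduce to that case below. Put $\tilde\psi = A^{-1}\psi$ with $A$ chosen so that $\tilde\psi\in\Omega_{l,\eta}$. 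As in Lemma \ref{lemma for smallness of gamma and eta}, $\theta_{l,\eta}-\theta_{\tilde l,\eta} = (l-\tilde l)\eta u'\digamma(y/\eta)$, which gives $|A^2-1|\le C\eta|l-\tilde l|$. Then
\begin{align*}
A^2\lambda_{m,\gamma,l,\eta} \le E_{m,\gamma,l,\eta}[\psi] = \lambda_{\tilde m,\gamma,\tilde l,\eta} + \big(E_{m,\gamma,l,\eta}[\psi]-E_{\tilde m,\gamma,\tilde l,\eta}[\psi]\big),
\end{align*}
so the whole task is to show that the energy difference equals $-\frac{\Lambda_{m-\tilde m,l-\tilde l}}{\theta(0)}\psi(0)^2$ plus an error $O(\gamma|m-\tilde m|+\eta|l-\tilde l|)$.

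To get this, we split the energy difference exactly as in the proof of Lemma \ref{lemma for smallness of gamma and eta}, now with increments $m-\tilde m$ and $l-\tilde l$ in place of $m$ and $l$. All the explicit perturbations are linear in these increments: $u_{m,\gamma}-u_{\tilde m,\gamma}=(m-\tilde m)\gamma\int_0^y\theta\Gamma(\cdot/\gamma)$, and similarly for $\theta$.

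\textbf{Gradient term.} $\int(\theta_{l,\eta}^{-1}-\theta_{\tilde l,\eta}^{-1})|\psi'|^2$ is bounded by $C\eta|l-\tilde l|$.

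\textbf{Potential term.} The dominant parts are the singular pieces $-(m-\tilde m)\int\frac{\Gamma'(y/\gamma)}{\gamma u}\psi^2$ and $(l-\tilde l)\int\frac{u'^2\digamma'(y/\eta)}{\eta u\theta^2}\psi^2$. After rescaling, and subtracting their values at $y=0$ as in $K_2,K_3$, condition \eqref{eq: Gamma and digamma's condition} together with the tail bound \eqref{eq: Gamma and digamma's condition2} produces
\begin{align*}
-\Big(\frac{\theta(0)(m-\tilde m)}{u'(0)}+\frac{u'(0)(l-\tilde l)}{\theta(0)}\Big)\frac{\psi(0)^2}{\theta(0)}.
\end{align*}
The bounds for $K_2,K_3,K_4$ use the $W^{1,\infty}$ control of $y\psi^2/u$. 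The remaining pieces, cross terms $\tilde\Gamma'\tilde\digamma'$ and terms like $K_1$, are bounded as before by $C(\gamma|m-\tilde m|+\eta|l-\tilde l|)$.

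There is one point to watch here: the denominators involve $u_{m,\gamma}$ versus $u_{\tilde m,\gamma}$ and $\theta_{l,\eta}$ versus $\theta_{\tilde l,\eta}$ at the same $\gamma,\eta$. Their ratio differs from $1$ by $O(\gamma|m-\tilde m|)$ uniformly, since $u_{m,\gamma}(y)/y$ is bounded below. This is what keeps the error linear in the differences rather than in $M$.

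\textbf{Combining.} Using $\psi(0)^2\ge 1/C^2$, we get
\begin{align*}
\lambda_{m,\gamma,l,\eta}\le A^{-2}\big(\lambda_{\tilde m,\gamma,\tilde l,\eta}-c'\Lambda_{m-\tilde m,l-\tilde l}+C(\gamma|m-\tilde m|+\eta|l-\tilde l|)\big).
\end{align*}
The factor $A^{-2}$ is absorbed because $|\lambda|\le C$ and $\Lambda\le CM$; this costs only $C\eta|l-\tilde l|$ and slightly shrinks $c$. If $\lambda_{\tilde m,\gamma,\tilde l,\eta}\ge0$, we instead use Lemma \ref{lemma for smallness of gamma and eta} and Lemma \ref{Lemma: Lambda(lambda)}, or simply test with any fixed function whose value at $0$ is of size $1$, for which the same expansion holds.

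\textbf{Monotonicity \eqref{monotonicity of lambda with respect to m and l}.} Take $\tilde m=m_1$, $m=m_2$, $l=\tilde l$. Then $\Lambda_{m_2-m_1,0}=\frac{\theta(0)}{u'(0)}(m_2-m_1)>0$, and the bound reads $\le -(c\,\theta(0)/u'(0)-C\gamma)(m_2-m_1)<0$ once $\delta$ is small. The same argument applies in $l$.

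\textbf{Main obstacle.} The hardest step is securing the uniform lower bound on $|\psi(0)|$ and the $W^{1,\infty}$ bound for the test minimizer, which justify extracting $\psi(0)^2$ from the singular integrals with errors proportional to the parameter differences. I would rely on Lemma \ref{lemma: boundedness of psi_{gamma,mu}} for both.
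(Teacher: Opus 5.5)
Your argument is the paper's: test $E_{m,\gamma,l,\eta}$ on the rescaled minimizer of $E_{\tilde m,\gamma,\tilde l,\eta}$, bound the gradient difference by $C\eta|l-\tilde l|$, extract $-\frac{\Lambda_{m-\tilde m,l-\tilde l}}{\theta(0)}\psi(0)^2$ from the singular parts of the potential via \eqref{eq: Gamma and digamma's condition}, and conclude with $|\psi(0)|\ge 1/C$. The paper differentiates the potential in $m$ and $l$ instead of expanding as in Lemma \ref{lemma for smallness of gamma and eta}, which is only cosmetic.

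There are two slips you should fix. First, the singular pieces carry no factor $1/\gamma$ or $1/\eta$, and their signs are opposite to yours: to leading order they are $(m-\tilde m)\int\frac{\Gamma'(y/\gamma)}{u}\psi^2$ and $-(l-\tilde l)\int\frac{(u')^2\digamma'(y/\eta)}{u\theta^2}\psi^2$, although your stated limit is the right one. Second, your fallback for $\lambda_{\tilde m,\gamma,\tilde l,\eta}\ge 0$ fails. A fixed test function only bounds $\lambda_{m,\gamma,l,\eta}$ from above and gives no lower bound on $\lambda_{\tilde m,\gamma,\tilde l,\eta}$. Lemma \ref{lemma for smallness of gamma and eta} only gives errors of size $M(\gamma+\eta)$, not proportional to $|m-\tilde m|$ or $|l-\tilde l|$. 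The paper's proof, through Lemma \ref{lemma: boundedness of psi_{gamma,mu}} and Proposition \ref{solution of phi = u phi_1}, likewise tacitly assumes a negative eigenvalue.
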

	
	\begin{proof}
		Recall $\Lambda_{m - \tilde{m}, l - \tilde{l}}$ is defined in \eqref{definition of Lambda_m,l}. Using the definition \eqref{expression of lambda_{m,l}} of $\lambda_{m,\gamma,l,\eta}$, we estimate the difference between the eigenvalues by comparing their corresponding energies. For convenience, we write
		\beq\nonumber
		u_{m,\gamma} = u + m \bar{\Gamma}_\gamma, \quad \theta_{l,\eta} = \theta + l \bar{\digamma}_\eta.
		\eeq
		Let $\psi_{\tilde{m},\tilde{l}} \in \Omega_{\tilde{l}, \eta}$ be a minimizer of $E_{\tilde{m},\gamma,\tilde{l},\eta}$. Define the scaled function $\tilde{\psi}_{m,\tilde{l}} = A^{-1} \psi_{\tilde{m},\tilde{l}}$ such that $\tilde{\psi}_{m,\tilde{l}}\in \Omega_{l, \eta}$. We have the estimate 
		$$|A^2 - 1| \leq C \eta |l - \tilde{l}|.$$
		Then,
		\beq\bali\nonumber
		& A^2 \lambda_{m, \gamma, l, \eta} - \lambda_{\tilde{m}, \gamma, \tilde{l}, \eta} \leq E_{m, \gamma, l, \eta} [\psi_{\tilde{m}, \tilde{l}}] - E_{\tilde{m}, \gamma, \tilde{l}, \eta} [\psi_{\tilde{m}, \tilde{l}}]\\
		=& \int_{-1}^1 \left( \f{1}{\theta_{l,\eta}} - \f{1}{\theta_{\tilde{l},\eta}}\right) |\pa_y \psi_{\tilde{m}, \tilde{l}}|^2 \dy 
		+ \int_{-1}^1 \left[ \f{1}{u_{m, \gamma}} \left( \f{u_{m, \gamma}'}{\theta_{l,\eta}} \right)' - \f{1}{u_{\tilde{m},\gamma}} \left( \f{ u_{\tilde{m}, \gamma}' }{ \theta_{l, \eta} } \right)' \right] |\psi_{\tilde{m},\tilde{l}}|^2 \dy\\
		&+ \int_{-1}^1 \left[ \f{1}{u_{\tilde{m},\gamma}} \left( \f{ u_{\tilde{m}, \gamma}' }{ \theta_{l, \eta} } \right)' - \f{1}{u_{\tilde{m},\gamma}} \left(\f{ u_{\tilde{m}, \gamma}' }{ \theta_{\tilde{l}, \eta} } \right)' \right] |\psi_{\tilde{m},\tilde{l}}|^2 \dy \\
		:=& I + J + K.
		\eali\eeq
		The first term is directly bounded by
		\beq\bali\label{equation of difference term I}
		I = (\tilde{l} - l) \int_{-1}^1 \f{\tilde{\digamma}_\eta}{\theta_{l,\eta} \theta_{\tilde{l},\eta}}  |\pa_y \psi_{\tilde{m},\tilde{l}}|^2 \dy \leq C\eta |l - \tilde{l}|.
		\eali\eeq
		For the second term, we compute the derivative with respect to $m$ of the integrand:
		\beq\bali\nonumber
		\pa_m \left[ \f{1}{u_{m,\gamma}} \left(\f{u_{m,\gamma}'}{\theta_{l,\eta}} \right)' \right] 
		= \f{1}{u_{m,\gamma}^2 \theta_{l,\eta}^2} \left[u \theta_{l,\eta} \bar{\Gamma}_\gamma'' - u'' \theta_{l, \eta} \bar{\Gamma}_\gamma - \left(\bar{\Gamma}_\gamma' u - \bar{\Gamma}_\gamma u'\right) \theta_{l, \eta}' \right].
		\eali\eeq 
		Using the estimates established in Lemma \ref{lemma for smallness of gamma and eta}, we obtain
		\beq\bali\nonumber
		\int_{-1}^1 \f{u \theta_{l,\eta} \bar{\Gamma}_\gamma''}{u_{m,\gamma}^2 \theta_{l,\eta}^2} |\psi|^2 \dy + \f{1}{u'(0)} |\psi(0)|^2 \leq C \gamma,
		\eali\eeq
		and
		\beq\bali\nonumber
		\int_{-1}^1 \f{u'' \theta_{l, \eta} \bar{\Gamma}_\gamma}{u_{m,\gamma}^2 \theta_{l,\eta}^2} |\psi|^2 \dy 
		\leq& \left\|\f{u'' \theta_{l, \eta} \bar{\Gamma}_\gamma}{u_{m,\gamma}^2 \theta_{l,\eta}^2} \right\|_{L^\infty} \|\psi\|_{L^2}^2 
		\leq C \gamma,\\
		\int_{-1}^1 \f{\left(\bar{\Gamma}_\gamma' u - \bar{\Gamma}_\gamma u'\right) \theta_{l, \eta}'}{u_{m,\gamma}^2 \theta_{l,\eta}^2} |\psi|^2 \dy
		\leq& \left\| \f{\bar{\Gamma}_\gamma' u - \bar{\Gamma}_\gamma u'}{u_{m,\gamma} \theta_{l,\eta}^2} \right\|_{L^\infty} \left\|\f{\theta_{l, \eta}'}{u_{m,\gamma}}\right\|_{L^1} \|\psi\|_{L^\infty}^2 \leq C\gamma.
		\eali\eeq
		Thus,
		\beq\bali\label{equation of difference of lambda up to m}
		J \leq& -\f{m - \tilde{m}}{u'(0)} |\psi_{\tilde{m},l}(0)|^2 + C \gamma |m - \tilde{m}|.
		\eali\eeq
		We now consider the third term. We compute the derivative with respect to $l$ as follows:
		\beq\bali\nonumber
		\pa_l \left[ \f{1}{u_{m,\gamma}} \left(\f{u_{m,\gamma}'}{\theta_{l,\eta}} \right)' \right] 
		=& \f{\left(l \tilde{\digamma}_\eta - \theta\right) u_{m,\gamma}' \tilde{\digamma}_\eta'}{u_{m,\gamma} \theta_{l,\eta}^3} 
		+ \f{2 u_{m,\gamma}' \theta' \tilde{\digamma}_\eta }{u_{m,\gamma} \theta_{l,\eta}^3}  - \f{\left(l \tilde{\digamma}_\eta^2 + \theta \tilde{\digamma}_\eta \right) u_{m,\gamma}''}{u_{m,\gamma} \theta_{l,\eta}^3}.
		\eali\eeq 
		Using the estimates from Lemma \ref{lemma for smallness of gamma and eta}, we find
		\beq\bali\nonumber
		\int_{-1}^1 \f{\left(l \tilde{\digamma}_\eta - \theta\right) u_{m,\gamma}' \tilde{\digamma}_\eta'}{u_{m,\gamma} \theta_{l,\eta}^3} |\psi|^2 \dy 
		+ \f{u'(0)}{\theta(0)^2} |\psi(0)|^2 \leq C\eta,
		\eali\eeq
		and
		\beq\bali\nonumber
		\int_{-1}^1 \f{2 u_{m,\gamma}' \theta' \tilde{\digamma}_\eta }{u_{m,\gamma} \theta_{l,\eta}^3}  |\psi|^2 \dy 
		\leq& \left\|\f{2 u_{m,\gamma}' \theta' \tilde{\digamma}_\eta }{u_{m,\gamma} \theta_{l,\eta}^3} \right\|_{L^\infty} \|\psi\|_{L^2}^2 \leq C \eta,\\
		\int_{-1}^1 \f{\left(l \tilde{\digamma}_\eta^2 + \theta \tilde{\digamma}_\eta \right) u_{m,\gamma}''}{u_{m,\gamma} \theta_{l,\eta}^3} |\psi|^2 \dy
		\leq& \left\|\f{u_{m,\gamma}''}{u_{m,\gamma}} \right\|_{L^1} \left\|\f{l \tilde{\digamma}_\eta^2 + \theta \tilde{\digamma}_\eta }{ \theta_{l,\eta}^3} \right\|_{L^\infty} \|\psi\|_{L^\infty}^2 \leq C \eta.
		\eali\eeq
		Thus,
		\beq\label{equation of difference of lambda up to l}
		K \leq - \f{u'(0)}{\theta(0)^2} (l - \tilde{l}) | \psi_{\tilde{m},\tilde{l}}(0)|^2 + C\eta |l - \tilde{l}|.
		\eeq
		Combining \eqref{equation of difference term I}-\eqref{equation of difference of lambda up to l} and Proposition \ref{solution of phi = u phi_1}, we obtain
		\beq\bali\nonumber
		\lambda_{m,\gamma,l,\eta} - \lambda_{\tilde{m},\gamma,\tilde{l},\eta} \leq&  -\f{|\psi_{\tilde{m},l}(0)|^2}{\theta(0)} \Lambda_{m - \tilde{m}, l - \tilde{l}}  + C \eta |l - \tilde{l}| + C \gamma |m - \tilde{m}| \\
		\leq& - c \Lambda_{m - \tilde{m}, l - \tilde{l}} + C \eta |l - \tilde{l}| + C \gamma |m - \tilde{m}|.
		\eali\eeq
		For sufficiently small $\eta$ and $\gamma$, the inequality \eqref{monotonicity of lambda with respect to m and l} follows.
	\end{proof}

	\subsection{The Cat's-eyes structure near the modified shear flow}
	Based on the spectral properties of the perturbed, distorted Rayleigh operator, we complete the proof of the first theorem in this subsection. To ensure the Long equation is well-defined, we first construct modified profiles for $(u, \theta)$. Let $\chi_\delta \in C_c^\infty(\mathbb{R})$ be a smooth cut-off function satisfying
	$\chi_\delta \equiv 1$ on $(-\delta, \delta)$ and ${\rm supp}\, \chi_\delta \subset (-2\delta, 2\delta)$.
	Recall that the modified velocity $u^\delta$ and density $\theta^\delta$ defined by
	\beq
    \begin{aligned}
	u^\delta =& \chi_\delta(y) u'(0)y + \left(1 - \chi_\delta(y)\right) u(y),\\
	\theta^\delta =& \chi_\delta(y) \theta(0) + \left(1 - \chi_\delta(y)\right) \theta(y),
    \end{aligned}
	\eeq
	and the perturbed profiles are considered as follows:
	\beq\bali\nonumber
	u^\delta_{m, \gamma}(y) =& u^\delta(y) + m \gamma \int_0^y \theta^\delta(y') \Gamma(\f{y'}{\gamma}) \dy' ,\\
	\theta^\delta_{l, \eta}(y) =& \theta^\delta(y) + l \eta (u^\delta)'(y) \digamma(\f{y}{\eta}) ,
	\eali\eeq
	where $\Gamma$ and $\digamma$ satisfies condition \eqref{eq: Gamma and digamma's condition}. The distance of $(u^\delta_{m, \gamma}, \theta^\delta_{l, \eta})$ to the original profiles $(u, \theta)$ is quantified in the following lemma.
	
	\begin{lem}\label{lemma of distance between u,theta and u^delta, theta^delta}
		For $0 < \delta \ll 1$ and $k \in \BN$, there exists $C>0$ such that
		\beq\bali\label{equation of distance between u and u^delta}
		\| u^\delta - u\|_{H^{\f{5}{2} - s}} +\| \theta^\delta - \theta\|_{H^{\f{3}{2} - s}} \leq C \delta^s.
		\eali\eeq
		Furthermore, for the perturbed profiles, we have
		\beq\bali\nonumber
		\| u^\delta_{m, \gamma} - u^\delta \|_{H^{\f{5}{2} - s}} +\| \theta^\delta_{l, \eta} - \theta^\delta\|_{H^{\f{3}{2} - s}} \leq C \left(\delta^s + |m| \gamma^s + |l| \eta^s \right) .
		\eali\eeq
	\end{lem}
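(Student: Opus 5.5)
The estimate is a scaling computation for functions that change only on a window of width $\delta$ (resp. $\gamma$, $\eta$) around $y=0$. Throughout, $0<s$ is small, the norms are on $(-1,1)$, and $\Gamma,\digamma$ are smooth with enough decay that $\Gamma(\cdot/\gamma)$ and $\digamma(\cdot/\eta)$ behave like functions supported in a window of width $\gamma$, $\eta$. (If $\Gamma,\digamma$ are not compactly supported, the tails are handled by the same scaling bounds.)

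\emph{Step 1: the $\delta$-modification.} Write
\[
u^\delta-u=\chi_\delta(y)\bigl(u'(0)y-u(y)\bigr)=:\chi_\delta\, r_u,\qquad \theta^\delta-\theta=\chi_\delta\bigl(\theta(0)-\theta(y)\bigr)=:\chi_\delta\, r_\theta .
\]
By Taylor's formula and $u,\theta\in C^3$, we have $|\partial^j r_u|\lesssim |y|^{2-j}$ for $j\le 2$ and $|\partial^j r_\theta|\lesssim |y|^{1-j}$ for $j\le1$. Together with $|\partial^j\chi_\delta|\lesssim\delta^{-j}$ and the support condition $|y|\le 2\delta$, this gives
\[
\|\chi_\delta r_u\|_{H^k}\lesssim \delta^{5/2-k}\ (k=0,1,2,3),\qquad \|\chi_\delta r_\theta\|_{H^k}\lesssim\delta^{3/2-k}\ (k=0,1,2).
\]
For $k=3$ (resp. $k=2$) the top derivative falls partly on $u'''$ (resp. $\theta''$). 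These are only continuous, but the bound still holds because the $\chi_\delta$ factor gives an $L^2$ norm of order $\delta^{1/2}\le\delta^{-1/2}$. Interpolating between consecutive integers by the inequality $\|f\|_{H^{k+\alpha}}\le\|f\|_{H^k}^{1-\alpha}\|f\|_{H^{k+1}}^{\alpha}$ yields
\[
\|u^\delta-u\|_{H^{5/2-s}}\lesssim\delta^{s},\qquad \|\theta^\delta-\theta\|_{H^{3/2-s}}\lesssim\delta^{s}.
\]
This is exactly the first estimate of the lemma.

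\emph{Step 2: the $\gamma$- and $\eta$-perturbations.} For the velocity,
\[
u^\delta_{m,\gamma}-u^\delta=m\gamma\int_0^y\theta^\delta\,\Gamma(y'/\gamma)\,dy'.
\]
Its derivative is $m\gamma\,\theta^\delta\,\Gamma(y/\gamma)$, so it suffices to control $g_\gamma:=\gamma\,\theta^\delta\,\Gamma(\cdot/\gamma)$ in $H^{3/2-s}$. The primitive itself is trivially $O(|m|\gamma)$ in $L^2$. By scaling, $\|\gamma\,\Gamma(\cdot/\gamma)\|_{\dot H^{k}}\simeq \gamma^{1+1/2-k}$, which is $\gamma^{s}$ exactly when $k=3/2-s$. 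Multiplication by the smooth factor $\theta^\delta$ costs only lower-order terms, which come with higher powers of $\gamma$. If $\theta^\delta$'s derivatives are of size $\delta^{-j}$, one may first take $\gamma\le\delta$, or simply note that the statement allows the constant to depend on $\delta$.

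The density term is handled the same way. We have $\theta^\delta_{l,\eta}-\theta^\delta=l\,\eta\,(u^\delta)'\,\digamma(\cdot/\eta)$, and
\[
\|\eta\,\digamma(\cdot/\eta)\|_{\dot H^{3/2-s}}\simeq\eta^{1+1/2-(3/2-s)}=\eta^{s}.
\]
Multiplying by $(u^\delta)'\in C^2$ preserves the bound. Since $u^\delta$ is linear on $(-\delta,\delta)$, $(u^\delta)'$ is constant there, which makes this step even cleaner. Adding the two pieces and the $\delta^s$ from Step 1 via the triangle inequality gives
\[
C\bigl(\delta^s+|m|\gamma^s+|l|\eta^s\bigr).
\]

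\emph{Main obstacle.} The delicate point is the fractional-norm bookkeeping: the homogeneous scaling must not be spoiled by low-frequency or $L^2$ parts, and the limited $C^3$ regularity of $u,\theta$ must suffice at the top order. I would handle this by proving integer-order bounds $H^k$, $H^{k+1}$ explicitly and then interpolating, rather than computing fractional norms directly. Interpolation is the one place where the exponents must come out exactly $s$, and it uses nothing beyond what is stated above.
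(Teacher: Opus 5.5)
Your proposal is correct and follows essentially the paper's proof. The cutoff difference is bounded at integer orders ($\delta^{1/2}$ in $H^2$ and $\delta^{-1/2}$ in $H^3$ for $u$, one order lower for $\theta$) and then interpolated. The $\Gamma,\digamma$ perturbations are handled by the scaling $\|f(\cdot/\gamma)\|_{\dot H^{\sigma}}=\gamma^{1/2-\sigma}\|f\|_{\dot H^{\sigma}}$, which the paper applies one derivative higher, to $(u^\delta_{m,\gamma}-u^\delta)''=m\gamma(\theta^\delta)'\Gamma(\cdot/\gamma)+m\theta^\delta\Gamma'(\cdot/\gamma)$ in $H^{1/2-s}$.

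Drop your fallback of letting $C$ depend on $\delta$, since it would make the first estimate vacuous. In the differentiated form above only $\delta$-uniform quantities enter: the uniform Lipschitz bounds on $\theta^\delta$ and $(u^\delta)'$, and the uniform $H^{1/2-s}$ bounds on $(\theta^\delta)'$ and $(u^\delta)''$ from your Step 1. So neither $\gamma\le\delta$ nor a $\delta$-dependent constant is needed.
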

	
	\begin{proof}
		By the definition of $\chi_\delta$, we have $\| \chi_\delta\|_{C^k(-1,1)} \leq C \delta^{-k}$. We deduce  that 
		\beq\bali\nonumber
		\| u - u^\delta\|_{C^k} &= \| \chi_\delta(y) (u(y) - u'(0)y)\|_{C^k} \leq C \delta^{2-k}.
		\eali\eeq
		Taking $k = 2,3$, we obtain
		\beq\bali\nonumber 
		\|u - u^\delta\|_{H^2} \leq C \delta^\f{1}{2},\ \quad  \| u - u^\delta\|_{H^3} \leq C \delta^{-\f{1}{2}},.
		\eali\eeq
		Then \eqref{equation of distance between u and u^delta} follows from the interpolation inequality:
		\beq\bali\nonumber 
		\|u - u^\delta\|_{H^{\f{5}{2}-\tau}} \leq C \|u - u^\delta\|_{H^2}^{\f{1}{2} + \tau} \|u - u^\delta\|_{H^3} ^{\f{1}{2} - \tau} \leq C \delta^\tau.
		\eali\eeq
		For the perturbed profiles, we have
		\beq\bali\nonumber
		(u^\delta_{m, \gamma} - u^\delta)''(y) = m \gamma (\theta^\delta)'(y) \Gamma(\f{y}{\gamma} ) + m \theta^\delta(y) \Gamma'(\f{y}{\gamma} ),
		\eali\eeq
		so that 
		\beq\bali\nonumber
		\|u^\delta_{m, \gamma} - u^\delta\|_{H^{\f{5}{2} - \tau}} \leq& C |m| \left( \|\theta^\delta\|_{H^{\f{3}{2} - \tau}} \|\gamma \Gamma(\f{y}{\gamma}) \|_{L^\infty} + \|\theta^\delta\|_{L^\infty} \|\Gamma'(\f{y}{\gamma}) \|_{H^{\f{1}{2} - \tau}} \right)\\
		\leq& C |m|\delta^\tau + C |m| \gamma^\tau.
		\eali\eeq
		The inequalities for $\theta$ follow similarly, which completes the proof of the lemma.
	\end{proof}
	
	We introduce a horizontal scaling transformation 
	$$\zeta = k x, \quad \nabla_\zeta = (k \pa_\zeta, \pa_y), \quad \Delta_\zeta = k^2 \pa_\zeta^2 + \pa_y^2,$$
	and define a functional $\mathcal{F}$ as
	\beq\bali\nonumber
	\CF(k, \Psi) =& \Delta_\zeta \Psi - \f{G'(\Psi)}{2G(\Psi)} |\nabla_\zeta \Psi|^2 - L(\Psi)\\
	=& k^2 \left(\pa_\zeta^2 \Psi - \f{G'(\Psi)}{2G(\Psi)} |\pa_\zeta \Psi|^2 \right) + \pa_y^2 \Psi - \f{G'(\Psi)}{2G(\Psi)} |\pa_y \Psi|^2 - L(\Psi).
	\eali\eeq
	For the shear flow given by
	\beq\nonumber
	\uu = (u^\delta_{m, \gamma}(y), 0), \quad \theta = \theta^\delta_{l, \eta}(y), \quad \psi^\delta_{m, \gamma}(y) = -\int_0^y u^\delta_{m, \gamma}(z) \dz.
	\eeq
	The functions $G$ and $L$ can be expressed by
	\beq\bali\label{expression of G and L}
	G(\psi^\delta_{m, \gamma}) =& \theta^\delta_{l, \eta}(y),\\
	L(\psi^\delta_{m, \gamma}) =&  -\left( u^\delta_{m, \gamma} \right)^\prime + \f{\left( \theta^\delta_{l, \eta} \right)'}{2 \theta^\delta_{l, \eta}} u^\delta_{m, \gamma}.
	\eali\eeq
	Due to the monotonicity of $u^\delta_{m,\gamma}$, the stream function $\psi^\delta_{m, \gamma}$ is monotonic in both $(-1, 0)$ and $(0, 1)$. Thus, we can define $G$ and $L$ piecewise and extend the definition \eqref{expression of G and L} to a neighborhood of $\psi^\delta_{m, \gamma}$.
	
	\begin{Def}
		Let $\psi^\delta_{m, \gamma}$ be the stream function generated by $u^\delta_{m, \gamma}$ that
		\beq\nonumber
		\psi^\delta_{m, \gamma}(y) = -\int_0^y u^\delta_{m, \gamma}(z) \dz.
		\eeq
		Define functions $\tilde{G} = \mathbf{1}_{y<0}\ \tilde{G}_- + \mathbf{1}_{y<0}\ \tilde{G}_+$ by 
		\beq\nonumber
		\left\{\bali
		&\tilde{G}_-(\psi^\delta_{m, \gamma}(y)) = \theta^\delta_{l, \eta}(y), \quad y \in [-1, 0),\\
		&\tilde{G}_+(\psi^\delta_{m, \gamma}(y)) = \theta^\delta_{l, \eta}(y), \quad y \in (0, 1],
		\eali\right.
		\eeq
		and  $\tilde{L} = \mathbf{1}_{y<0}\ \tilde{L}_- + \mathbf{1}_{y<0}\ \tilde{L}_+$ by
		\beq\nonumber
		\left\{\bali
		&\tilde{L}_-(\psi^\delta_{m, \gamma}(y)) = -\left( u^\delta_{m, \gamma} \right)^\prime + \f{\left( \theta^\delta_{l, \eta} \right)'}{2 \theta^\delta_{l, \eta}} u^\delta_{m, \gamma},  &&y \in [-1, 0),\\
		&\tilde{L}_+(\psi^\delta_{m, \gamma}(y)) = -\left( u^\delta_{m, \gamma}  \right)^\prime + \f{\left( \theta^\delta_{l, \eta} \right)'}{2 \theta^\delta_{l, \eta}} u^\delta_{m, \gamma}, &&y \in (0, 1],
		\eali\right.
		\eeq
		Noting that $\tilde{L}, \tilde{G} \in C^2$, we denote their extensions to $\mathbb{R}$ by $G$ and $L$ as
		\beq\bali\nonumber
		G = \mathbf{1}_{y<0}\ G_- + \mathbf{1}_{y<0}\ G_+, \quad L = \mathbf{1}_{y<0}\ L_- + \mathbf{1}_{y<0}\ L_+,
		\eali\eeq
		such that
		\beq\bali\nonumber
		G_\pm \big|_{[0, \psi^\delta_{m, \gamma}(\pm 1)]} = \tilde{G}_\pm, \quad G_+(z) = G_-(z) = G_+(0) + G_+'(0) z + \f{1}{2} G_+''(0) z^2, z \in \BR^-.
		\eali\eeq
		\beq\bali\nonumber
		L_\pm \big|_{[0, \psi^\delta_{m, \gamma}(\pm 1)]} = \tilde{L}_\pm, \quad L_+(z) = L_-(z) = L_+(0) + L_+'(0) z + \f{1}{2} L_+''(0) z^2, z \in \BR^-.
		\eali\eeq
	\end{Def}
	
	By taking $\psi^\delta_{m, \gamma}$ into $\CF$, we obtain the Fr\'echet derivatives
	\beq\nonumber
	\f{\delta \CF}{\delta k}(k, \psi^\delta_{m, \gamma}) = 0, \quad \f{\delta \CF}{\delta \Psi}(k, \psi^\delta_{m, \gamma}) \psi = k^2 \pa_\zeta^2 \psi - \CH_{m, \gamma, l, \eta}^\delta.
	\eeq
	Here, the associated operator $\mathcal{H}_{m, \gamma, l, \eta}^\delta : H_0^1(-1, 1) \rightarrow H^{-1}(-1, 1)$ is defined by
	\beq\nonumber
	\CH_{m, \gamma, l, \eta}^\delta \psi = - \theta^\delta_{l,\eta} \pa_y \left( \f{1}{\theta^\delta_{l,\eta}} \pa_y \psi \right) + \f{ \theta^\delta_{l,\eta} }{ u^\delta_{m,\gamma} } \left( \f{ (u^\delta_{m,\gamma})' }{ \theta^\delta_{l,\eta} } \right)'.
	\eeq
	From Section \ref{section 2}, we know that the minimal eigenvalue of $\mathcal{H}_{m, \gamma, l, \eta}^\delta$ can be shifted to a negative value by varying $m$ and $l$. The following lemma shows that a bifurcation branch emanates from $\psi^\delta_{m, \gamma}$ if $\mathcal{H}_{m, \gamma, l, \eta}^\delta$ possesses a negative eigenvalue.
	
	\begin{lem}\label{lemma of bifurcation}
		If the operator $\CH_{m, \gamma, l, \eta}^\delta$ has a negative eigenvalue $-k_0^2$, then there exists $\alpha_0 > 0$ and continuous functions $\Psi(\alpha): (-\alpha_0, \alpha_0) \rightarrow H^2$ and $k(\alpha): (-\alpha_0, \alpha_0) \rightarrow \mathbb{R}$ satisfying
		\beq\bali\nonumber
		\CF(k(\alpha), \Psi(\alpha)) = 0,
		\eali\eeq
		where the leading order terms are given by
		$$k(0) = k_0, \quad \Psi(\alpha) = \psi^\delta_{m, \gamma} + \alpha \phi_0(y) \cos(\zeta) + o(\alpha), $$
		and the solution satisfies
		$$\| \Psi(\alpha) - \psi^\delta_{m, \gamma} \|_{H^4( \BT_{\f{2\pi}{k(\alpha)}} \times (-1, 1) )} \leq C \alpha.$$
	\end{lem}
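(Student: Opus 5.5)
This is a Crandall--Rabinowitz bifurcation from a simple eigenvalue, applied to $\CF(k,\Psi)$ with $k$ as the bifurcation parameter.

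\textbf{Function spaces.} We restrict to functions even in $\zeta$ and $2\pi$-periodic. Write $\Psi=\psi^\delta_{m,\gamma}+\phi$ and set
$$X=\{\phi\in H^4(\BT_{2\pi}\times(-1,1)):\ \phi \text{ even in } \zeta,\ \phi(\zeta,\pm1)=0\},\qquad Y=\{f\in H^2(\BT_{2\pi}\times(-1,1)):\ f \text{ even in } \zeta\}.$$
We then study $\tilde\CF(k,\phi)=\CF(k,\psi^\delta_{m,\gamma}+\phi)$. The trivial branch gives $\tilde\CF(k,0)=0$ for all $k$, by the definitions of $G$ and $L$ in \eqref{expression of G and L}.

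\textbf{Regularity of $\tilde\CF$.} The map $\tilde\CF:\BR\times X\to Y$ must be $C^2$ near $(k_0,0)$. The profiles $u^\delta_{m,\gamma},\theta^\delta_{l,\eta}$ are smooth for fixed $\gamma,\eta$, and the $\delta$-modification makes $\theta^\delta$ constant and $u^\delta$ linear near $y=0$. So $G_\pm,L_\pm$ glue consistently across the critical level $\Psi=0$, i.e.\ conditions \eqref{condition 1 from G}--\eqref{condition 2 from G and F} hold. With the quadratic extension to $\BR^-$, $G$ and $L$ are therefore regular enough that the superposition operators $\Psi\mapsto G'(\Psi)/(2G(\Psi))$ and $\Psi\mapsto L(\Psi)$ are $C^2$ from $H^4$ into $H^2$ ($H^4$ embeds in $C^2$). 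This requires $G,L\in C^4$, so I would verify the extension preserves enough smoothness, raising the extension degree if necessary.

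\textbf{Linearization and kernel.} The linearized operator is
$$\CL_k=\partial_\phi\tilde\CF(k,0)\phi=k^2\theta\,\partial_\zeta^2\phi-\CH^\delta_{m,\gamma,l,\eta}\phi .$$
Here I would adjust the $\theta$ weight to match the derivative computation, so that on the $n$-th Fourier mode the operator acts as $-(\CH^\delta+n^2k^2)$, up to the positive factor, on $\cos(n\zeta)$. By the Sturm--Liouville discussion, $\CH^\delta_{m,\gamma,l,\eta}$ has at most one negative eigenvalue, here $-k_0^2$, and it is simple. Hence:
\begin{itemize}
\item the $n=1$ mode gives the kernel $\Span\{\phi_0(y)\cos\zeta\}$;
\item the $n=0$ mode contributes nothing, since $-k_0^2$ is the only negative eigenvalue and $0$ is not an eigenvalue; I would impose $0\notin\sigma(\CH^\delta)$, or else exclude the $n=0$ mode by working with $\zeta$-mean-zero perturbations or by the choice of $k$;
\item for $n\ge2$, $-n^2k_0^2<-k_0^2$ is not an eigenvalue, so there is no kernel.
\end{itemize}

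\textbf{Fredholm property.} Elliptic regularity for the Dirichlet problem gives $\CL_{k_0}:X\to Y$ Fredholm of index $0$. The range is the $L^2_{1/\theta}$-orthogonal complement of $\phi_0\cos\zeta$, since $\CH^\delta$ is self-adjoint in $L^2(\theta^{-1}dy)$. This uniform invertibility on the modes $n\neq1$ requires controlling the high modes; the factor $n^2k_0^2$ dominates there, so invertibility holds uniformly in $n$.

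\textbf{Transversality.} We need $\partial_k\CL_k(\phi_0\cos\zeta)|_{k_0}=-2k_0\phi_0\cos\zeta$ up to the weight, and this must lie outside the range. Pairing it with $\phi_0\cos\zeta$ gives $-2k_0\pi\int\phi_0^2/\theta\,dy\neq0$, so transversality holds.

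\textbf{Conclusion.} Crandall--Rabinowitz then yields a continuous curve $(k(\alpha),\Psi(\alpha))$ with $k(0)=k_0$ and $\Psi(\alpha)=\psi^\delta_{m,\gamma}+\alpha\phi_0\cos\zeta+o(\alpha)$ in $X$. This gives the $H^4$ bound $\le C\alpha$; rescaling $\zeta=kx$ returns to period $2\pi/k(\alpha)$. Since $H^4\subset H^2$, the map $\alpha\mapsto\Psi(\alpha)$ is also continuous into $H^2$.

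\textbf{Main obstacle.} The hardest step is the $C^2$ (indeed $C^4$) regularity of $G$ and $L$ through the critical level $\Psi=0$, where $\psi^\delta_{m,\gamma}$ has a fold. Here the inverse $(\psi^\delta|_{(0,\delta)})^{-1}$ behaves like $\sqrt{\cdot}$, so $G_\pm(s)=\theta^\delta_{l,\eta}(y_\pm(s))$ is smooth in $s$ only if $\theta^\delta_{l,\eta}$ and the $L$-profile are even functions of $y$ near $0$ (for $\psi$ quadratic). I would therefore first check that $\Gamma$ and $\digamma$ can be chosen so that $u^\delta_{m,\gamma}$ is odd and $\theta^\delta_{l,\eta}$ is even near $0$: take $\Gamma$ even and $\digamma$ even, with $(u^\delta)'$ even there. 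Evenness makes every function of $y^2$ a smooth function of $\psi$, which gives the needed regularity.
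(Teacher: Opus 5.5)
Your route is the paper's: Crandall--Rabinowitz in $k$ on $\zeta$-even spaces, a kernel spanned by $\phi_0(y)\cos\zeta$, and transversality from $\partial_k\CL_k\psi_0=-2k_0\psi_0$, which pairs nontrivially with $\psi_0$ in $L^2(\theta^{-1})$. The linearization carries no $\theta$ weight on the $\zeta$-part: since $\partial_\zeta\psi^\delta_{m,\gamma}=0$, one gets exactly $\CL_k=k^2\partial_\zeta^2-\CH^\delta_{m,\gamma,l,\eta}$. Your requirement that $\Gamma,\digamma$ be even is a real point the paper passes over. It makes $u^\delta_{m,\gamma}$ odd and $\theta^\delta_{l,\eta}$ and the $L$-profile even on $(-\delta,\delta)$. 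That in turn makes $G_\pm$, $L_\pm$ agree and be smooth through the fold $\Psi=0$, and keeps the potential $\frac{\theta}{u}(\frac{u'}{\theta})'$ bounded at $y=0$.

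Two steps need repair.

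\textbf{The $n=0$ mode.} You leave $0\notin\sigma(\CH^\delta_{m,\gamma,l,\eta})$ as an extra hypothesis, and your fallbacks fail. The $\zeta$-mean-zero subspace is not invariant under $\tilde\CF$, since the quadratic terms have nonzero mean, and $k$ does not enter the $n=0$ block at all. No hypothesis is needed. The function $u^\delta_{m,\gamma}$ solves $\CH^\delta_{m,\gamma,l,\eta}u^\delta_{m,\gamma}=0$, has a single zero in $(-1,1)$, and does not vanish at $\pm1$. Suppose there were a second Dirichlet eigenvalue $\le 0$. Its eigenfunction has an interior zero $z$. Sturm separation/comparison would then force zeros of $u^\delta_{m,\gamma}$ in both $(-1,z)$ and $(z,1)$, which is impossible. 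So there is at most one eigenvalue $\le0$, and this handles $n=0$ and $n\ge2$ together.

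\textbf{The function spaces.} Your choice $X\subset H^4$, $Y\subset H^2$ asks for more regularity than the data give. With $u,\rho\in C^3$ the profiles are smooth only on $(-\delta,\delta)$; they are not ``smooth for fixed $\gamma,\eta$''. The potential involves $u''$ and is only $C^1$, while $G\in C^3$ and $L\in C^2$. Hence $\CL_{k_0}$ does not map $H^4$ into $H^2$, and $\tilde\CF$ is not $C^1$ between these spaces. Raising the extension degree at $\Psi=0$ does not cure this.

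Work instead in $H^3\to H^1$, as the paper does. There you need an extension across $\Psi=0$ of order at least $3$, so that $G\in C^3$. Then recover the $H^4$ statement by elliptic bootstrapping in $\CF(k,\Psi)=0$. Alternatively, mollify $u,\theta$ before the construction, which costs nothing for Theorem~\ref{Thm: travel-wave}.
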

	
	\begin{proof}
		Denote $\Psi_0 = \psi^\delta_{m, \gamma}$, and consider the perturbation $\Psi = \Psi_0 + \psi$. Define $F$ as
		\beq\bali\label{expression of F}
		F(k, \psi) =&  \CF(k, \Psi_0 + \phi) - \CF(k, \Psi_0)\\
		=&  k^2 \left(\pa_\zeta^2 \phi +  \f{G'(\Psi_0)}{2G(\Psi_0)} |\pa_\zeta \Psi_0 |^2 - \f{G'(\Psi_0 + \psi)}{2G(\Psi_0 + \psi)} |\pa_\zeta \Psi_0 + \pa_\zeta \psi|^2 \right) \\
		&+ \pa_y^2 \psi + \f{G'(\Psi_0)}{2G(\Psi_0)} |\pa_y \Psi|^2  - \f{G'(\Psi_0 + \psi)}{2G(\Psi_0 + \psi)} |\pa_y \Psi_0 + \pa_y \psi|^2  + L(\Psi_0) - L(\Psi).
		\eali\eeq
		The Fr\'echet derivatives of $F$ are
		\beq\bali\label{Frechet derivatives of CF}
		&\f{\delta \CF}{\delta k}(k, 0) 
		= 2k \left(\pa_\zeta^2 \Psi_0 - \f{G'(\Psi_s)}{2G(\Psi_0)} |\pa_\zeta \Psi_0|^2 \right),\\
		&\f{\delta F}{\delta \Psi}(k, 0) \psi 
		= \Delta_\zeta \psi 
		- \f{G'(\Psi_0)}{G(\Psi_0)} \nabla_\zeta \Psi_0 \cdot \nabla_\zeta  \psi 
		- \left(\left(\f{G'}{2G}\right)'(\Psi) |\nabla_\zeta \Psi_0|^2 + L'(\Psi_0) \right) \psi, \\
		&\f{\delta^2 F}{\delta k \delta \Psi}(k, 0) \psi 
		= 2k \left(\pa_\zeta^2 \psi - \f{G'(\Psi_s)}{G(\Psi_0)} \pa_\zeta \Psi_0 \pa_\zeta \psi - \left(\f{G'}{2G}\right)'(\Psi_0) |\pa_\zeta \Psi_0|^2 \right).
		\eali\eeq
		Here, $G \in C^3$ and $L \in C^2$, since $u_{m, \gamma}^\delta, \theta_{l, \eta}^\delta \in C^3(-1,1) \cap C^\infty(-\delta, \delta)$. So the Fr\'echet derivatives $\f{\delta F}{\delta \Psi}$ and $\f{\delta^2 F}{\delta k \delta \Psi}$ are continuous in a neighborhood of the origin in $H^3(\BT_{2\pi} \times [-1, 1])$. Define the spaces
		\beq\bali\nonumber
		X =& \{ \psi(\zeta, y) \in H^3(\BT_{2\pi} \times [-1, 1]),\  \psi(\zeta, -1) = \psi(\zeta, 1) = 0,\ \psi(\zeta, y) = \psi(-\zeta, y) \},\\
		Y =& \{ \psi(\zeta, y) \in H^1(\BT_{2\pi} \times [-1, 1]),\  \psi(\zeta, y) = \psi(-\zeta, y) \},
		\eali\eeq
		and let $\Omega = X \cap \{ \|\psi\|_{H^3} \leq \delta_0\}$ for some $\delta_0 \ll 1$ such that $F$ is twice-order differential continuous in $\Omega$. We consider the mapping
		\beq\bali\nonumber
		F(k,  \psi) : \BR \times \Omega \rightarrow Y.
		\eali\eeq
		From \eqref{Frechet derivatives of CF}, the linearized operator of $F$ at $(k_0, 0)$ is
		\beq\bali\nonumber
		\CL = \f{\delta F}{\delta \Psi}(k_0, 0) = k_0^2 \pa_\zeta^2 - \CH_{m, \gamma, l, \eta}^\delta.
		\eali\eeq
		Let $\phi_0 \in H^3(-1,1)$ be the eigenfunction of $\CH_{m, \gamma, l, \eta}^\delta$ corresponding to the eigenvalue $-k_0^2$. We find that $\psi_0(\zeta, y) = \cos(\zeta) \phi_0(y)$ spans the kernel of $\CL$, such that
		\beq\bali\nonumber
		\text{N}(\CL) = \Span\{ \psi_0\}, \quad Y = \text{R}(\CL) \oplus \Span\{ \psi_0\}.
		\eali\eeq
		Also, we find that the transversality condition holds:
		\beq\bali\nonumber
		\f{\delta^2 F}{\delta k \delta \Psi}(k_0, 0) \psi_0
		= 2k_0 \pa_\zeta^2 \psi_0 = -2k_0 \psi_0 \notin \text{R}(\CL).
		\eali\eeq
		Applying the Crandall-Rabinowitz local bifurcation theorem, there exists $\alpha_0 > 0$ and continuous functions $k(\alpha): (-\alpha_0, \alpha_0) \rightarrow \BR$ and $\psi(\alpha): (-\alpha_0, \alpha_0) \rightarrow X$ such that
		\beq\bali\nonumber
		F(k(\alpha), \psi(\alpha)) = 0,
		\eali\eeq
		where the leading order term is $\psi(\alpha) = \alpha \phi_0(y) \cos(\zeta) + o(\alpha)$, and the perturbed stream function is
		\beq\nonumber
		\Psi(\alpha) = \Psi_0 + \alpha \phi_0(y) \cos(\zeta) + o(\alpha).
		\eeq
		Based on equation \eqref{equation of CF} and the regularity $G \in C^3$, $L \in C^2$, we can bootstrap the regularity of $\Psi(\alpha)$ to obtain
		\beq\bali\nonumber
		\| \Psi(\alpha) - \Psi_0 \|_{H^4} = O(\alpha).
		\eali\eeq
	\end{proof}
	
	At the end of this section, we present and complete the proof of Theorem \ref{Thm: travel-wave}.
	
	\begin{proof}[\textbf{Proof of the Theorem \ref{Thm: travel-wave}}]
		Recall that the stream function $\Psi$ satisfies
		\beq\label{equation of CF}
		\CF(k,  \Psi) = \Delta_\zeta  \Psi - \f{G'(\Psi)}{2G(\Psi)} |\nabla_\zeta \Psi|^2 - L(\Psi) = 0, 
		\eeq
		which also yields a solution to the Euler equation \eqref{incompressible Euler system}. 
		We need to construct perturbed solutions to $\CF = 0$ corresponding to $k = 1$, ensuring $\Psi$ is $2\pi$-periodic in $x$. From Lemma \ref{Lemma: Lambda(lambda)}, there exists $\Lambda = \Lambda(-1)$ such that the Rayleigh operator admits a negative eigenvalue $\lambda = -1$.
		For any fixed $l \in \BR$, we can find $m_1$ and $m_2$ such that
		\beq\bali\nonumber
		\Lambda_{m_1, l} > \Lambda(-1) > \Lambda_{m_2, l}.
		\eali\eeq
		By the strict monotonicity of $\Lambda(\lambda)$ and Lemma \ref{lemma for smallness of gamma and eta}, there exist $\eta_0>0$ and $\gamma_0>0$ such that for all $\eta \in (0, \eta_0)$ and $\gamma \in (0, \gamma_0)$, the negative eigenvalue of the associated Rayleigh operator satisfies
		\beq\bali\nonumber
		\lambda_{m_1,\gamma,l,\eta} < -1 < \lambda_{m_2,\gamma,l,\eta}.
		\eali\eeq
		Applying the Lemma \ref{lemma of bifurcation}, we obtain $r_0 > 0$ such that for each $m \in (m_2, m_1)$ and $r \in (0, r_0)$, there exist $\alpha = \alpha(m,r)$ and a perturbed stream function $\Psi(\alpha)$ satisfying
		\beq\bali\nonumber
		\| (u_\alpha, v_\alpha) - (u^\delta_{m, \gamma},0)\|_{H^3(\BT_{\f{2\pi}{k(\alpha)}} \times (-1,1))} + \|\theta_\alpha - \theta^\delta_{l,\eta}\|_{H^2(\BT_{\f{2\pi}{k(\alpha)}} \times (-1,1))} = r.
		\eali\eeq
		Since the wave number $k(\alpha(m,r))$ depends continuously on $m$, the Intermediate Value Theorem guarantees the existence of $m_0 \in (m_2, m_1)$ such that $k(\alpha(m_0,r)) = 1$. We arrive at
		\beq\bali\nonumber
		\| (u_\alpha, v_\alpha) - (u^\delta_{m_0, \gamma},0)\|_{H^3(\BT_{2\pi} \times (-1,1))} + \|\theta_\alpha - \theta^\delta_{l,\eta}\|_{H^2(\BT_{2\pi} \times (-1,1))} = r.
		\eali\eeq
		Combining this result with Lemma \ref{lemma of distance between u,theta and u^delta, theta^delta}, we choose $r = c_0 \epsilon^\tau$ and require $\delta, \gamma, \eta \leq c_0 \epsilon$ for a sufficiently small constant $c_0$. We thus obtain
		\beq\bali\nonumber
		\| \uu_\alpha - (u(y),0)\|_{H^{\f{5}{2} - \tau}} + \|\theta_\alpha - \theta\|_{H^{\f{3}{2} - \tau}} \leq \epsilon^\tau.
		\eali\eeq
		This completes the proof.
	\end{proof}

	\section{Non-existence of the traveling wave solutions in $H^{\f{3}{2}+\tau}$}
	In this section, we establish the non-existence of traveling wave solutions in high-order Sobolev spaces. Based on the argument in the subsection \ref{sebsection of idea of proof}, Theorem \ref{Theorem of nonlinear stability} follows directly from the following lemma.
	
	\begin{prop}\label{Proposition of non-existence of travelling wave solutions}
		Let $u,\theta$ be such that the distorted Rayleigh operator $u(y) \text{Id} - (\f{u'}{\theta})'(y) \tilde{\Delta}^{-1}$ has no eigenvalues or embedding eigenvalues. For any $\tau > 0$, there exists $\varepsilon_0 > 0$ such that if $u_s, v_s, \theta_s$  and $c \in \BR$ satisfying
		\beq\label{eq0.1}
		\| (u_s, v_s) - (u(y), 0)\|_{H^{\f{5}{2}+\tau}} + \| \theta_s - \theta(y) \|_{H^{\f{3}{2}+\tau}} \leq \epsilon_0,
		\eeq
		is a solution of \eqref{eq0.2} with the boundary condition $v_s (x, 1) = v_s (x, -1) =0$, then $v_s \equiv 0$.
	\end{prop}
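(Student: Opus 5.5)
Following the strategy outlined in the introduction, I would rewrite \eqref{eq0.2} as a linear distorted Rayleigh equation for $v_s$ with a small source, prove a uniform resolvent-type estimate, and close by absorbing the source.

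\textbf{Reduction.} First, fix the wave speed. Since $v_s\not\equiv 0$ would force a critical layer, the speed $c$ must lie in the range of $u_s$; hence $|c-c_\epsilon|\lesssim\epsilon_0$ for some $c_\epsilon$ in the range of $u$, with $c_\epsilon\in[u(-1),u(1)]$. If $c$ lies outside a neighborhood of this range, the equation is nondegenerate and the argument below is easier. Next, write \eqref{eq0.2} in the form \eqref{eq: 1}:
\[
(u-c_\epsilon)\tilde{\Delta}v_s-\Big(\f{u'}{\theta}\Big)'v_s=F,
\]
where $\tilde\Delta=\nabla\cdot(\theta^{-1}\nabla\cdot)$. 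The right-hand side $F$ collects three pieces: $(u_s-u)\nabla\cdot(\theta_s^{-1}\nabla v_s)$, the term $(\nabla\cdot(\theta_s^{-1}\nabla u_s)-(u'/\theta)')v_s$, the change of $\tilde\Delta$ from $\theta$ to $\theta_s$, and $(c_\epsilon-c)\tilde\Delta v_s$. Because $x$-averaging kills $v_s$ (incompressibility plus the boundary condition gives $\int v_s\,dx=0$), it suffices to work with nonzero Fourier modes in $x$.

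\textbf{Key linear estimate.} The central step is to show
\[
\|\nabla v\|_{L^2}\le C\|F\|_{H^{1/2+\tau}}
\]
for solutions of $(u-c)\tilde\Delta v-(u'/\theta)'v=F$ with $v(\pm1)=0$. The constant $C$ must be uniform in $c$ near the range of $u$, and in the paper this is the content of Lemma \ref{resolvent estimate}. I would prove it by contradiction. Suppose there are sequences $c_n$, $v_n$, $F_n$ with $\|\nabla v_n\|_{L^2}=1$ and $\|F_n\|_{H^{1/2+\tau}}\to0$. Pass to a weak limit $v_n\rightharpoonup v$ in $H^1_0$, with $c_n\to c_*$. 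Mode by mode in $k$, the limit solves the homogeneous Rayleigh equation, and the singular point $u=c_*$ is interpreted via the limiting absorption principle. Concretely, near $y_c=u^{-1}(c_*)$ one uses the Frobenius structure of solutions ($v\sim (u-c)\log|u-c|$ terms), and the $H^{1/2+\tau}$ control on $F$ guarantees that $F/(u-c)$ makes sense in $H^{-1}$ uniformly. The hypothesis of no eigenvalues or embedded eigenvalues then forces $v=0$.

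The remaining task is to upgrade weak convergence to strong convergence, which contradicts $\|\nabla v_n\|=1$. To do this, test the equation with $\overline{v_n}/(u-c_n)$ after splitting off a neighborhood of the critical layer. This is the main obstacle. The weight $1/(u-c_n)$ is singular, and the $H^{1/2+\tau}$ regularity of $F$ is exactly what is needed to bound $\langle F_n, v_n/(u-c_n)\rangle$ by $\|F_n\|_{H^{1/2+\tau}}\|v_n\|_{H^1}$. I would control this pairing via a Hardy-type inequality in $y$ combined with trace bounds at $y_c$. Large $|k|$ modes are handled directly by coercivity, since the $k^2$ term dominates.

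\textbf{Closing.} Now bound $F$ using \eqref{eq0.1} together with product estimates in $H^{1/2+\tau}$ (an algebra-like estimate in 2D when multiplied by $H^{1+\tau}$ functions):
\[
\|F\|_{H^{1/2+\tau}}\lesssim \epsilon_0\|\nabla v_s\|_{L^2}+\dots
\]
Here $\|v_s\|_{H^{5/2+\tau}}\le\epsilon_0$, $\|\theta_s-\theta\|_{H^{3/2+\tau}}\le\epsilon_0$, and $|c-c_\epsilon|\lesssim\epsilon_0$. The terms where $\tilde\Delta v_s$ appears are estimated by interpolating smallness of the coefficient against the $H^{5/2+\tau}$ bound on $v_s$, and these should be handled by putting them in a form $\nabla\cdot(\text{small}\cdot\nabla v_s)$ that the contradiction argument tolerates. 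One could also choose $c_\epsilon$ optimally, e.g. as the critical value itself, so that $c=c_\epsilon$. Combining with the linear estimate gives $\|\nabla v_s\|\le C\epsilon_0\|\nabla v_s\|$, hence $v_s\equiv0$ for $\epsilon_0$ small.
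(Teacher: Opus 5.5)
\textbf{There is a genuine gap: the way you split the equation does not close.} Your architecture matches the paper's: a uniform resolvent estimate proved by contradiction, then absorbing a small forcing through a product estimate. The trouble is that you freeze the principal coefficients at the background $(u,\theta)$. That moves $(u_s-u)\nabla\cdot(\theta_s^{-1}\nabla v_s)$ and $(u-c)\nabla\cdot\big((\theta_s^{-1}-\theta^{-1})\nabla v_s\big)$ into $F$.

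These terms contain second derivatives of $v_s$. Their $H^{1/2+\tau}$ norm is therefore controlled by $\epsilon_0\|v_s\|_{H^{5/2+\tau}}$, and never by $\epsilon_0\|\nabla v_s\|_{L^2}$. What you actually get is
\[
\|\nabla v_s\|_{L^2}\le C\epsilon_0\|\nabla v_s\|_{L^2}+C\epsilon_0\|v_s\|_{H^{5/2+\tau}}.
\]
This only gives $\|\nabla v_s\|_{L^2}\lesssim\epsilon_0^2$, not $v_s\equiv0$. No estimate $\|v_s\|_{H^{5/2+\tau}}\lesssim\|\nabla v_s\|_{L^2}$ is available to rescue it, since the Rayleigh problem loses regularity at the critical layer. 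Interpolation cannot help, because you control no norm above $H^{5/2+\tau}$.

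Your suggested rewriting as $\nabla\cdot(g\nabla v_s)$ with small $g$ also fails. Your linear estimate needs $F\in H^{1/2+\tau}$, because $F$ is effectively divided by $u-c$ at the critical layer. Divergence-form terms are only small in $H^{-1}$.

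\textbf{The missing idea is to keep the perturbed coefficients inside the operator.} The paper's Lemma \ref{resolvent estimate} proves $\|\nabla v\|_{L^2}\le C\|F\|_{H^{1/2+\tau}}$ for
\[
(u_s-c)\nabla\cdot(\theta_s^{-1}\nabla v)-(u'/\theta)'v=F .
\]
The constant is uniform in $c\in\BC$ and over all $(u_s,\theta_s)$ in an $\epsilon_0$-neighbourhood of $(u,\theta)$. In the contradiction argument, the converging coefficients $u_k\to u$, $\theta_k\to\theta$ are part of the sequence. Only the limiting equation is a shear distorted Rayleigh equation, and that is where the no-eigenvalue hypothesis is used.

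With this lemma, the only forcing is the zeroth-order term $v_s\big(\nabla\cdot(\theta_s^{-1}\nabla u_s)-(u'/\theta)'\big)$. Its coefficient is $O(\epsilon_0)$ in $H^{1/2+\tau}$ by \eqref{eq0.1}. The product bound $\|vf\|_{H^{1/2+\tau/2}}\lesssim\|v\|_{H^1}\|f\|_{H^{1/2+\tau}}$, with the lemma applied at $\tau/2$, then closes to $\|\nabla v_s\|_{L^2}\le C\epsilon_0\|\nabla v_s\|_{L^2}$.

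Two further consequences of this change:
\begin{itemize}
\item Once $u_s,\theta_s$ depend on $x$ and sit in the operator, Fourier modes couple. Your mode-by-mode Frobenius analysis is then only available for the limit equation, not for the operator itself.
\item Uniformity in $c$ makes the auxiliary speed $c_\epsilon$, and the term $(c_\epsilon-c)\tilde\Delta v_s$, unnecessary.
\end{itemize}
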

	
	Before the proof of the proposition, we need the following lemma regarding the property of the disturbed, distorted Rayleigh operator and the resolvent estimation.
	
	\begin{lem}\label{resolvent estimate}
		Let $u,\theta$ be such that the distorted Rayleigh operator $u(y) \text{Id} - (\f{u'}{\theta})'(y) \tilde{\Delta}^{-1}$ has no eigenvalues or embedding eigenvalues. Then for any $\tau > 0$, there is $\epsilon_0 > 0$ such that for all $u_s, \theta_s$ satisfying
		$$ \| u_s(x, y) - u(y) \|_{H^{\f{5}{2}+\tau}(\BT_{2\pi} \times (-1, 1))} + \| \theta_s(x, y) - \theta(y) \|_{H^{\f{5}{2}+\tau}(\BT_{2\pi} \times (-1, 1))} \leq \epsilon_0, $$
		the solution to 
		\beq \label{eq0.4}
		(u_s - c) \nabla \cdot \left( \f{1}{\theta_s} \nabla v \right) - v \left( \f{u'}{\theta}\right)' = F, 
		\eeq 
		with Dirichlet boundary condition $v (x, 1) = v (x, -1) =0$ and all $c \in \BC$ satisfies
		\beq\label{eq: H^1 bound}
		\|\nabla v\|_{L^2} \leq C \|F\|_{H^{\f{1}{2} + \tau}}.
		\eeq
		Here, the constant C is independent of $c$.
	\end{lem}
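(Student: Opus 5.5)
We argue by contradiction, uniformly in the spectral parameter $c$. Suppose the estimate \eqref{eq: H^1 bound} fails. Then there are sequences $\epsilon_n\to 0$, profiles $(u_n,\theta_n)$ with
$$\|u_n-u\|_{H^{\f52+\tau}}+\|\theta_n-\theta\|_{H^{\f52+\tau}}\le \epsilon_n,$$
complex speeds $c_n$, and solutions $v_n$ of \eqref{eq0.4} with
$$\|\nabla v_n\|_{L^2}=1, \qquad \|F_n\|_{H^{\f12+\tau}}\to 0.$$
First, expanding $v_n=\sum_k \hat v_n(k,y)e^{ikx}$ does not fully decouple the modes, because $u_n,\theta_n$ depend on $x$. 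We would instead treat the $x$-dependent parts $u_n-u$ and $\f1{\theta_n}-\f1\theta$ as perturbations. Since $H^{\f52+\tau}\hookrightarrow W^{1,\infty}\cap C^{1,\tau}$ in two dimensions, these perturbations and their first derivatives are $O(\epsilon_n)$ in $L^\infty$. This allows us to rewrite the equation as
$$(u-c_n)\tilde\Delta v_n-\Big(\f{u'}{\theta}\Big)'v_n=F_n+R_n,$$
where $\tilde\Delta=\nabla\cdot(\theta^{-1}\nabla\,\cdot)$ and
$$R_n=-(u_n-u)\nabla\!\cdot\!\Big(\f1{\theta_n}\nabla v_n\Big)-(u-c_n)\nabla\!\cdot\!\Big(\big(\tfrac1{\theta_n}-\tfrac1\theta\big)\nabla v_n\Big).$$
The key point is to measure $R_n$ in a norm that the shear-flow resolvent controls. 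For a single Fourier mode $k\neq0$ (the mode $k=0$ is trivial, since $\pa_x v$ and the Dirichlet condition force a separate elliptic treatment), we would use the known 1D limiting absorption estimate. For the unperturbed distorted Rayleigh operator with no eigenvalues or embedded eigenvalues, one has
$$\|\hat v\|_{H^1}\lesssim \|\hat f\|_{H^{\f12+\tau}}\quad\text{(or } \|\hat f\|_{H^1}\text{)}$$
uniformly in $c$. This is obtained itself by a compactness and contradiction argument: the limit $c_n\to c_*$ would yield either an eigenvalue (if $c_*\notin \mathrm{Ran}\,u$ or $\operatorname{Im}c_*\ne0$) or an embedded eigenvalue (if $c_*\in\mathrm{Ran}\,u$ with $c_*$ real). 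In the latter case one uses the jump relation at the critical layer $u(y_c)=c_*$ in the spirit of Wei–Zhang–Zhao. For $|c|$ large the estimate is direct.

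Next we pass to the limit in the contradiction sequence. The normalization gives $v_n\rightharpoonup v_*$ weakly in $H^1_0$, and the $c_n$ are bounded, since for $|c_n|\to\infty$ dividing by $c_n$ gives $\tilde\Delta v_n\to0$ and hence $v_n\to0$ in $H^1$, contradicting the normalization. If $c_n\to c_*$ with $c_*$ real in the range of $u$, then $v_*$ solves the homogeneous shear equation in the distributional sense. The no-embedded-eigenvalue hypothesis then forces $v_*=0$, and otherwise the no-eigenvalue hypothesis does. Strong convergence $\|\nabla v_n\|_{L^2}\to0$, which yields the contradiction, would come from an energy identity: multiply by $\overline{v_n}/(u_n-c_n)$ and split near and away from the critical layer.

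The main obstacle is exactly this step. One must control
$$\int \f{|v_n|^2}{u-c_n}\Big(\f{u'}{\theta}\Big)'\qquad\text{and}\qquad \Big\langle \f{F_n+R_n}{u-c_n},\,v_n\Big\rangle$$
uniformly as $\operatorname{Im}c_n\to0$. This is where the regularity $H^{\f12+\tau}$ of $F$ (so $F\in C^{0,\tau'}$ in $y$ modewise, making the principal value of $F/(u-c)$ bounded) and the $C^{1,\tau}$ control of $u_n-u$ and $\theta_n-\theta$ are needed. I would first try to prove the modewise Hölder/principal-value bound
$$\Big|\int \f{f\bar g}{u-c}\Big|\lesssim \|f\|_{H^{\f12+\tau}}\|g\|_{H^1},$$
and then absorb the $R_n$ contribution, which is $O(\epsilon_n)\|\nabla v_n\|_{L^2}$ after integrating the divergence by parts against $\bar v_n/(u-c_n)$.
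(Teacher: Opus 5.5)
The step that fails is your reduction to the shear operator through $R_n$. By the equation itself, $\nabla\cdot(\theta_n^{-1}\nabla v_n)=\big((u'/\theta)'v_n+F_n\big)/(u_n-c_n)$. So the first piece of your remainder is $(u_n-u)\nabla\cdot(\theta_n^{-1}\nabla v_n)=\frac{u_n-u}{u_n-c_n}\big((u'/\theta)'v_n+F_n\big)$. This is a factor of size $\epsilon_n$ times a pole along the curve $\{u_n(x,y)=\operatorname{Re}c_n\}$. That curve depends on $x$, is not the critical line $\{u(y)=\operatorname{Re}c_n\}$ of your model operator, and in general $u_n-u$ does not vanish on it. Nothing prevents the contradiction sequence from having $|\operatorname{Im}c_n|\ll\epsilon_n^2$, or $c_n$ real. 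Hence $R_n$ is not small, in general not even bounded, in $L^2$, let alone in $H^{\frac12+\tau}$. The limiting absorption estimate for the shear operator therefore cannot be applied to $F_n+R_n$, and since the modes are coupled you cannot apply it mode by mode either. Writing $R_n$ in divergence form only makes it $O(\epsilon_n)$ in $H^{-1}$, and the shear resolvent is not uniformly bounded from $H^{-1}$ to $H^1$ as $\operatorname{Im}c\to0$. That is exactly why $F$ is required to lie in $H^{\frac12+\tau}$. Your closing absorption claim fails for the same reason. Pairing with $\bar v_n/(u-c_n)$ and integrating by parts produces $\int\theta_n^{-1}\frac{u_n-u}{u-c_n}|\nabla v_n|^2$ and $\int\theta_n^{-1}\frac{(u_n-u)\bar v_n}{(u-c_n)^2}\nabla u\cdot\nabla v_n$. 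Their weights are of size $\epsilon_n/|u-c_n|$ and $\epsilon_n/|u-c_n|^2$, which are not uniformly integrable, while $|\nabla v_n|^2$ is only $L^1$ and has no H\"older regularity to give principal-value cancellation. So $\langle R_n/(u-c_n),v_n\rangle=O(\epsilon_n)\|\nabla v_n\|_{L^2}^2$ is false, and the plan as written does not close.

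No remainder is needed at all. The zeroth-order coefficient $(u'/\theta)'$ in the lemma is already unperturbed, so you should keep the true coefficient $u_n-c_n$ throughout and use only $\partial_y u_n\ge c_0/2$, which follows from $H^{\frac52+\tau}\hookrightarrow C^1$. This is what the paper does. It controls $\int\frac{(u'/\theta)'v_n\phi}{u_n-c_n}$ by integrating by parts in $y$ against $\ln(u_n-c_n)$. It controls $\int\frac{F_n\phi}{u_n-c_n}$ by the change of variables $y\mapsto u_n(x,y)-\operatorname{Re}c_n$ at fixed $x$, together with H\"older continuity of $F_n\phi$ in $y$; this is your principal-value bound, but with $u_n$ in the denominator. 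These bounds make $\nabla v_n$ small near the critical layer, and elliptic estimates give compactness away from it. Hence $v_n\to v_\infty$ strongly in $H^1$ with $\|\nabla v_\infty\|_{L^2}=1$, and $v_\infty$ is an eigenfunction or an embedded eigenfunction. With the same two tools your own ending also works, in the opposite order. The weak limit $v_*$ satisfies the limit equation together with the Plemelj term $\pm i\pi\frac{(u'/\theta)'(y_*)}{u'(y_*)}\int v_*(x,y_*)\bar\psi(x,y_*)\,dx$, which you omitted. You must first remove it by testing with $\psi=v_*$ and taking imaginary parts; only then does the hypothesis give $v_*=0$. Then consider the identity $\int\theta_n^{-1}|\nabla v_n|^2=-\int\frac{(u'/\theta)'|v_n|^2+F_n\bar v_n}{u_n-c_n}$. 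Estimate the first term by the logarithmic integration by parts plus Rellich compactness ($v_n\to0$ in every $L^p$, $p<\infty$), and the second by the H\"older bound. This forces $\|\nabla v_n\|_{L^2}\to0$, the desired contradiction, and the one-dimensional resolvent estimate plays no role.
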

	
	\begin{proof}
		If $|c|\geq M$ for some sufficiently large $M$, then \eqref{eq: H^1 bound} holds. Indeed, by taking the inner product of equation \eqref{eq0.4} with $\bar{v}$, we find that 
		\beq\bali\nonumber
		\int_{\BT_{2\pi} \times (-1, 1)} \f{1}{\theta_s} |\nabla v|^2 \,\dx\dy =& \int_{\BT_{2\pi} \times (-1, 1)} \f{|v|^2 \left( \f{u'}{\theta}\right)' + F \bar{v}}{u_s - c} \,\dx\dy\\
		\leq& \f{C}{M} \left( \|v\|_{L^2}^2 + \|F\|_{L^2} \|v\|_{L^2} \right)\\
		\leq& \f{C}{M} \left( \|\nabla v\|_{L^2}^2 + \|F\|_{L^2}^2 \right). 
		\eali\eeq
		Choosing $M$ sufficiently large, we obtain the estimate \eqref{eq: H^1 bound}.
		
		Now, we focus on $|c|\leq M$. 
		We prove the lemma by a contradiction argument. Suppose that there exists some $\tau$ such that for any $\epsilon_0$, the estimate \eqref{eq: H^1 bound} does not hold, namely, for any $k>0$ large enough, there exists a sequence  $\{u_k, v_k, \theta_k, F_k, c_k\}$ satisfying the equation
		\beq\label{equation of u_k, v_k, F_k}
		(u_k - c_k) \nabla \cdot \left( \f{1}{\theta_k} \nabla v_k \right) - v_k \left( \f{u'}{\theta}\right)' = F_k,
		\eeq
		subject to the bounds
		\beq\bali\nonumber
		\| (u_k, v_k) - (u(y), 0)\|_{H^{\f{5}{2}+\tau}} +& \| \theta_k - \theta(y) \|_{H^{\f{3}{2}+\tau}} \leq \f{1}{k},\\
		\| \nabla v_k\|_{L^2} = 1,& \quad \|F_k\|_{H^{\f{1}{2} + \tau}} \leq \f{1}{k},
		\eali\eeq
		Then, up to the extraction of a subsequence, there exist $v_\infty$ and $c_\infty$ such that
		\beq\bali\label{expressions of limit convergence}
		c_k \rightarrow c_\infty, \quad v_k \rightharpoonup v_\infty \text{ in } H^1 \text{ and } F_k \rightarrow 0 \text{ in } {H^{\f{1}{2} + \tau}}.
		\eali\eeq
		First, we consider the case $c_\infty \in \text{Ran}(u)$. Let $y_\infty$ be a critical point that $u(y_\infty) - c_\infty = 0$. We define the critical region near $y_\infty$ as
		$$\quad A_\delta^y := \{ y \in (-1, 1) : |u(y) - c_\infty| \leq \delta\}, \quad A_\delta = \BT_{2\pi} \times  A_\delta^y.$$
		Multiplying \eqref{equation of u_k, v_k, F_k} by a test function $\phi$ supported in $A_\delta$ yields the expression
		\beq\bali\nonumber
		\int_{A_\delta} \f{1}{\theta_k} |\nabla v_k |^2  = \sup_{\text{supp}(\phi) \subset A_\delta}	\left( \int_{A_\delta} \f{1}{\theta_k} |\nabla \phi |^2 \right)^{-1} 	\left| \int_{A_\delta} \f{v_k \left( \f{u'}{\theta}\right)' + F_k}{u_k - c_k} \phi \,\dx \dy \right|.
		\eali\eeq
		Since $u_k \rightarrow u$ in $H^{\f{5}{2} + \tau}$, there is $K>0$ such that
		\beq\label{inequality of bound of pa_y u_k}
		\pa_y u_k(y) > \f{1}{2} \|\pa_y u\|_{L^\infty(-1,1)} \geq \f{c_0}{2},
		\eeq
		for all $y \in (-1, 1)$ and $k > K$. Furthermore, the measure of the vertical cross-section satisfies $ |A_\delta^y| \leq C \delta$. We obtain that
		\beq\bali\nonumber
		\left| \int_{A_\delta} \f{v_k \left( \f{u'}{\theta}\right)' \phi}{u_k - c_k} \,\dx \dy \right| =& \left| \int_{A_\delta}  \pa_y \left(\f{v_k \left( \f{u'}{\theta}\right)' \phi}{ \pa_y u_k}\right) \ln(u_k - c_k) \,\dx \dy \right| \\
		\leq& C\delta^\f{1}{2} \|\nabla v_k\|_{L^2(A_\delta)} \|\phi\|_{H^1}.
		\eali\eeq
		To control the second term, we denote $c_k = r_k + i \varepsilon_k$ and perform the variable transformation $(x, v) = (x, u_k(x, y) - r_k)$. Define
		$$G_k = (F_k \phi) \circ (u_k(x, y) - r_k)^{-1}, \quad (u_k - r_k) A_\delta = \{(x, y): y \in (a(x), b(x))\}.$$ 
		Without loss of generality, we consider the region $\{x_0 \in \BT_{2\pi}: |a(x_0)| \leq |b(x_0)|\}$. We have
		\beq\bali\nonumber
		&\int_{A_\delta^y} \f{F_k \phi}{u_k - c_k} \,\dy = \int_{a(x_0)}^{b(x_0)} \f{G_k}{v - i \varepsilon_k} |\pa_y u_k|^{-1} \,\dv\\
		&= \textbf{1}_{0 \leq a(x_0)} \int_{a(x_0)}^{b(x_0)} \f{G_k}{v - i \varepsilon_k} |\pa_y u_k|^{-1} \,\dv + \textbf{1}_{a(x_0) < 0} \int_{a(x_0)}^{b(x_0)} \f{G_k}{v - i \varepsilon_k} |\pa_y u_k|^{-1} \,\dv\\
		:&= I_1 + I_2,
		\eali\eeq
		Due to the vanishing of $\phi$ at the boundary, we have $G(x_0, a(x_0)) = G(x_0, b(x_0)) = 0$ and $|b(x_0) - a(x_0)| \leq C \delta$. This leads to the following estimates
		\beq\bali\nonumber
		|I_1| =& \left| \int_{a(x_0)}^{b(x_0)} \f{G_k - G_k(x_0, a(x_0))}{v - i \varepsilon_k} |\pa_y u_k|^{-1} \,\dv \right|\\
		\leq& C \|G_k\|_{H^{\f{1}{2}+ \tau}_y} \int_{a(x_0)}^{b(x_0)} \f{|v - a(x_0)|^\tau }{ |v - i \varepsilon_k|} \,\dv \leq C_\tau \delta^\tau \|G_k\|_{H^{\f{1}{2}+ \tau}_y},
		\eali\eeq
		and
		\beq\bali\nonumber
		|I_2| =& \left| \int_{a(x_0)}^{b(x_0)} \f{(G_k - G_k(x_0, 0)) + (G_k(x_0, 0) - G_k(x_0, a(x_0))}{v - i \varepsilon_k} |\pa_y u_k|^{-1} \,\dv \right|\\ 
		\leq& C_\tau \|G_k\|_{H^{\f{1}{2}+ \tau}_y} \left(\int_{a(x_0)}^{b(x_0)} \f{|v|^\tau}{ |v - i \varepsilon_k|} \,\dv  + |a(x_0)|^\tau \left| \int_{a(x_0)}^{b(x_0)} \f{|\pa_y u_k|^{-1}}{v - i \varepsilon_k} \,\dv \right| \right) \\
		\leq& C_\tau \delta^\tau |\ln \delta| \|G_k\|_{H^{\f{1}{2}+ \tau}_y}.
		\eali\eeq
		Here, we use the H\"older embedding theorem:
		\beq\bali\nonumber
		|G_k(x_0, y_1) - G_k(x_0, y_2)| \leq |y_1 - y_2|^\f{\tau}{2} \| G(x_0) \|_{C^{0, \f{\tau}{2}}_y} \leq C |y_1 - y_2|^\f{\tau}{2} \| G(x_0) \|_{H^{\f{1 + \tau}{2}}_y}.
		\eali\eeq
		Using the bound \eqref{inequality of bound of pa_y u_k}, we have
		\beq\bali\nonumber
		\|G_k\|_{L^2_y}^2 =& \int_{a(x)}^{b(x)} |F_k \phi|^2 |\pa_y u_k| \dy \leq C\|F_k\|_{L^2_y}^2 \|\phi\|_{\dot{H}^1_y}^2, \\
		\|G_k\|_{\dot{H}^1_y}^2 =& \int_{a(x)}^{b(x)} |\pa_y(F_k \phi)|^2 |\pa_y u_k|^{-1} \dy \leq C\|F_k\|_{\dot{H}^1_y}^2 \|\phi\|_{\dot{H}^1_y}^2.
		\eali\eeq
		Then, by Calder\'on's complex interpolation method, we obtain the bound 
		$$\|G_k\|_{H^{\frac{1}{2}+ \tau}_y} \leq C \|F_k\|_{H^{\frac{1}{2}+ \tau}_y} \|\phi\|_{H^1_y}.$$ 
		Thus, we deduce that
		\beq\bali\nonumber
		\left| \int_{A_\delta} \f{F_k \phi}{u_k - c_k} \,\dx \dy \right| 
		\leq& C_\tau \delta^\f{\tau}{2} |\ln \delta| \int_{\BT_{2\pi}} \|G_k\|_{H^{\f{1+\tau}{2}}_y} \,\dx \\
		\leq& C \delta^\f{\tau}{2} |\ln \delta| \|F_k\|_{H^{\f{1}{2}+ \tau}} \|\phi\|_{H^1}.
		\eali\eeq
		In the complement of the critical region,, we also have the bound
		$$\|\Delta v_k\|_{L^2(A_\delta^c)} \leq C \delta^{-1} \|v_k \pa_y^2 u + F_k\|_{L^2(A_\delta^c)} \leq C_\delta, $$
		which implies that $v_k \rightarrow v_\infty \text{ in } H^1(A_\delta^c)$. We conclude to
		$$v_k \rightarrow v_\infty \text{ in } H^1.$$
		
		Next, we show that $v_\infty \in H^2$ is a strong solution of the limit equation. Taking the $L^2$ inner product of equation \eqref{equation of u_k, v_k, F_k} with a test function $\psi$ supported in $\BT_{2\pi} \times (-1, 1)$, we have
		\beq\bali\nonumber
		\left\langle \f{1}{\theta_k} \nabla v_k, \nabla \psi \right\rangle + \left\langle \f{v_k \left( \f{u'}{\theta}\right)' + F_k}{u_k - c_k}, \psi \right\rangle = 0.
		\eali\eeq
		Since $v_k \rightarrow v_\infty$ and $F_k \rightarrow 0$ in $H^{\f{1}{2} + \tau}$, passing to the limit as $k \to \infty$ yields
		\beq\bali\nonumber
		\left\langle \f{1}{\theta} \nabla v_\infty, \nabla \psi \right\rangle 
		+ P.V. \left\langle \f{v_\infty \left( \f{u'}{\theta}\right)' }{u - c_\infty}, \psi \right\rangle 
		+ i\pi \f{\left( \f{u'}{\theta}\right)'(y_\infty) }{ u'(y_\infty) }\int_{\BT_{2\pi}} v_\infty(x, y_\infty) \bar{\psi}(x, y_\infty) \dx = 0.
		\eali\eeq
		Taking $\psi = v_\infty$ and separating the real and imaginary parts, we find that $\left( \f{u'}{\theta}\right)' v_\infty(x, y_\infty) \equiv 0$. So $v_\infty$ solves the equation
		\beq\label{equation of limit v_infity}
		(u - c_\infty) \nabla \cdot \left( \f{1}{\theta} \nabla v_\infty \right) - v_\infty \left( \f{u'}{\theta}\right)' = 0,
		\eeq
		point-wise. This implies that $c_\infty$ is an embedded eigenvalue of the distorted Rayleigh operator, which leads to a contradiction.
		
		If $c_\infty \notin \text{Ran}(u)$, then there exist $K>0$ and $\delta>0$ such that $\|u_k - c_k\|_{L^\infty} \geq \delta$ for all $k > K$. We immediately obtain that
		$$\|\Delta v_k\|_{L^2} \leq C \delta^{-1} \|v_k \pa_y^2 u + F_k\|_{L^2} \leq C_\delta. $$
		So $v_k \rightarrow v_\infty \text{ in } H^1$. Following the same arguments, we get that $v_\infty$ solves the equation \eqref{equation of limit v_infity} point-wise. So the $c_\infty$ is an eigenvalue of the distorted Rayleigh operator, which also leads to a contradiction.
	\end{proof}
	
	Then we finish the proof of the proposition \ref{Proposition of non-existence of travelling wave solutions}.
	
	\begin{proof}[\textbf{Proof of Proposition \ref{Proposition of non-existence of travelling wave solutions}}]
		Suppose that $(u_s, v_s)$ and $\theta_s$ is a solution to the equation \eqref{eq0.2} with $c \in \BR$ satisfying condition \eqref{eq0.1}.
		We rewrite the equation by treating the perturbation coefficient of $v_s$ as a forcing term on the right-hand side:
		\beq\label{eq1 in proof of prop 3.1}
		(u_s - c) \nabla \cdot \left( \f{1}{\theta_s} \nabla v_s \right) - v_s \left( \f{u'}{\theta} \right)' = v_s \left( \nabla \cdot \left( \f{\nabla u_s}{\theta_s} \right) -\left( \f{u'}{\theta} \right)' \right).
		\eeq
		It suffices to establish the following interpolation inequality. Let $p,q$ satisfy
		$$\f{1}{q} = \f{1}{2} - \f{\tau}{4}, \quad \f{1}{p} = \f{1}{2} - \f{1}{q}.$$ 
		Then, by the Sobolev embedding theorem, we have
		\beq\bali\nonumber
		\|vf\|_{L^2} \leq C \|v\|_{L^p} \|f\|_{L^q} \leq C \|v\|_{H^1} \|f\|_{H^\f{\tau}{2}},
		\eali\eeq
		and
		\beq\bali\nonumber
		\|vf\|_{\dot{H}^1} \leq C\left(\|\nabla v\|_{L^2} \|f\|_{L^\infty} + \|v\|_{L^p} \|\nabla f\|_{L^q}\right) \leq C \|v\|_{H^1} \|f\|_{H^\f{1 + \tau}{2}}.
		\eali\eeq
		Using Calder\'on's complex interpolation method, we obtain
		$$\|vf\|_{H^\f{1+\tau}{2}} \leq C \|v\|_{H^1} \|f\|_{H^{\f{1}{2} + \tau}}.$$
		Applying Lemma \ref{resolvent estimate} to \eqref{eq1 in proof of prop 3.1}, we obtain
		\beq\bali\nonumber
		\|\nabla v\|_{L^2} \leq& C \left\| v \left( \nabla \cdot \left( \f{\nabla u_s}{\theta_s} \right) -\left( \f{u'}{\theta} \right)' \right) \right\|_{H^{\f{1 + \tau}{2}}} 
		\leq C\| \nabla v \|_{L^2} 
		\left\| \nabla \cdot \left( \f{\nabla u_s}{\theta_s} \right) -\left( \f{u'}{\theta} \right)'  \right\|_{H^{\f{1}{2} + \tau}} \\
		\leq& C\| \nabla v \|_{L^2} \left( \| u_s- u(y) \|_{H^{\f{5}{2}+\tau}} + \| \theta_s - \theta(y) \|_{H^{\f{3}{2}+\tau}} \right)\\
		\leq& C \epsilon_0 \| \nabla v \|_{L^2}.
		\eali\eeq
		Let $\epsilon_0$ be sufficiently small. We conclude that $\|\nabla v\|_{L^2} = 0$, and therefore $ v \equiv 0$.
	\end{proof}

	\appendix
	
	\section{Appendix}
	
	The operator $\CH_{u,\theta}$ leads us to the distorted Rayleigh equation
	\beq\label{distorted Rayleigh equation}
	- \theta \pa_y \left(\f{1}{\theta} \pa_y \Psi\right) + \f{\theta}{u - c} \left(\f{u'}{\theta}\right)' \Psi = \lambda \Psi,
	\eeq
	with the Dirichlet boundary condition $\Psi(-1) = \Psi(1) = 0$.
	
	\begin{prop}[\cite{zhao2025inviscid}(Proposition 3.2)]\label{solution of phi = u phi_1}
		Assume that $u'(y), \theta(y) \geq c_0 > 0$. Let $c = c_r + i c_i$, and let $y'$ be the critical point such that $u(y') = c_r$. For any $\lambda \in \BR$, the distorted Rayleigh equation \eqref{distorted Rayleigh equation} has a regular solution
		\beq\nonumber
		\phi(y) = \left(u(y) - c\right) \phi_1(y),
		\eeq
		where $\phi_1$ satisfies the integral equation
		\beq\nonumber
		\phi_1(y) = 1 - \lambda \int_{y'}^y \f{\theta(z')}{\left(u(z') - c\right)^2} \int_{y'}^{z'} \f{\left(u(z'') - c\right)^2}{\theta(z'')} \phi_1(z'') \dz''\dz'.
		\eeq
		
		Moreover, for $c \in \BR$ and $\lambda <0$, there hold
		\begin{align}\nonumber
			\phi_1(y) \geq 1, \quad (y - y') \pa_y \phi_1(y) \geq 0,
		\end{align}
		\begin{align}\nonumber
			C^{-1} \min\{-\lambda|y-y'|,\sqrt{-\lambda}\} \leq \f{|\pa_y \phi_1(y)|}{\phi_1(y)} \leq C \min\{-\lambda|y-y'|,\sqrt{-\lambda}\},
		\end{align}
		where the constant C depends on $\|\f{1+\theta^2}{\theta} + \f{1+(u')^2}{u'}\|_{L^\infty(-1,1)} $.
	\end{prop}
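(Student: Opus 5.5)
The statement to prove is the Appendix proposition (cited from \cite{zhao2025inviscid}): the existence of the regular solution $\phi=(u-c)\phi_1$ of \eqref{distorted Rayleigh equation}, and the bounds on $\phi_1$ when $c\in\BR$ and $\lambda<0$.

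\textbf{Reduction to an integral equation.} Substitute $\phi=(u-c)\phi_1$ into \eqref{distorted Rayleigh equation}. Writing the potential as $\f{\theta}{u-c}(\f{u'}{\theta})'$, a direct computation shows the equation is equivalent to
\[
\f{d}{dy}\Big(\f{(u-c)^2}{\theta}\phi_1'\Big)=-\lambda\f{(u-c)^2}{\theta}\phi_1 .
\]
The key point is that the terms containing $(u'/\theta)'$ cancel exactly. Integrating twice from the critical point $y'$ with $\phi_1(y')=1$ gives the stated Volterra-type integral equation.

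\textbf{Solvability.} Define the kernel
\[
k(y,z'')=\int_{z''}^{y}\f{\theta(z')}{(u(z')-c)^2}\,dz'\cdot\f{(u(z'')-c)^2}{\theta(z'')}.
\]
Since $u'\ge c_0$, we have $|u(z'')-c|\le|u(z')-c|$ whenever $z''$ lies between $y'$ and $z'$ (for complex $c$, use $|u-c|^2=(u-c_r)^2+c_i^2$). Hence $|k|\le C|y-z''|$, and the kernel is bounded and nonsingular. The standard Picard iteration then converges, with the $n$-th term bounded by $(C|\lambda||y-y'|^2)^n/(2n)!$. This yields a unique continuous solution $\phi_1$, which is $C^2$ away from $y'$ and regular at $y'$.

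\textbf{Sign properties for real $c$ and $\lambda<0$.} With $-\lambda>0$ the kernel is nonnegative, so every Picard iterate is $\ge 1$, giving $\phi_1\ge1$. Then
\[
\f{(u-c)^2}{\theta}\phi_1'(y)=-\lambda\int_{y'}^{y}\f{(u-c)^2}{\theta}\phi_1,
\]
whose sign is that of $y-y'$. This proves $(y-y')\phi_1'\ge0$.

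\textbf{Two-sided bound on $\phi_1'/\phi_1$.} This is the main obstacle. For $|y-y'|\le(-\lambda)^{-1/2}$, monotonicity gives $\phi_1\sim1$ there. Using $(u-c)^2\sim(z-y')^2$, the identity above yields $\phi_1'\sim-\lambda|y-y'|$.

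For larger $|y-y'|$, set $w=\phi_1'/\phi_1$. It satisfies the Riccati equation
\[
w'+w^2+\f{2u'}{u-c}w-\f{\theta'}{\theta}w=-\lambda .
\]
To get the upper bound, use that $\phi_1$ is increasing away from $y'$:
\[
\phi_1'(y)\le-\lambda\,\phi_1(y)\,\f{\theta(y)}{(u(y)-c)^2}\int_{y'}^{y}\f{(u-c)^2}{\theta}\lesssim-\lambda|y-y'|\phi_1 .
\]
This settles the regime $|y-y'|\lesssim(-\lambda)^{-1/2}$. For the $\sqrt{-\lambda}$ cap, argue with the Riccati equation: if $w>A\sqrt{-\lambda}$ with $A$ large, then $w^2$ dominates $-\lambda$ and the bounded first-order terms. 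Here $|2u'/(u-c)|\lesssim\sqrt{-\lambda}$ once $|y-y'|\gtrsim(-\lambda)^{-1/2}$, so $w'<0$ and $w$ cannot exceed the threshold it entered at.

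For the lower bound, the same Riccati equation shows that if $w<a\sqrt{-\lambda}$ with $a$ small, then $w'\ge-\lambda/2>0$, so $w$ is pushed back up. This is combined with $w\gtrsim-\lambda|y-y'|$ at the transition point $|y-y'|\sim(-\lambda)^{-1/2}$. The constants depend only on the bounds for $\theta,1/\theta,u',1/u'$ and $\theta'$. Since the stated constant involves only $\|\f{1+\theta^2}{\theta}+\f{1+(u')^2}{u'}\|_{L^\infty}$, I would handle the $\theta'/\theta$ term by comparison in the divergence form rather than the expanded Riccati equation.
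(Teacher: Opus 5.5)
The paper does not prove this proposition; it imports it verbatim from \cite{zhao2025inviscid}. So there is no in-paper argument to compare with, and your proposal has to be judged on its own.

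Its core is correct:
\begin{itemize}
\item the cancellation of the $(u'/\theta)'$ terms, giving $\frac{d}{dy}\big(\frac{(u-c)^2}{\theta}\phi_1'\big)=-\lambda\frac{(u-c)^2}{\theta}\phi_1$;
\item the kernel bound from monotonicity of $u$, valid also for complex $c$, where the Volterra equation simply encodes $\phi_1(y')=1$, $\phi_1'(y')=0$;
\item positivity of the Picard iterates and the sign of $\phi_1'$;
\item the bound $\phi_1'/\phi_1\le-\lambda\frac{\max\theta}{\min\theta}|y-y'|$, which follows from $\phi_1(z)\le\phi_1(y)$ and $|u(z)-c|\le|u(y)-c|$.
\end{itemize}

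The step that does not deliver the statement as written is your use of the expanded Riccati equation for the $\sqrt{-\lambda}$ cap and for the lower bound. The term $\frac{\theta'}{\theta}w$ is dominated by $w^2$ or by $-\lambda$ only when $\sqrt{-\lambda}\gg\|\theta'/\theta\|_{L^\infty}$. You can partly rescue this by observing that the far regime $|y-y'|\ge(-\lambda)^{-1/2}$ is empty in $(-1,1)$ when $-\lambda<1/4$. But your constants then depend on $\|\theta'\|_{L^\infty}$, which the stated dependence on $\|\frac{1+\theta^2}{\theta}+\frac{1+(u')^2}{u'}\|_{L^\infty}$ does not allow. (For this paper's application, where $\theta\in C^3$, that would be harmless.)

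Your suggested remedy works; here it is concretely. Set $a=(u-c)^2/\theta$ and $V=a\phi_1'/\phi_1=-\lambda\,\phi_1(y)^{-1}\int_{y'}^y a\phi_1$. Then
\[
V'=-\lambda a-\frac{V^2}{a},
\]
with no derivative of $\theta$.
\begin{itemize}
\item For $y>y'$ and $r=y-y'$, you have $c_ar^2\le a\le C_ar^2$, with $c_a,C_a$ depending only on bounds for $\theta^{\pm1}$ and $(u')^{\pm1}$.
\item Near $r=0$, $V\le-\lambda C_a r^3/3<B\sqrt{-\lambda}\,r^2$.
\item At a first touching point with $B\sqrt{-\lambda}\,r^2$ you would need $V'\ge 2B\sqrt{-\lambda}\,r>0$. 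But $V'\le-\lambda r^2(C_a-B^2/C_a)<0$ once $B>C_a$, a contradiction.
\item Hence $\phi_1'/\phi_1=V/a\le (B/c_a)\sqrt{-\lambda}$.
\end{itemize}

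The lower bound then needs no Riccati argument at all. Put $h=\min\{r,(-\lambda)^{-1/2}\}$. The upper bound gives $\phi_1(z)\ge e^{-C}\phi_1(y)$ on $[y-h,y]$. Insert this into
\[
\frac{\phi_1'}{\phi_1}=-\lambda\,\frac{\int_{y'}^ya\phi_1}{a(y)\phi_1(y)}
\]
and use $\int_{y-h}^y(z-y')^2\,dz\ge hr^2/3$. This yields $\phi_1'/\phi_1\ge c\,(-\lambda)h$, again with constants depending only on bounds for $\theta^{\pm1}$ and $(u')^{\pm1}$. The case $y<y'$ is symmetric.
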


	\bibliographystyle{is-abbrv}
	\bibliography{reference}
	
\end{document}